\documentclass[12pt]{article}
\usepackage{style}
\usepackage{booktabs}

\usepackage{array}

\DeclareMathOperator{\Ver}{Ver}
\title{\MakeUppercase{Fixed point Floer cohomology of Dehn twists I: splitting formulas}}
\date{\today}

\author{Maxim Jeffs, Yuan Yao, and Ziwen Zhao}

\begin{document}
\maketitle

\section*{Abstract}

This paper is the first in a series following on our earlier work \cite{yao-zhao,JYZ} studying the pair-of-pants product on fixed point Floer cohomology. In \cite{yao-zhao,JYZ} we fully computed this product for Dehn twists on surfaces of genus $g\geq 2$, and used it to compute a version of the (small) `quantum cohomology' for nodal curves. In the present work, we develop tools for computing the fixed point Floer cohomology and the associated product in the case of Dehn twists in all higher dimensions: for iterated Dehn twists around a Lagrangian sphere in a Liouville domain, we show that the product and differential on the fixed point Floer cohomology split into local and Morse-theoretic contributions on the level of cochains, using some new confinement results for $J$-holomorphic curves. The local contributions are expected to recover a finite sector of the homology of (twisted) loop spaces of $S^n$ along with an associated Chas-Sullivan product, which we will examine in detail in future work. We also discuss some immediate applications and curiosities for future work.

\tableofcontents

\section{Introduction}
Fixed point Floer (co)homology (also known as symplectic Floer (co)homology) is a Floer theory associated to a symplectomorphism. It has been a key tool in symplectic geometry since it was first introduced by Andreas Floer in 1988 \cite{Floer}, and has since become an object of study in its own right. Though originally introduced to prove the Arnol'd conjecture for Hamiltonian symplectomorphisms (in this setting it is also referred to as Hamiltonian Floer (co)homology), its extension to all symplectomorphisms has featured prominent applications such as distinguishing exotic symplectomorphisms that are smoothly isotopic but not symplectically isotopic to the identity \cite{seidel_thesis}.

Fixed point Floer cohomology has been computed for all surface symplectomorphisms \cite{dostoglou1994self,pozniak1994floer,seidel1996symplectic,gautschi2002floer,eftekhary2002floer,cotton2009symplectic}, and there are a few computations in higher dimensions \cite{seidel_thesis, Pedrotti, poza2025, Mclean_log}\footnote{Many of these constructions work by finding clever geometric settings where there are no/few $J$-holomorphic curves one needs to consider.}.
Although it was known early on that these fixed point Floer cohomology groups admitted a `pair of pants product' (including equivariant versions \cite{equivariant_pants}), there were very few understood examples of this product structure beyond the setting of Hamiltonian symplectomorphisms; and it was only recently that the additional information contained in these products has been computed and exploited \cite{yao-zhao,JYZ}.  

In \cite{yao-zhao,JYZ}, we showed that if $\phi$ is a Dehn twist on a genus $g\geq 2$ surface $\Sigma_g$, then 

\begin{theorem}
Under the pair of pants product there is a ($\ZZ_2$-graded) ring structure on the direct sum $\bigoplus_{d=0}^{\infty} HF^*(\Sigma_g,\phi^d)$, and we have isomorphisms of $\CC$-algebras:
\[
\bigoplus_{d=0}^{\infty} HF^0(\Sigma_g,\phi^d) \cong (\CC[X,Y,Z]/(XYZ-Y^3-Z^2) ) \oplus \mathbb{C}
\]
where $X,Y,Z$ all act trivially on the second $\mathbb{C}$ factor.
The degree-$1$ part is a module over the above ring, and we have a module isomorphism:
\[
 \bigoplus_{d=0}^\infty HF^1(\Sigma_g,\phi^d) \cong 
\mathbb{C}[X,Y,Z]/(XYZ-Y^3-Z^2) \oplus (\mathbb{C}[X])^{2g-2} \oplus \mathbb{C}.
 \]
where $HF^0(\Sigma_g,\phi^d)$ acts on the first factor by multiplication,  on the second factor by projection to $\CC[X]$ followed by multiplication, and on the last factor by projection to $\CC$ followed by multiplication.
\end{theorem}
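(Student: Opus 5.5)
We first compute each group $HF^*(\Sigma_g,\phi^d)$ additively, then the products among them, and finally match the result with the stated presentation. Throughout, $\phi$ is a Dehn twist along a single nonseparating (or separating) curve $\gamma\subset\Sigma_g$, $g\ge 2$.

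\textbf{Step 1: the additive computation.} Choose a monotone/flux-zero representative of $\phi^d$. It is the identity away from a collar $N\cong S^1\times[-1,1]$ of $\gamma$, and a $d$-fold twist $(\theta,t)\mapsto(\theta+d f(t),t)$ on $N$. Perturb by a small Morse function $H$ on $\Sigma_g\setminus N$ that is adapted to the boundary circles. The fixed points then fall into two families.

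The first family consists of the critical points of $H$ on the complement. These contribute a copy of $H^*(\Sigma_g\setminus\gamma)$ after accounting for the boundary behaviour.

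The second family consists of circles of fixed points inside $N$, namely the levels $f(t)\in\frac1d\mathbb{Z}$. There are $d-1$ interior circles. After a Morse–Bott perturbation each circle yields a pair of generators, one in degree $0$ and one in degree $1$.

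Standard energy and action arguments confine the differential. Floer cylinders cannot cross between circles of different rotation number, since the action and winding are distinct, so the differential reduces to the Morse differential. For the Dehn twist case this is the computation of Seidel and Gautschi. It gives $\dim HF^0(\phi^d)=d+1$ (for $d\ge1$) and $\dim HF^1=d+1+(2g-2)$, up to the conventions about the $\mathbb{C}$ factor for $d=0$. Comparing with the Hilbert series of $\mathbb{C}[X,Y,Z]/(XYZ-Y^3-Z^2)$ fixes the gradings: we take $X$ in weight $1$, $Y$ in weight $2$, and $Z$ in weight $3$. The relation is then homogeneous of weight $6$, and the Hilbert function of the quotient, which is free over $\mathbb{C}[X]$ with basis $1,Y,Z,Y^2,YZ,\dots$, grows like $d+1$ in weight $d$. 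This is consistent with the count above.

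\textbf{Step 2: the products.} The pair of pants product $HF(\phi^a)\otimes HF(\phi^b)\to HF(\phi^{a+b})$ is computed by splitting it into two kinds of contribution.

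The first is a Morse-theoretic part on $\Sigma_g\setminus N$. This is the cup product, and it explains the $\mathbb{C}[X]$-module factor $(\mathbb{C}[X])^{2g-2}$, where $X$ is the unit-type class (the fundamental class on the complement) propagating in $d$. It also explains the extra $\mathbb{C}$, namely the $d=0$ classes killed by $X,Y,Z$.

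The second is a local part in the annulus $N$, where the twist is integrable. In action–angle coordinates, pants with asymptotics on the circles at levels $k/a$ and $l/b$ land on the circle at level $(k+l)/(a+b)$ only when the rotation numbers add correctly. We count these by passing to the universal cover $\mathbb{R}\times[-1,1]$ and using explicit holomorphic sections, as in the computation of the Chas–Sullivan-type product. This local computation produces the generators $Y\in HF^0(\phi^2)$ and $Z\in HF^0(\phi^3)$. Multiplying them gives $Y^3$ and $Z^2$, which are both sums of circle classes in $HF^0(\phi^6)$, and comparing their expansions with $XYZ$ gives the single relation.

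\textbf{Step 3: assembly.} Verify that $X,Y,Z$ generate, by checking surjectivity weight by weight using the dimension count from Step 1. Then verify that there are no further relations, again by comparing Hilbert functions. The $HF^1$ module structure follows by the same counts: the degree-$1$ circle classes form a free module isomorphic to the ring, and the complement classes form the $\mathbb{C}[X]$-summands.

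\textbf{Main obstacle.} The hardest step is the confinement argument in Step 2. We must show that pants curves with local asymptotics do not escape the collar and interact with the Morse part, and that the local counts are exactly the explicit sections with the correct signs. This requires an action/winding-number argument combined with a maximum principle at the collar boundary. Pinning down the precise coefficient in $XYZ-Y^3-Z^2$, rather than some other cubic, is delicate: it comes down to a finite count of sections, and our first attempt at this count would use the lattice-point/tropical description.
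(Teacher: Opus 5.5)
The paper only quotes this theorem from \cite{yao-zhao,JYZ}. There the method is broadly the one you outline: confine curves so that the differential and product split into a Morse part on $\Sigma_g\setminus\gamma$ coupled to a finite sector of $SH^*(T^*S^1)$, then enumerate curves in the twist region. Your numerical bookkeeping, however, is wrong, and Step 3 rests on it.

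The dimensions you state, $d+1$ and $d+2g-1$, are exactly the count for a \emph{separating} $\gamma$, where $H^0(\Sigma_g\setminus\gamma)$ has rank $2$ and $H^1$ has rank $2g$. For such $\gamma$ the statement is false. Already $\dim HF^0(\phi)=2$, while $R=\mathbb{C}[X,Y,Z]/(XYZ-Y^3-Z^2)$ is $1$-dimensional in weight $1$; the extra $\mathbb{C}$ lives in weight $0$, matching $HF^0(\phi^0)=H^0\oplus H^2$. So the theorem is about nonseparating $\gamma$, and for it your own Step 1 gives $\dim HF^0(\phi^d)=1+(d-1)=d$ and $\dim HF^1(\phi^d)=(2g-1)+(d-1)=d+2g-2$.

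Likewise, with weights $(1,2,3)$ the Hilbert series of $R$ is $(1-t^6)/((1-t)(1-t^2)(1-t^3))=(1-t+t^2)/(1-t)^2$, so $\dim R_d=d$ for $d\ge 1$. Your $\mathbb{C}[X]$-basis $Y^i, Y^iZ$ has weights $0,2,3,4,5,\dots$ and skips $1$; it does not grow like $d+1$. The ``consistency'' you report is two off-by-one errors cancelling. With the numbers as you state them, $HF^0(\phi^d)$ would be strictly larger than $R_d$, and the weight-by-weight generation and no-further-relations argument fails.

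The $HF^1$ assembly has the same problem. There are only $d-1$ degree-one circle classes in weight $d$, and none in weights $0$ and $1$, so they cannot form a copy of $R$. The $R$-summand must be generated by a weight-zero class in $HF^1(\phi^0)=H^1(\Sigma_g)$.

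More seriously, even with correct dimensions the Hilbert function cannot detect the relation. The rings $\mathbb{C}[X,Y,Z]/(Y^3-Z^2)$ (cuspidal) and $\mathbb{C}[X,Y,Z]/(Y^3+Z^2-X^6)$ (smooth) have the same Hilbert series as $R$. So all the content is in the product computation you defer, and it is not just a matter of fixing a coefficient in products of circle classes.

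On the normalization $\mathbb{P}^1$ of the nodal cubic, $R_d$ is the space of degree-$d$ binary forms in $u,v$ whose $u^d$ and $v^d$ coefficients agree. The choice $X=u+v$, $Y=uv$, $Z=u^2v$ realizes the presentation, since $XYZ=u^4v^2+u^3v^3=Z^2+Y^3$. So if the circle classes correspond to the interior monomials $u^lv^{d-l}$, multiplication by $X$ must be the two-term map $u^lv^{d-l}\mapsto u^{l+1}v^{d-l}+u^lv^{d-l+1}$.

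Geometrically, the Morse class $X$ sees \emph{both} ends of the annulus. Its cocycle contains a minimum on each boundary circle $\mu=1$, and these circles sit at the two ends of the winding range of the twist sector. The pants with one end on these orbits and the others on circle orbits are what produce the nodal term $XYZ$.

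Your description of $X$ as a class that merely ``propagates in $d$'', together with your focus on products of pure circle classes, omits exactly the computation that distinguishes the nodal relation from the cuspidal or smooth one. That count is the missing step. In the cited work it is a cochain-level enumeration in the twist region, including the ends on the $\mu=1$ locus.
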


There are infinitely many interesting $J$-holomorphic curves that contribute to this rich algebraic structure, which we managed to understand and enumerate in \cite{yao-zhao,JYZ}. Furthermore, in \cite{JYZ} we used these computations to compute direct limits of fixed point Floer cohomology:

\begin{theorem} After taking a direct limit along multiplication by the Seidel class $S \in HF^0(\phi)$ (see \cite[\S 4.2]{JYZ}), we have an isomorphism of algebras:
\[\lim_{d\rightarrow \infty} HF^0(\phi^d) \cong \mathbb{C}[Y,Z]/(YZ-Y^3-Z^2)\]
and as a module over the above, we have an isomorphism:
\[
\lim_{d\rightarrow \infty} HF^1(\phi^d)\cong \mathbb{C}[Y,Z]/(YZ-Y^3-Z^2)\oplus \mathbb{C}^{2g-2}
\]
where the direct limit of $HF^0$ acts the first factor by multiplication, and on the second factor by projection to $\CC$ followed by diagonal multiplication.
\end{theorem}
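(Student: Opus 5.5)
The plan is to deduce the direct-limit statement algebraically from the preceding theorem, which describes the full graded ring $\bigoplus_d HF^*(\Sigma_g,\phi^d)$. The first step is to identify the Seidel class $S\in HF^0(\phi)$ with an element of degree $d=1$ in the presentation $\CC[X,Y,Z]/(XYZ-Y^3-Z^2)\oplus\CC$. The natural candidate is $S=X$; here I assume $X$ sits in $d=1$, $Y$ in $d=2$, $Z$ in $d=3$, so that the relation $XYZ-Y^3-Z^2$ is homogeneous of weight $6$. This identification would be taken from \cite[\S 4.2]{JYZ}, where $S$ is defined as the canonical generator counting the unique section, i.e.\ the continuation element. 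Granting $S=X$, the direct limit $\lim_d HF^0(\phi^d)$ along multiplication by $X$ is exactly the degree-zero part of the localization $R[X^{-1}]$, where $R=\bigoplus_d HF^0(\phi^d)$. The extra $\CC$ summand is killed by $X$, so it vanishes in the limit.

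The second step is the computation of $(R[X^{-1}])_0$ for the weighted-homogeneous ring $R=\CC[X,Y,Z]/(XYZ-Y^3-Z^2)$. Its degree-zero part is generated by $y=Y/X^2$ and $z=Z/X^3$. Dividing the relation by $X^6$ gives $yz-y^3-z^2=0$. The standard dehomogenization fact then gives $(R[X^{-1}])_0\cong R/(X-1)=\CC[Y,Z]/(YZ-Y^3-Z^2)$. The inverse map sends a class $f/X^{k}$ of total degree $0$ to $f(1,y,z)$, and a homogeneous element of $R$ that becomes zero after setting $X=1$ is a multiple of the relation, so the map is injective. I should also check that the transition maps of the system, multiplication by $X$ from $HF(\phi^d)$ to $HF(\phi^{d+1})$, are the ones used in $R$. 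This is immediate, since the limit is defined via the product.

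The third step handles $HF^1$, which is the module $M\oplus\CC[X]^{2g-2}\oplus\CC$ with $M=R$. Localizing each summand at $X$ and taking degree-zero parts, $M$ gives $\CC[Y,Z]/(YZ-Y^3-Z^2)$ as before. Each $\CC[X]$ gives $\CC[X^{\pm1}]_0=\CC$, on which $y$ and $z$ act by zero, because the action factors through projection to $\CC[X]$ and so kills $Y$ and $Z$. The final $\CC$ dies under $X$. The result is the stated decomposition, and the action of the limit ring on the $\CC^{2g-2}$ summand is through the augmentation $y,z\mapsto 0$, i.e.\ through projection to $\CC$ followed by diagonal multiplication.

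The main obstacle is the first step: pinning down that $S$ corresponds exactly to $X$, or at least to an element whose localization gives the same degree-zero ring. If $S$ were instead $X+(\text{lower terms})$, there are no other elements of $d=1$ apart from multiples of $X$ and possibly the extra $\CC$ summand, so $S=aX+b\epsilon$. One must then verify $a\neq0$ by a chain-level count of the section class, using the degree-one computation from \cite{yao-zhao}. The $\epsilon$ part is harmless, since it is killed by $X$ and annihilates everything else.
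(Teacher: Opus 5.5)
Your argument is correct and follows essentially the intended route. This paper gives no proof of its own and quotes the result from \cite{JYZ}, where the direct limit is deduced from the product computation of the preceding theorem; your localization at the degree-one class, $(R[X^{-1}])_0\cong R/(X-1)=\mathbb{C}[Y,Z]/(YZ-Y^3-Z^2)$, together with your treatment of the extra summands and the $\mathbb{C}[X]^{2g-2}$ factors, is that deduction. The only geometric input you need is $S\neq 0$, and your hedge about a component $b\epsilon$ is unnecessary: matching $\dim R_d=d$ against the rank of $HF^0(\phi^d)$ for $d\ge 1$ places the extra $\mathbb{C}$ summands in $d=0$, so $HF^0(\phi)=\mathbb{C}X$.
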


We showed in \cite{JYZ} that the above algebras could be considered as a version of (small) `quantum cohomology' for singular curves by explicitly verifying an instance of closed-string mirror symmetry. Our method provides a way of defining `genus zero invariants' of some singular varieties when na\"ive enumerative methods would produce trivial answers (as in the case of curves of genus $g \geq 2$). 

This paper is the next part in our continuing program of understanding the product structure on fixed point Floer cohomology. In this paper, we make the important first step towards computing the full algebra structure on $\bigoplus_{d=0}^{\infty} HF^{\ast}(M^{2n},\phi^d)$ where $M$ is a Liouville domain, and $\phi$ is a Dehn twist around a Lagrangian sphere.

The key result used in our computations in \cite{JYZ} was a non-crossing result for $J$-holomorphic curves from \cite{yao-zhao} that split the calculation of both the differential and the pair-of-pants product into a Morse-theoretic contribution away from the support of the Dehn twist, as well as a local calculation in the Dehn twist region, which can be identified with a finite sector of the symplectic cohomology of $T^{\ast} S^1$.  In other words, by confining the $J$-holomorphic curves, we see that the chain complex of fixed point Floer cohomology is built out of the cochain complex for Morse cohomology, coupled non-trivially with a finite sector of the cochain complex for symplectic cohomology of $T^*S^1$, with the product structure respecting this decomposition on the cochain level. The final computation falls out of carefully enumerating curves in the twist region (using our understanding of the symplectic cohomology of $T^*S^1$ on the cochain level).

In this paper, we make the crucial first step in carrying out the above computations for Dehn twists in higher dimensions: in all dimensions, we establish the analogous confinement result for $J$-holomorphic curves for all iterations of Dehn twists around Lagrangian spheres in Liouville domains. We show the fixed point Floer cohomology is built out of the Morse cochain complex of the complement of the Lagrangian sphere, coupled with a finite sector of the fixed point cochain complex on $T^*S^n$ (which we call the `wrapped Dehn twist')\footnote{Unlike earlier computations in higher dimensions, there are many interesting $J$-holomorphic curves in this sector, for both the product and the differential.}, and the product respects this decomposition of cochain complexes. The confinement methods for $J$-holomorphic curves draw on several different tools: for an exposition see \S \ref{sec:techniques}.

In the next phase of our project (currently in progress), we will carry out careful cochain-level computations of $J$-holomorphic curves that appear in the Dehn twist region. This will involve leveraging and further developing existing tools (due to Diogo and Lisi \cite{Lisi_Diogo_split,Lisi_Diogo_complement}) that give explicit enumerations of $J$-holomorphic curves that appear in Floer theory.

However, already with just the confinement of holomorphic curves understood, we can read off the fixed point Floer cohomology groups of all even iterations of Dehn twists in dimension 4 (with $\ZZ_2$ coefficients), which was not known. Inspecting the rank of this group reveals very interesting information: we point to several applications, conjectures, and possible directions for future work.

\subsection{Main theorems}

Here we introduce the main theorems of this paper. For greater readability, we do this with minimal setup: the more precise statements of these theorems will be found in the body of the paper.

Let $\phi$ be a model Dehn twist around a Lagrangian sphere in a Liouville domain $(M^{2n},d\theta_M)$ for $n \geq 2$, and let $Y_{\phi^d}$ denote the mapping torus equipped with the standard stable Hamiltonian structure. We can decompose $Y_{\phi^d}$ into two regions, referred to as the \textit{Dehn twist region}  $Y_{tw}$ and the \textit{Morse region} $Y_{Mo}$. Here $Y_{tw}$ contains the support of the Dehn twist $\phi^d$ (before we perturb $\phi$ to be nondegenerate), and $Y_{Mo}$ is the complement. 

We arrange these two regions to overlap in a small intersection region that is diffeomorphic to $[1-\epsilon,1+\epsilon]\times S^*S^n$ (the unit cosphere bundle of $S^n$) for some $\epsilon>0$. We will call this region the \textit{intersection region}. After perturbing $\phi$ to be nondegenerate, there are generators (which belong to both Dehn twist and Morse regions) in the intersection region. The generators in the Dehn twist (resp. Morse) but not in the intersection region will be referred to as generators in the \textit{strict Dehn twist region} (resp. \textit{strict Morse region}).

Our first main theorem is the following:
\begin{theorem}[Theorem \ref{thm:confinement_differential}]
After suitably perturbing $\phi^d$ to be nondegenerate, we can find a large class\footnote{In this paper the conditions we impose on $J$ are near the intersection between the Dehn twist region and the Morse region; we are free to choose $J$ arbitrarily away from these regions. By saying there exists a `large class' of such $J$s, we mean that we can satisfy all necessary transversality conditions we need within this class. In this paper we will use the term `regular' as a condition on $J$ to mean that all the $J$-holomorphic curves relevant to us are transversely cut out.} of almost complex structures on $\mathbb{R} \times Y_{\phi^d}$ and associated Floer data so that confinement of $J$-holomorphic sections is achieved: 
\begin{enumerate}
\item If a $J$-holomorphic section has both punctures asymptotic to orbits in the Morse region, then it is entirely contained in the Morse region;
\item If a $J$-holomorphic section has both punctures asymptotic to orbits in the Dehn twist region, it is entirely contained in the Dehn twist region;
\item If a $J$-holomorphic section has a puncture asymptotic to an orbit in the strict Morse region, then it is entirely contained in the Morse region. If a $J$-holomorphic section has a puncture asymptotic to an orbit in the strict Dehn twist region, then it is entirely contained in the Dehn twist region.

\end{enumerate}

\end{theorem}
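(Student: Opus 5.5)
We will argue by placing a hypersurface in the intersection region $[1-\epsilon,1+\epsilon]\times S^*S^n$ and showing that $J$-holomorphic sections cannot cross it in the forbidden direction.

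\textbf{Model near the intersection region.} We first arrange the geometry there. Write the intersection region as a collar with radial coordinate $r\in[1-\epsilon,1+\epsilon]$ on $T^*S^n$. Near $r=1$ the unperturbed $\phi^d$ is the time-$d$ map of a Hamiltonian $H(r)$ that depends only on $r$ and is locally linear in $r$; its flow is a reparametrized normalized geodesic flow. We choose the nondegenerate perturbation so that, in the intersection region, $\phi^d$ is the flow of $H(r)+\delta f$. Here $f$ is a small Morse function on $S^*S^n$, or a Morse--Bott perturbation of the closed geodesic families at the slopes we meet. The generators in the intersection region are then the critical points of $f$ on a single slice $\{r=1\}$. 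On $\mathbb{R}\times Y_{\phi^d}$ restricted to this collar, we take $J$ cylindrical: $J$ is invariant in the $\mathbb{R}$ and mapping-torus directions, and it sends $\partial_r$ to the Reeb/Hamiltonian direction in the standard SFT-type form for the contact form $\alpha=\lambda|_{S^*S^n}$. Away from the collar, $J$ is arbitrary and generic, which gives the ``large class''.

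\textbf{Confinement via maximum principle and intersection positivity.} With $J$ of this form, the function $\rho = r\circ u$ on $u^{-1}(\text{collar})$ satisfies a maximum principle in both directions. This follows the argument of \cite{yao-zhao} for $T^*S^1$, and also the integrated maximum principle of Abouzaid--Seidel.
\begin{itemize}
\item If both ends of $u$ lie in $\{r\le 1\}$ (Dehn twist side) or both in $\{r\ge 1\}$ (Morse side), then $\rho$ cannot have an interior extremum beyond the asymptotic values.
\item Equivalently, an energy/action computation applies. Cut $u$ along $\rho^{-1}(1\pm\epsilon')$ and apply Stokes' theorem to the piece outside. The boundary term has the wrong sign, because the slope of $H$ is constant there and the twisted one-form $\lambda - H\,dt$ restricts correctly.
\end{itemize}
This yields items (1) and (2). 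To handle the level sets cleanly, we will also use that the hypersurfaces $\mathbb{R}\times\{r=c\}\times S^1$ are foliated by $J$-holomorphic cylinders (trivial cylinders over Reeb orbits for slope-rational $c$). In dimension four one could also invoke positivity of intersections, but in general we rely on the maximum principle.

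\textbf{Item (3) and the main obstacle.} For (3), suppose one end is in the strict Morse region and the other is in the intersection region. Item (1) does not apply directly, since the intersection generator belongs to both regions. Here we use the Morse--Bott/perturbation structure: near $r=1$ the asymptotics of $u$ at the intersection orbit approach $r=1$ from one side only. An action/index estimate, namely the energy identity
\[E(u)=\mathcal{A}(x_-)-\mathcal{A}(x_+)\]
with the action of intersection orbits differing by $O(\delta)$, shows that the curve cannot enter $r<1-\epsilon'$. Doing so would cost energy bounded below by a constant independent of $\delta$, via a monotonicity lemma in the collar. The symmetric argument handles the strict Dehn twist end. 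This is the step we expect to be hardest, for two reasons:
\begin{itemize}
\item It requires uniform-in-$\delta$ control, as $\delta\to 0$, of curves that partially wrap around the intersection orbits.
\item It requires ruling out curves that leave and return to the collar.
\end{itemize}
We would first try a neck-stretching/SFT compactness argument along $r=1$. Under stretching, a hypothetical crossing curve breaks into a building with a component in the symplectization of $S^*S^n$ having positive $d\alpha$-energy. The Conley--Zehnder indices of the geodesic orbits ($n\ge 2$) then make such a building too high in index to survive in the relevant dimension-zero or dimension-one moduli spaces. Regularity for generic $J$ in the chosen class then holds by standard somewhere-injectivity, since sections are automatically simple.
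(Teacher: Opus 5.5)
\textbf{Gap in item (1).} The maximum principle you rely on is one-sided, in both its pointwise and its integrated (Abouzaid--Seidel) form. With $J$ cylindrical on the collar, $\mu\circ u$ cannot have an interior \emph{maximum}, so curves cannot push outward past their ends. This is exactly how the paper proves item (2): it cuts at $\mu=1+\epsilon$, where $\int_\Gamma d\alpha=0$, and the outer piece would then have energy $-(1+\epsilon)\int_\Gamma\lambda_{S^*S^n}<0$. But the Morse region is the \emph{outer} side, and nothing of this kind prevents a curve with both ends there from dipping into $\mu<1$. Cut such a curve at $\mu_0=1-\epsilon$. Again $\int_\Gamma d\alpha=0$, and Stokes gives the inner piece energy $\mu_0\int_\Gamma\lambda_{S^*S^n}$, which the local energy inequality shows is \emph{positive}. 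There is no contradiction, so the ``both directions'' claim, and its Stokes reformulation, fail.

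The paper closes this case with Hein's estimate. Fix $C$ so that on the shell $[1-1/C,1-1/C^2]$ the Hamiltonian $-kG_1$ is autonomous, has no $1$-periodic orbits, and is independent of $\epsilon,\epsilon_G$, and fix $J$ there as well. Then any curve crossing the shell has vertical energy at least a fixed $\epsilon_H>0$. On the other hand, vanishing curvature gives every curve between Morse-region orbits energy $\mathcal{A}(x)-\mathcal{A}(y)\le 10k\epsilon_G\epsilon^2$. Shrinking $\epsilon$ yields the contradiction. Your proposed ``monotonicity lemma in the collar'' cannot replace this. Monotonicity applied to the graph in $\mathbb{R}\times Y_{\phi^d}$ bounds total area, base term included, not vertical energy; trivial cylinders have zero vertical energy. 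A uniform lower bound must therefore come from the absence of orbits in a \emph{fixed} shell.

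\textbf{Item (3) targets the wrong case.} A curve from a strict Morse orbit to an intersection orbit has both ends in the Morse region, so item (1) already covers it. The new content of (3) is that there are \emph{no} sections between a strict Morse orbit and a strict Dehn twist orbit. The paper proves this with the same local energy inequality, now at a level where $\int_\Gamma d\alpha=\pm1$. For example, cutting at $\mu=1+\epsilon$ forces $\int_\Gamma\lambda_{S^*S^n}\ge k\epsilon_G\epsilon$. Nonnegativity of the outer energy then forces
\[f_1(y)\le -(1+\epsilon)\epsilon_G\epsilon+\frac{1}{2}\epsilon_G\epsilon^2<0,\]
contradicting $f_1\in[\frac{1}{2}\epsilon_G\epsilon^2,\frac{3}{2}\epsilon_G\epsilon^2]$. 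The cut at $\mu=1-\epsilon$ is symmetric. This argument needs the quadratic profile $\frac{1}{2}\epsilon_G(\mu-1)^2$, whose slope changes sign across $\mu=1$, together with control of the values of $f_1$. Your ``locally linear'' $H$ near $r=1$ and the neck-stretching/Conley--Zehnder plan supply neither. Note also that the energies involved here are of order $k\epsilon_G\epsilon^2$, not $O(\delta)$ as your energy estimate assumes.
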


Let $(CF^*(\phi^d), d)$ denote the fixed point Floer cochain complex for the iterated Dehn twist: recall that for fixed points $x$ and $y$, the  pairing $\langle dx, y \rangle$ is given by a count of $J$-holomorphic sections of $\RR \times Y_{\phi^d}$ connecting $x$ and $y$ (for our precise setup, see \S 2.4). The above result implies a particular splitting formula for the differential $d$. To state it precisely, consider the subcomplex $(CF^*_{tw}(\phi^d), d_{tw})$ which consists of generators in the twist region and where $d_{tw}$ counts $J$-holomorphic curves entirely contained in the twist region (this is a subcomplex by the above theorem); as well as $(CF^*_{Mo},d_M)$, which is a cochain complex defined analogously for the Morse region. We have a decomposition formula for $d$ in terms of the chain complexes above.

\[
\langle dx,y \rangle =\begin{cases}
\langle d_{tw} x, y\rangle \quad\text{if $x$ is in the Dehn twist region,}\\
\langle d_{Mo}x,y\rangle \quad\textup{if $x$ is in the Morse region}
\end{cases}
\]
We note that for $x$ and $y$ both in the intersection region, the two differentials agree
\[
\langle d_{twist} x, y\rangle = \langle d_{Mo}x,y\rangle
\]
and this count is given by a count of Morse flowlines in the intersection region, so it is explicitly computable. Hence if we write $d_{int}$ for the differential given by
\[
\langle d_{int}x,y \rangle = \langle d_{twist}x,y\rangle = \langle d_{Morse} x,y\rangle
\]
for $x,y$ in the intersection region, and zero otherwise, we can write
\[
d= d_{Morse}+d_{twist} -d_{int}
\]
In other words, the total cochain complex is given by the chain complex in the Morse region coupled nontrivially to the cochain complex in the Dehn twist region.

The differential in the Morse region is easily understood because it is counting Morse flow lines, but the subcomplex $CF^*_{tw}(\phi^d,d_{tw})$ is more interesting to understand. We realize it as a finite-energy subcomplex of another interesting Floer homology group on $T^*S^n$.

\begin{definition}
Take $\mu$ to be the length of the fiber coordinate on $T^*S^n$ induced by the round metric on $S^n$. 
Let $\phi$ be a (model) Dehn twist on $T^*S^n$ which is supported in $\mu\leq 1$: we define the wrapped Dehn twist $\phi^d_\infty$ as follows: consider a Hamiltonian $H$ supported away from the support of the Dehn twist, and equal to $\frac{1}{2}\mu^2$ away from the Dehn twist region. Let $\phi_{H}$ be the Hamiltonian flow of $H$. We then define the wrapped Dehn twist by $\phi^d_\infty:= \phi_H \circ \phi^d$ (we may perturb it slightly to make it nondegenerate).\footnote{In \cite{Uljarevic}, Uljarevic defines fixed point Floer homology of automorphisms of a Liouville domain using linear Hamiltonians at infinity. We expect our definition to be equivalent to theirs after a direct limit.}
\end{definition}

We observe that for even $d$ this is just the symplectic cohomology of $T^*S^n$, which we can identify with the homology of the free loop space of $S^n$ (see \cite{abouzaid_viterbo, viterbo2018,Ab_schwarz_1,Salamon_Weber,AbSchwarz}). The wrapped Dehn twists for odd $d$ are more interesting: we recall that the twisted loop space (with respect to the antipodal map) of $S^n$ is given by
\[
\mathcal{L}_{tw}(S^n):=\{ \text{continuous maps} \; \gamma:[0,1]\rightarrow S^n \; | \; \gamma(0)=-\gamma(1)\}.
\]
We pose this as a question for future work:
\begin{conjecture}(Twisted Viterbo isomorphism)\footnote{This conjecture was known to Yannis Bähni since at least 2023.}
Up to shifts, $HF(\phi^n_\infty)$ can be identified with $H_*(\mathcal{L}_{tw}(S^n)).$
\end{conjecture}

We observe that if we set up the Hamiltonian and almost complex structure appropriately, we see from the maximum principle that the chain complex $CF^*_{tw}(\phi^d,d_{tw})$ can be viewed as a subcomplex of $CH^*(\phi^d_\infty)$. In topological terms, we are claiming the fixed point Floer cohomology complex for Dehn twists is the coupling of a Morse cochain complex with another cochain complex that computes the homology of the loop space (resp. conjecturally the twisted loop space for odd $d$) up to a certain action level.

We next describe the confinement principle for the product structure on fixed point Floer cohomology. The product we are interested in maps
\[
HF^*(\phi^m)\otimes HF^*(\phi^n)\rightarrow HF^*(\phi^{m+n})
\]
and is defined by counting $J$-holomorphic sections of a symplectic fibration $E^{m,n}_
\delta\rightarrow B_3$, where $B_3$ is a sphere with three punctures: see Section \ref{sec:prod_str_Dehn_twists} for a careful construction.

We can decompose this fibration into two pieces, each with boundary: firstly, we have $E_{tw}^{m,n} \rightarrow B_3$ which is a nontrivial bundle where the Dehn twists are supported (called the \textit{Dehn twist region}), and secondly, $E^{m,n}_{Mo}\rightarrow B_3$ where the fibration is trivial (called the \textit{Morse region}): curves in this region are simply computing the cup product in Morse cohomology. The two regions overlap along a thin region diffeomorphic to
\[
(1-\epsilon, 1+\epsilon) \times S^*S^n \times B_3\rightarrow B_3
\]
for some $\epsilon>0$, which we shall also call the \textit{intersection region}.

Our confinement principle for $J$-holomorphic sections computing the product structure says that:

\begin{theorem}[Section \ref{sec:confinement_Morse_Bott}, Proposition \ref{prop:confinement_compatible_Morse}: Theorem \ref{thm:tame_to_compatible}]\label{thm:product_confinement}
We can find a large class of almost complex structures $J$ and associated Floer data on $E^{m,n}_\delta$ so that confinement is achieved: suppose $u$ is a $J$-holomorphic section of $E^{m,n}_{\delta}$, then:
\begin{enumerate}
\item If $u$ has a puncture asymptotic to an orbit in the strict Dehn twist region, then $u$ is contained in the Dehn twist region $E_{tw}^{m,n}$;
\item If $u$ has a puncture asymptotic to an orbit in the strict Morse region, then $u$ is contained in the Morse region $E^{m,n}_{Mo}$;
\item If all three punctures of $u$ are asymptotic to orbits in the intersection region, then $u$ must be contained in the intersection region $(1-\epsilon, 1+\epsilon) \times S^*S^n \times B_3$ as well.
\end{enumerate}

\end{theorem}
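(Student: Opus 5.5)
The plan is to reduce confinement for sections of $E^{m,n}_\delta \to B_3$ to an intersection-theoretic argument in the intersection region $(1-\epsilon,1+\epsilon)\times S^*S^n\times B_3$, in the spirit of the non-crossing result of \cite{yao-zhao}. There, with $r$ the radial (Liouville) coordinate, the fibration is trivial and the monodromies act on $S^*S^n$ by a reparametrized geodesic flow. We first choose Floer data so that, in this collar, the Hamiltonian perturbation is a function $h(r)$ with small slope and no $\theta$-dependence. We also take $J$ cylindrical in the collar: it preserves $\ker\alpha\cap\ker dr$ on $S^*S^n$ and sends $\partial_r$ to the Reeb field, up to the perturbation. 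With these choices each hypersurface $\Sigma_c=\{r=c\}\times B_3$, and more precisely its foliation by Reeb orbits over $B_3$, is controlled by $J$. The nondegenerate orbits in the intersection region are then Morse perturbations of Morse--Bott families of constant orbits at radii in $(1-\epsilon,1+\epsilon)$. Their actions are determined by $h$ and by the Morse function on $S^*S^n$.

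\textbf{Step 1 (maximum principle / no escape).} For a section $u$, consider the function $\rho=r\circ u$ on the part of $B_3$ mapped into the collar. Using the cylindrical form of $J$ and the convexity or concavity of $h$ chosen on the two halves of the collar, we show that $\rho$ satisfies a differential inequality of the form $\Delta\rho+\langle X,\nabla\rho\rangle\ge 0$ (respectively $\le 0$) on the appropriate side. This is the standard Abouzaid--Seidel argument, which also holds for sections of Hamiltonian fibrations with $\beta$-dependent perturbations, provided the connection one-forms satisfy $d\beta\le 0$. From it we get the following. If all three ends lie at $r\le c$, then $u$ cannot enter $r>c$ on the Morse side; symmetrically, $u$ cannot enter $r<c$ on the twist side. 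Item (3) follows directly, since all three punctures lie in $(1-\epsilon,1+\epsilon)$.

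\textbf{Step 2 (ends in the strict regions).} For items (1) and (2), the maximum principle alone is not enough, because only one end is required to lie in a strict region. We therefore replace it by positivity of intersections with a $J$-holomorphic hypersurface. Choose a level $c$ and the foliation of $\{r=c\}\times S^*S^n$ by Reeb orbits. After a Morse--Bott perturbation, the $J$-holomorphic cylinders $\mathbb{R}\times\gamma$ over Reeb orbits $\gamma$ sweep out a codimension-two family. In dimension $2n$ we instead use the $J$-holomorphic hypersurface $H_c=\{r=c\}\times(\text{Reeb-invariant hypersurface})$, or an appropriate $J$-holomorphic divisor-like object built from $\mathbb{R}$-translates. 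We then argue as follows. A section with one end strictly inside $r<1-\epsilon$ and some part at $r>c$ must meet the level $r=c$ with a sign forced by the asymptotics. An action/energy computation with $\omega+d(h\beta)$ shows that the algebraic intersection number of $u$ with this family equals a winding or linking number. That number is determined by the asymptotic orbits and is zero, so by positivity the geometric intersection vanishes.

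\textbf{Step 3 (tame to compatible).} The cylindrical $J$ needed for Step 2 may only be tame and not compatible with the fibration form, so the final step is to transfer the result. We deform through a family $J_t$ and use Gromov compactness. If confinement failed for a compatible regular $J$, a limiting sequence would produce a broken configuration violating the statement for the tame $J$. Step 1 applied levelwise to buildings rules this out.

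The main obstacle is Step 2. In higher dimensions, Reeb orbits on $S^*S^n$ do not separate, so the surface-case non-crossing argument does not apply verbatim. To handle this, I would first try to exploit the Morse--Bott structure: the geodesic flow on round $S^*S^n$ is periodic, so the quotient $S^*S^n/S^1$ is a smooth manifold. One can then project $u$ in the collar to $\mathbb{R}\times(S^*S^n/S^1)\times B_3$ and apply a holomorphic-map maximum principle to $r$ together with an asymptotic winding bound. This is where the choice of a large class of $J$ invariant under the $S^1$ action in the collar is used.
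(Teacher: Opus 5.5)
\emph{Step 1 only works in one direction, and the missing direction is the real difficulty.}

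The maximum principle you invoke, and its integrated Abouzaid--Seidel form, only forbid interior \emph{maxima} of $\rho=\mu\circ u$. The integrated form is exactly the paper's local energy inequality $\int_\Gamma \lambda_{S^*S^n}+\epsilon_G(\mu_0-1)\,dg_{m,n}\ge 0$ for $\Gamma=u\cap\{\mu=\mu_0\}$. With $J\partial_\mu=R$ and $J$ compatible on the contact planes, $\Delta\rho$ plus first-order terms in $\nabla\rho$ equals $d\lambda_{S^*S^n}(\partial_s y,\partial_t y)\ge 0$. That sign comes from compatibility on $\xi$, not from $h''$, so choosing $h$ concave on one half does not give the reversed inequality.

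In integrated form: if no end of $u$ lies in $\{\mu<\mu_0\}$, then $\int_\Gamma dg_{m,n}=0$. The piece $u\cap\{\mu\le\mu_0\}$ then has vertical energy $\mu_0\int_\Gamma\lambda_{S^*S^n}\ge 0$, which contradicts nothing. So your claim that $u$ ``cannot enter $r<c$'' is unproved. With it fall item (3) and the case of item (2) where all three ends lie in the Morse region with some on the $\mu=1$ locus.

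The paper handles this concave direction with a different ingredient, Hein's lemma. On the orbit-free shell $[1-1/C,1-1/C^2]$, where $G_1$ and $J$ are fixed independently of $\epsilon,\epsilon_G$, any crossing costs a uniform amount of vertical energy. Meanwhile, vanishing curvature of the pullback fibration $E^{m,n}=g_{m,n}^*\mathcal{E}^1$ gives $E(u)=\mathcal{A}(x)-\mathcal{A}(y_1)-\mathcal{A}(y_2)=O(\epsilon_G\epsilon^2)$. For pairs of pants the paper sends $\epsilon_k,\epsilon_{G,k}\to 0$, so $u_k$ restricted to the compact part $C_0$ converges to a horizontal section. This forces any crossing into a cylindrical end, where Hein's lemma applies.

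\emph{Step 2 has no working mechanism.} As you note, there is no separating $J$-holomorphic hypersurface in the collar when $n\ge 2$. Positivity of intersections therefore cannot detect crossings of $\{\mu=c\}$, and the $S^1$-quotient idea is not yet an argument.

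Moreover, the relevant quantity in the mixed cases is not zero. The contradiction comes precisely from the winding $\int_\Gamma dg_{m,n}$ across $\Gamma=u\cap\{\mu=1\pm\epsilon\}$ being nonzero: it equals $-(m+n)$, $-n$, $m+n$ or $m$. In the paper this step uses three ingredients:
\begin{enumerate}
\item the local energy inequality above, proved pointwise from $J\partial_\mu=R$ and the fact that $J\Gamma'$ has negative $\partial_\mu$-component for the boundary orientation;
\item Stokes on $u\cap\{\mu\ge\mu_0\}$, again using vanishing curvature;
\item the normalization $f_1\in[\frac12\epsilon_G\epsilon^2,\frac32\epsilon_G\epsilon^2]$ on the strict Morse region, so that the slope term $\epsilon_G\epsilon\,|\int_\Gamma dg_{m,n}|$ dominates the Morse values at the ends.
\end{enumerate}

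\emph{Step 3 misses the main danger.} When passing to the perturbed $E^{m,n}_\delta$ and to tame $J$, the risk is a broken configuration glued along an intersection-region orbit: a level confined to one region sitting above a level that leaves it. The paper excludes this using cascades and the fact that the $\mu=1$ orbits have the lowest action. Any level with a positive end there has (near) zero energy and stays in the intersection region. Your levelwise appeal to Step 1 does not address this.

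You also have the roles reversed. Confinement is proved for compatible confinement-type $J$ and then transferred to nearby tame $J$, which are needed for transversality of horizontal sections.
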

Using the above we can see that we can decompose the product into
\[
\bullet_{tw} : CF^*(\phi^m,Y_{tw})\otimes CF^* (\phi^n, Y_{tw}) \rightarrow CF^*(\phi^{m+n},Y_{tw})
\]
where the cochain generators are the orbits in the Dehn twist region, and the product counts $J$-holomorphic sections contained in $E^{m,n}_{tw}$: we write this product as $x\bullet_{tw}y$. Similarly, there is a Morse product given by
\[
\bullet_{Mo} : CF^*(\phi^m,Y_{Mo})\otimes CF^* (\phi^n, Y_{Mo}) \rightarrow CF^*(\phi^{m+n},Y_{Mo})
\]
where the cochain complex is generated by orbits in the Morse region and the product simply computes the cup product on Morse cohomology. 
If $x,y,z$ are all in the intersection region, then
\[
\langle x \bullet_{tw} y,z\rangle = \langle x\bullet_{Mo} y,z\rangle
\]
We will also call this the \textit{restricted product} $\bullet_{int}$ and we set it equal to zero unless $x,y,z$ are all in the intersection region. This product can be computed only using Morse theory. Using this we can give a chain-level formula for $*$:
\[
x\bullet y = x\bullet_{tw} y + x\bullet_{Mo}y - x\bullet_{int} y.
\]

The product in the twist region $E_{tw}^{m,n}$ is a finite sector of a bigger product structure, i.e., it is a finite sector of the product structure on fixed point Floer cohomology of wrapped Dehn twists:
\[
CF^*(\phi_\infty^n)\otimes CF^*(\phi^m_\infty) \rightarrow CF^*(\phi^{n+m}_\infty).
\]
If we set up the Hamiltonian carefully, we can find action levels $L_n, L_m, L_{n+m}$ such that for filtered chain complexes $CF^{\leq L_*}(\phi^*_\infty)$,  there is a canonical bijection of generators and pair of pants in the Dehn twist region and the generators and curves counted by
\[
CF^{\leq L_n}(\phi_\infty^n)\otimes CF^{\leq L_m}(\phi_\infty^m)\rightarrow CF^{\leq L_{n+m}}(\phi_\infty^{m+n}).
\]
Observe for even $m$ and $n$, the cohomology-level product 
\[
HF(\phi_\infty^n)\otimes HF(\phi_\infty^m)\rightarrow HF(\phi^{m+n}_\infty)
\]
can be identified with the Chas-Sullivan product on the homology free loop space of $S^n$ \cite{abouzaid_viterbo,AbSchwarz}. We conjecture there is a Chas-Sullivan type product on twisted loop spaces of $S^n$ that matches the fixed point Floer cohomology product $HF(\phi_\infty^n)\otimes HF(\phi_\infty^m)\rightarrow HF(\phi^{m+n}_\infty)$, where $m$ and $n$ may be odd.\footnote{In our forthcoming work, we will compute the fixed point Floer cohomology of all wrapped Dehn twists and the associated product structures (on the cohomology level).}

While in general more work is required to understand the \textit{chain-level} interaction between the Morse and Floer-theoretic contributions to the differential and product in the neck regions, if we restrict our attention to dimension 4 and $\ZZ_2$ coefficients, we may be lucky by looking at the grading and counting the number of fixed points vs dimension of cohomology groups in a given degree. In particular, by comparison to the work of Diogo-Lisi \cite{Lisi_Diogo_complement} on symplectic cohomology,  we may obtain the following result\footnote{There are still many interesting curves in the twist region, but for geometric reasons they appear in pairs. We see a much richer cochain complex if we use $\mathbb{Z}$ coefficients, for example.}:

\begin{theorem} \label{thm:even_dim_4}
Let $(M^4,d \theta_M)$ denote an exact symplectic 4-manifold and let $\phi$ denote a Dehn twist. Then\footnote{Implicit in this calculation is how to perturb $\phi$ near the boundary of $M$, which we will specify in the proof.} for $n\geq1$
\[
HF^*(\phi ^{2n},\ZZ_2) = H^*(M\setminus S^n) \oplus \ZZ_2^2 \oplus \ZZ_2^{4(n-1)}\]
\end{theorem}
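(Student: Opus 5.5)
Our plan is to combine the cochain-level splitting $d = d_{Mo} + d_{tw} - d_{int}$ from Theorem \ref{thm:confinement_differential} with a count of generators and gradings. The aim is to show that, with $\ZZ_2$ coefficients in dimension $4$, all $J$-holomorphic contributions in the strict twist region cancel, up to a controllable Morse part. We identify $M \setminus S^2$ up to homotopy with the Morse region $M \setminus D^*S^2$ (we read the $S^n$ in the statement as the Lagrangian sphere $S^2$). Then $\phi^{2n}$ is perturbed by a $C^2$-small Morse function $f$ on the Morse region. This $f$ is chosen so that the boundary behaves as required (the perturbation near $\partial M$ mentioned in the footnote), and so that near the intersection region $[1-\epsilon,1+\epsilon]\times S^*S^2$ it is a function of $\mu$ alone, perturbed by a Morse function on $S^*S^2 \cong \RR P^3$. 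The fixed points then fall into three classes: critical points of $f$ in the strict Morse region; the $2\cdot 4 = 8$ critical points coming from the two Morse--Bott copies of $\RR P^3$ near $\mu = 1\pm\epsilon$ (one perfect Morse function on $\RR P^3$ over $\ZZ_2$ for each copy); and fixed points in the strict twist region.

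For the strict twist region we use the description of $d_{tw}$ as a finite action sector of $CF^*(\phi^{2n}_\infty)$. For even iterates this is the symplectic cochain complex of $T^*S^2$ truncated at the action level determined by $n$. The fibre of $\phi^{2}$ is the time-$2\pi$ normalized geodesic flow. Hence on the region $\mu<1$, the twist $\phi^{2n}$ has Morse--Bott families of fixed points at the fibre radii where the geodesic flow has period dividing $n$. These give $n-1$ copies of $S^*S^2\cong \RR P^3$ corresponding to the $k$-fold covers of geodesics, $1\le k\le n-1$, together with the zero section $S^2$ (the minimum family). After Morse--Bott perturbation we get $4(n-1)$ generators from the geodesic families and $2$ from the zero section.

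To compute the degrees we use Diogo--Lisi \cite{Lisi_Diogo_complement}. Their Morse--Bott model for $SH^*(T^*S^2)$, with Conley--Zehnder indices of the $k$-th family shifted by $2k\cdot(n-1)=2k$, shows that the truncated complex has its generators in degrees where, over $\ZZ_2$, the only possibly nonzero differentials inside each $\RR P^3$ family are the $J$-holomorphic cylinders from the Morse--Bott perturbation. These come in pairs, so they vanish mod $2$. The differentials between consecutive families likewise vanish mod $2$, since the relevant moduli spaces carry a free involution (the antipodal/orientation symmetry of geodesics). So $d_{tw}$ restricted to the strict twist region is zero, and these contribute $\ZZ_2^{4(n-1)}\oplus \ZZ_2^2$.

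The remaining step is the coupling through the intersection region, and we expect it to be the main obstacle. Here $d_{int}$ is a Morse differential on $[1-\epsilon,1+\epsilon]\times \RR P^3$. By the confinement theorem, a differential from a strict twist generator can only land in the twist region, and one from a strict Morse generator can only land in the Morse region. So the whole complex is a union of two subcomplexes glued along the intersection generators. We will arrange $f$ so that its outer copy of $\RR P^3$ is paired with the inner copy by $d_{int}$ (index difference one in the radial direction). Then all $8$ intersection generators cancel, and the Morse subcomplex computes $H^*(M\setminus D^*S^2)=H^*(M\setminus S^2)$. We must also check that there are no additional mod-$2$ differentials from the intersection generators into the outermost geodesic family. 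We would do this by comparing ranks by degree against the Diogo--Lisi computation, using the grading constraint: in dimension $4$ the degrees of adjacent families differ by $2$, so no index-one cylinders can exist. Putting the three pieces together gives the stated rank.
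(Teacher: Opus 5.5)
\textbf{There is a genuine gap: your argument that $d_{tw}$ vanishes mod $2$ does not work.}

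The grading claim at the end is false. After perturbation, each $\mathbb{R}P^3$ family contributes generators in four consecutive degrees, because $\dim S^*S^2=3$. Consecutive iterate families are shifted by only $2$. So adjacent families share two degrees, and grading does not exclude index-one cylinders between them. The "free involution" on the inter-family moduli spaces is asserted but never constructed. You would need $J$ and the time-dependent perturbation to be equivariant, and the action on moduli spaces to be free; none of this is set up.

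The missing idea is the one the paper actually uses. Over $\mathbb{Z}_2$, the Diogo--Lisi complex for $T^*S^2$ has, in each degree $d$, exactly $\dim_{\mathbb{Z}_2} SH^d(T^*S^2;\mathbb{Z}_2)$ generators. This is the complex with quadratic Hamiltonian and their time-dependent perturbation, and for even iterates the twist region can be arranged to match it. Each degree is finite-dimensional, so that complex has identically zero differential. The twist-region complex sits inside it as a finite action sector. Hence $d_{tw}=0$ on the \emph{whole} twist region, including the generators at $\mu=1$, and in particular $d_{int}=0$.

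Your picture of the intersection region is also wrong. With $f_1=\frac12\epsilon_G(\mu-1)^2$ on $[1-\epsilon,1+\epsilon]$, there is a single Morse--Bott family, at $\mu=1$; in the twist frame it is the $n$-th iterate family. It gives $4$ generators, not $8$ near $\mu=1\pm\epsilon$. A neck with two critical radii is outside what the confinement theorem covers. Both the local energy inequality and the fact that the $\mu=1$ orbits have minimal action use the convex quadratic form.

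More seriously, cancelling the whole neck throws away exactly the generators you need. Take $M=D^*S^2$, where $f_1$ can be chosen with no critical points in the strict Morse region. Your Morse subcomplex would then be zero, whereas $H^*(M\setminus L;\mathbb{Z}_2)=H^*(\mathbb{R}P^3;\mathbb{Z}_2)=\mathbb{Z}_2^4$. In the paper, these four generators are the Morse--Bott minimum of $f_1$ on the Morse region. Together with $f_1=\theta\mu$ near $\partial M$, they make $(CF_{Mo},d_{Mo})$ compute $H^*(M\setminus L)$.

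Once $d_{tw}=0$, the splitting $d=d_{Mo}+d_{tw}-d_{int}$ collapses to $d=d_{Mo}$. The strict twist generators are then cycles that are never hit: $2$ on the zero section and $4$ for each of the $n-1$ interior families. So the "coupling" obstacle you anticipate never arises.
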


We will explore this further in future work (see \S \ref{subsec:new_directions}).

\subsubsection{A general splitting criterion}\label{subsec:universal}

We make the curious observation that in our proof of confinement of $J$-holomorphic curves, we did not use too many properties about the Dehn twist itself; most of the relevant analysis was carried out in the Morse region. Hence, we would like to formulate a general criterion for compactly supported symplectomorphisms for which we can compute their fixed point Floer cohomology by decomposing the cochain complex into several pieces, which is of independent interest.

Suppose $(M,d\theta_M)$ is a Liouville manifold and let $\phi$ denote a compactly supported symplectomorphism. 

We suppose there is a (connected) contact-type hypersurface $Z\subset M$ with the contact form given by the restriction of $\theta_M$, and there is a slightly thickened neighborhood of $Z$ so that $(-\epsilon,+\epsilon)_r\times Z$ on which $\theta_M = e^r\theta_M|_Z$. We assume $Z$ decomposes $M$ into two regions, $U_1$ (which intersects $r<\epsilon$ and where the support of $\phi$ lies), and $U_2$ (which intersects $r>-\epsilon$, which is analogous to the Morse region).

We assume that $\phi$ preserves the hypersurface $Z$, i.e. it maps $U_2$ to $U_2$ and $U_1$ to $U_1$.

Form the mapping torus $Y_\phi$ with the standard stable Hamiltonian structure $(dt,\omega)$. We assume the following:
\begin{enumerate}
\item Suppose further that $\phi$ is an exact symplectomorphism, i.e., $\phi^*\theta_M = \theta_M+dF$ for some $F$: then there is a $1$-form $\lambda$ so that on $Y_\phi$ we have $\omega =d\lambda$. Then each periodic orbit $\gamma$ has action $\mathcal{A}(\gamma) = \int_\gamma \lambda$. 
\item We can decompose $Y_\phi = U_{1,\phi} \cup (S^1\times U_2)$, where $U_{1,\phi}$ is the mapping torus of $U_1$. The regions  $U_{1,\phi}$ and $(S^1\times U_2)$ intersect in $S^1\times (-\epsilon,\epsilon)\times Z \subset S^1\times U_2$.
The restriction of $\lambda$ to $S^1\times U_2$ takes the form $\theta_M +f_1dt$. Here $f_1$ is a Morse function on $U_2$ (away from the $r=0$ locus, which is Morse-Bott), it is of the form $\frac{1}{2}\epsilon_Gr^2$ in $(-\epsilon,\epsilon)\times Z$ and has value between $\frac{1}{2}\epsilon_G \epsilon^2$ and $\frac{3}{2}\epsilon_G \epsilon ^2$ on the rest of $U_2$, for some $\epsilon, \epsilon_{G} > 0$.
\item We can take $\epsilon_G$ and $\epsilon$ as small as we please, but uniformly in $\epsilon,\epsilon_G$ we can find a $C$ large so that in a region near the intersection of $U_{1,\phi}$ and $S^1\times U_2$, which we take to be of the form
\[
U_C:=S^1\times [-1/C,\epsilon]_r \times Z
\]
we have $\lambda =\theta_M +H(r)dt$. Note $U_C$ is just a slight extension of the chart we have chosen for $U_{1,\phi}\cap S^1\times U_2 = S^1\times (-\epsilon,\epsilon)\times Z \subset U_C$. Here $H(r)$ satisfies:
\begin{itemize}

\item $H$ only depends on $r\in (-1/C,\epsilon)$, and is a smooth convex function. For $r\in (-\epsilon,\epsilon)$ we have $H(r)=\frac{1}{2}\epsilon_Gr^2$.
\item $H(r)$ does not have any time-1 periodic orbits in the interval $[-1/C,0)$,
\item $H(r)$ is independent of $\epsilon,\epsilon_G$ in the interval $(-1/C,-1/C^2)$.
\end{itemize}
\end{enumerate}

\begin{theorem}
Assume $\phi$ satisfies the above assumptions: then we can find $\epsilon, \epsilon_G, \delta>0$ sufficiently small such that after a size $\delta$ perturbation of $\phi$ to ensure nondegeneracy, we can choose a compatible $J$ of confinement type  so that the splitting formula holds. i.e. we can write \[d= d_{u_1}+d_{Mo}-d_{int}.\] Here $d_{Mo}$ counts $J$-holomorphic curves in $U_2$, which can be identified with Morse homology, $d_{u_1}$ counts $J$-holomorphic curves contained in $U_1$, and $d_{int}$ counts $J$-holomorphic curves in $(-\epsilon,\epsilon)\times Z$, which is also computable via Morse theory.
\end{theorem}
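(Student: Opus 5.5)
The plan is to transplant the proof of the confinement result for the differential (Theorem \ref{thm:confinement_differential}) to this abstract setting. That proof only used the local form of $\lambda$ near the intersection region and the Morse structure on $U_2$; it did not use the Dehn twist itself. The splitting formula $d=d_{u_1}+d_{Mo}-d_{int}$ then follows formally from three confinement statements:
\begin{itemize}
\item sections with both ends in $U_{1,\phi}$ stay in $U_{1,\phi}$;
\item sections with both ends in $S^1\times U_2$ stay there;
\item sections with an end in a strict region stay in the corresponding region.
\end{itemize}
Given these, the counts with ends in the intersection region agree and are computed by Morse flow lines of $\frac12\epsilon_G r^2$ (perturbed by $\delta$) on $(-\epsilon,\epsilon)\times Z$. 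Subtracting $d_{int}$ corrects the double counting.

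\textbf{Step 1: choice of $J$ and sorting orbits by action.} First choose $J$ on $\RR\times Y_\phi$ of confinement type. On $U_C$ it should be cylindrical in the $(s,t)$ directions, preserve $\ker(dt)$, and restrict on each fiber to an $r$-translation-invariant almost complex structure compatible with $d(e^r\theta_M|_Z)$ that sends $\partial_r$ to the Reeb field. On $S^1\times U_2$ it should be a small time-independent perturbation making the sections of $(\RR\times S^1)\times U_2$ correspond to gradient flow lines of $f_1$, via the standard Floer's rescaling argument. Small $\epsilon_G,\epsilon$ makes this work. Next, use the action bound. The orbits in $S^1\times U_2$ are critical points of $f_1$ and have action $f_1(p)\in[0,\tfrac32\epsilon_G\epsilon^2]$. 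Orbits in the strict $U_1$ region lie at $r<-1/C$, since $H$ has no orbits in $[-1/C,0)$. Their actions are bounded below by a constant independent of $\epsilon,\epsilon_G$, because $H$ is fixed on $(-1/C,-1/C^2)$. Energy $=$ action difference is then very small for curves between orbits in the Morse or intersection region.

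\textbf{Step 2: confinement via the neck.} I will use two tools.
\begin{enumerate}
\item A maximum principle for the function $r$ on the region $r\in(-1/C,\epsilon)$. Since $\lambda=\theta_M+H(r)dt$ with $H$ convex, $r\circ u$ is subharmonic there. This is the usual Floer/Hamiltonian maximum principle for convex radial Hamiltonians on the symplectization, and it prevents a curve with ends at $r\le\epsilon$ from entering $U_2\setminus U_C$ through $r=\epsilon$.
\item A monotonicity/small-energy argument in the other direction. A curve with ends in $S^1\times U_2$ has energy $\lesssim\epsilon_G\epsilon^2$. If it entered $r<-1/C^2$, it would have to cross the fixed-width collar $S^1\times[-1/C,-1/C^2]\times Z$, on which $J$ and $H$ are independent of $\epsilon,\epsilon_G$. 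Monotonicity (or a Gromov compactness argument as $\epsilon_G\to0$, producing a nonconstant limit curve of zero energy) gives a uniform positive lower bound on the energy spent there, which is a contradiction. So the curve stays in $r\ge-1/C^2$. There the first tool applies: the minimum principle for the convex $H$ with $H'<0$ pushes it to $r>-\epsilon$.
\end{enumerate}
The mixed case with an end in a strict region is handled the same way: by the maximum principle a curve from a strict $U_1$ orbit cannot reach $U_2\setminus U_C$, and by the energy bound a curve from a strict Morse orbit cannot reach $U_1$. Nondegeneracy is achieved by a $\delta$-small perturbation supported away from the neck, or by a Morse--Bott perturbation along $r=0$ by a Morse function on $Z$. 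This preserves the estimates for $\delta\ll\epsilon_G\epsilon^2$. Transversality inside the class follows because the conditions on $J$ are imposed only in $U_C$, and the curves pass through regions where $J$ is free or are Morse flow lines.

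\textbf{Main obstacle.} I expect the hardest part to be the second tool, the uniform lower energy bound for crossing the fixed collar. It has to hold uniformly as $\epsilon,\epsilon_G,\delta\to0$, and be compatible with the rescaling that identifies curves in $U_2$ with Morse flow lines. My first attempt is the compactness argument: take a sequence of counterexamples with $\epsilon_G\to0$, extract a limit in the fixed collar region, and note that it has zero $\omega$-energy but passes through $r=-1/C^2$ while being asymptotic to $r\ge-\epsilon$. Such a limit must be a trivial cylinder, which gives the contradiction.
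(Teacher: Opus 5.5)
Your second tool matches the paper. The uniform lower bound on the energy needed to cross the fixed collar $S^1\times[-1/C,-1/C^2]\times Z$, where $H$ and $J$ do not depend on $\epsilon,\epsilon_G$, is what the paper does via Hein's Proposition~\ref{prop:Hein} (see Proposition~\ref{prop_hein_differential}). It correctly keeps sections with both ends in $S^1\times U_2$ out of the deep part of $U_{1,\phi}$.

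The gap is in your first tool. A pointwise maximum principle for $r\circ u$ only controls the part of the curve lying in the collar $U_C$. Beyond $r=\epsilon$ the curve enters $U_2\setminus U_C$, which is an arbitrary compact piece of $M$ carrying an arbitrary Morse function and no radial coordinate. Unlike in symplectic cohomology, $\{r\ge r_0\}$ is not a cylindrical end. So nothing forces the maximum of $r\circ u$ to be an interior point of the collar; the curve can pass through $\{r=\epsilon\}$ and wander into $U_2$. Hence the maximum principle does not show that sections with both ends in $U_{1,\phi}$ stay there.

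It says nothing at all about the mixed configurations, where the curve \emph{must} cross the neck. Your energy bound does not rescue these either. For a section with positive end at a strict $U_1$ orbit $x$ and negative end at a strict $U_2$ orbit $y$, the energy $\mathcal{A}(x)-\mathcal{A}(y)$ is not small. Moreover, the lower bound you assert on actions of strict $U_1$ orbits is not among the hypotheses, which say nothing about $\lambda$ deep inside $U_{1,\phi}$.

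There is also a smaller problem. Your own computation gives only $\Delta r\ge(\text{first-order terms})$, i.e.\ no interior maxima. There is no minimum principle of the kind you invoke at the end of your second tool to push curves to $r>-\epsilon$.

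The missing ingredient is the integrated maximum principle, i.e.\ the local energy inequality of Lemma~\ref{lem:local_energy_inequality_differential}, combined with Stokes. Take a regular level $\Gamma=u\cap\{r=r_0\}$ with $r_0=\pm\epsilon$, outside the perturbation support, oriented as the boundary of $u\cap\{r\le r_0\}$. The confinement-type condition on $J$ at $r_0$ alone gives $\int_\Gamma\lambda_Z+H'(r_0)\,dt\ge 0$, with equality iff $\Gamma=\emptyset$, where $\lambda_Z=\theta_M|_Z$.

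Because the curvature vanishes, the energy of the outer piece $u\cap\{r\ge r_0\}$ equals $-\int_\Gamma\lambda$ plus or minus the actions of the ends lying there, and this energy is $\ge 0$.
\begin{itemize}
\item If both ends lie in $U_1$, then $\int_\Gamma dt=0$, and the outer energy is $-e^{\epsilon}\int_\Gamma\lambda_Z<0$, a contradiction.
\item In the mixed cases $\int_\Gamma dt=\pm1$, the sign depending on which end lies in $U_2$. Either way the inequality yields $\int_\Gamma\lambda_Z\ge\epsilon_G\epsilon$.
\end{itemize}
In the mixed cases this slope term of order $\epsilon_G\epsilon$ beats the Morse values $f_1\in[\frac{1}{2}\epsilon_G\epsilon^2,\frac{3}{2}\epsilon_G\epsilon^2]$ at the end in $U_2$, forcing negative outer energy. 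This is exactly where the prescribed range of $f_1$ is used. It requires no information about the orbits or actions inside $U_1$.
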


The proof is verbatim the proof of confinement we gave for the differential for fixed point Floer cohomology of Dehn twists in Section \ref{sec:confinement_differential}. The requirements we place on $J$ are only in the $r\in (-1/C,\epsilon)$ region near where $U_{1,\phi}$ intersects $S^1\times U_2$, and $J$ is allowed to be arbitrary elsewhere.

There are many other known compactly supported symplectomorphisms besides Dehn twists, for example fibered Dehn twists as well as twists around other kinds of Lagrangians, such as Lagrangian $\mathbb{CP}^n$s. We expect our splitting criteria to apply to large families of examples of these types: thus understanding fixed point Floer cohomology of these symplectomorphisms is reduced to understanding a chain-level model of the $J$-holomorphic curves in the $U_1$ region. This is by no means easy, and one would need to construct more general tools for this purpose. For Dehn twists around Lagrangian $\mathbb{CP}^n$s, we may again leverage the Viterbo isomorphism to relate this to string topology. For fibered Dehn twists the story is less clear, but is an exciting area for future work.

\subsection{Applications, observations, and future directions}\label{subsec:new_directions}

We may regard the result of Theorem \ref{thm:even_dim_4} as proving a version of an Arnol'd-type conjecture for even iterations of Dehn twists in dimension 4. It is known that these maps $\phi^{2n}$ are smoothly isotopic to the identity, and for smooth diffeomorphisms the general lower bound for the number of fixed point is given by the Lefschetz fixed point theorem. However we observe that for nondegenerate perturbations of Dehn twists, the number of fixed points is bounded below by the rank of the fixed point Floer cohomology (which increases with the number of iterations). We believe this bound on the number of fixed points is more or less sharp, because we can construct nondegenerate Dehn twists whose number of generators in the Dehn twist region is exactly the rank given by Theorem \ref{thm:even_dim_4}.

The world has gone quite far in understanding dynamics since the Arnol'd conjecture.\footnote{The fact that the rank of Floer cohomology grows with the number of iterations is new and is in contrast with the Hamiltonian Floer homology. We invite the dynamics community to figure out the right dynamics questions to ask and answer for this class of symplectomorphisms.}  In particular, it is known that the Arnol'd conjecture fails in the $C^0$ setting \cite{C0_counterexample}: there are continuous Hamiltonian homeomorphisms of symplectic manifolds with only one fixed point. Hence we find it interesting to ask:

\begin{question}
Is there a $C^0$ symplectomorphism in the ($C^0$ closure of) the connected component of $\phi^{2n}$ that has fewer fixed points than the rank of the fixed point Floer cohomology? If so, what is the minimal number of such fixed points?
\end{question}

In another direction concerning fixed points, if we compare the rank of the Floer cohomology for $\phi^{2n}$ and $\phi^{2n+2}$, even though they are both smoothly isotopic to the identity and not symplectically isotopic, $\phi^{2n+2}$ is ``further away'' from the identity component than $\phi^{2n}$ since it has more mandatory fixed points. In other words, this would suggest: 

\begin{question}
Is there an appropriate notion of \emph{hierarchy} on the symplectic mapping class in which we can make sense of the statement ``$\phi^{2n+2}$ is further away from the identity component than $\phi^{2n}$''?
\end{question}

This kind of hierarchy has been observed in contact manifolds, in the context of fillability of contact manifolds \cite{algebraic_torsion,hierarchy,landscape}. A contact manifold is (exactly) fillable if it is the positive boundary of a Liouville domain. For two contact manifolds $Y_1$ and $Y_2$ that are not fillable, one may still be ``less'' fillable than the other. This is made precise by the various kinds of torsion type invariants. For definiteness, we will take planar torsion, which is an integer-valued invariant defined geometrically. If both $Y_1$ and $Y_2$ have nonzero planar torsion, then they are not fillable. But if $Y_1$ has higher planar torsion than $Y_2$, then there may exist (exact) symplectic cobordisms from $Y_1$ to $Y_2$, but symplectic cobordisms from $Y_2$ to $Y_1$ are forbidden. Hence, in that sense $Y_1$ is ``further away'' from being fillable than $Y_2$.

To make the analogy more transparent, we will rewrite our theorems in a more suggestive way. Let $\phi^{2n}$ denote the $2n$-th iterate of a Dehn twist: the mapping torus $Y_{\phi^{2n}}$ has a stable Hamiltonian structure. The fact that $\phi^{2n}$ is smoothly isotopic (rel. boundary) to the identity means $Y_{\phi^{2n}}$ is diffeomorphic to $S^1\times M$, whereas the fact that $\phi$ has infinite order in the symplectic mapping class group should mean the stable Hamiltonian structure on $Y_{\phi^{2n}}$ is ``different'' from the the product stable Hamiltonian structure on $S^1\times M$ in some appropriate sense. The previously observed fact that $\phi^{2n+2}$ is ``less'' symplectically isotopic to the identity than $\phi^{2n}$ should also translate to some measurable geometric quantity we can associate to the stable Hamiltonian structure on the mapping tori, though currently we do not know what this may be.

If one boldly believes in an analogy between contact manifolds and mapping tori and is willing to engage in some speculation, one could ask:

\begin{question}
 Is there a reasonable cobordism theory of stable Hamiltonian structures (of mapping tori) that we can use to explain the above observed phenomenon\footnote{Is there a good notion of Giroux torsion in the mapping torus world, for example?}: that $Y_{\phi^{2n+2}}$ is ``further away'' from the product stable Hamiltonian structure $S^1\times M$ than $Y_{\phi^{2n}}$?
\end{question}
Or conversely, leveraging our optimism in the opposite direction: a way to phrase our results is that we have detected the fact that $\phi^{2n+2}$ is not (Hamiltonian) isotopic to the identity by using a count of fixed points, so we can ask:
\begin{question}
 Is there a dynamical characterization of torsion invariants for contact manifolds, or more generally in the non-fillability of contact manifolds?
\end{question}

Lastly there has been some interest in Dehn twists from the symplectic dynamics community. For certain finite energy hypersurfaces for the three-body problem, one can construct open book decompositions whose monodromy map on the pages is a squared Dehn twist \cite{three_body}. It would be very interesting to see if our approach can be pushed further to yield dynamics results.

\subsubsection{Mirror symmetry}

There are now several established schools in mirror symmetry.
One idea put forth by Abouzaid and Siebert in the early 2010s, partly motivated by theta functions (see \cite{Hulya}), is the following. If $X = M \setminus D$ is an exact symplectic manifold mirror to an algebraic variety $\check{X}$, then the (degree-zero) symplectic cohomology ring $SH^{0}(X)$ is expected to correspond to the ring of global functions on $\check{X}$: 
$$
SH^{0}(M \setminus D) \cong H^{0}(\check{X}, \scr{O}_{\check{X}})
$$
Hence when the mirror $\check{X}$ is affine, this gives a global description of the mirror that is `intrinsic' to $M \setminus D$ (cf. \cite{intrinsic_mirror}).

Of course, one may be interested in understanding mirror symmetry when $\check{X}$ is \textit{not} affine, which leaves open the question of how to recover the data that allows one to reconstruct the mirror. The Gross-Siebert program \cite{intrinsic_mirror} provides an answer: one takes a toric degeneration $\check{\scr{X}} \to \Delta$ of $\check{X}$ along with an ample line bundle $\scr{L}$ (explicitly given in terms of some piecewise linear function $\phi$). Tensoring by the line bundle $\scr{L}$ gives an automorphism of $D^b \mathrm{Coh}(\check{X})$ that corresponds under mirror symmetry to some automorphism of the Fukaya category $\scr{F}(X)$, which should in fact be induced by a symplectomorphism $\phi$ coming from the monodromy of the mirror toric degeneration: this idea seems to have been known for some time in the context of SYZ mirror symmetry (for a precise statement in the context of the Gross-Siebert program, see \cite[\S 5.3]{jeffs}). Analogous to the symplectic cohomology case, work in progress by Perutz-Siebert posits that one may recover the homogeneous coordinate ring on the mirror side by studying the symplectic monodromy ring on the $A$ side:
\begin{equation*}
    \bigoplus_{d=0}^{\infty} HF^{0}(\phi^d) \cong \bigoplus_{d=0}^{\infty} H^{0}(\check{X}, \scr{L}^{\otimes d})
\end{equation*}
Since $\scr{L}$ is ample, the homogeneous coordinate ring on the right is sufficient to recover $\check{X}$ from its embedding in projective space. We carried out this computation in detail in \cite[\S 2.4]{JYZ} in the case of curves of genus $\geq 2$, finding a number of homogeneous coordinate rings for different embeddings of nodal elliptic curves into (weighted) projective spaces. Of course, in general this monodromy symplectomorphism is not a composition of Dehn twists, but sometimes instead a composition of \textit{fibered} Dehn twists, which will be studied further in forthcoming work of Perutz-Siebert. 

One instance where we expect our computations to be useful for understanding mirror symmetry is K3 surfaces\footnote{Since K3s are hyperK\"ahler, one expects not to see (generically) any non-constant genus-zero holomorphic curves, and hence any na\"ively-defined curve-counting invariant will be trivial. By contrast, our Floer-theoretic algebra construction are expected to provide highly non-trivial answers.}. For instance, one may consider the mirror pair of nodal K3 surfaces studied in a series of papers \cite{HLLY1,HLLY2,HLLY3,HLLY4,HLLY5}, constructed as follows. Here $X$ is a nodal sextic K3 surface, obtained by taking a double cover of $\mathbb{P}^2$, branched over $D$, a collection of six lines in general position: this produces a sextic K3 surface with 15 nodal points. The mirror $\check{X}$ is likewise obtained as a double cover of a degree-$6$ del Pezzo surface $Bl_{3}(\mathbb{P}^2)$ branched over a divisor $\check{D}$ described by $\Pi_{i=1}^{3} g_{i1} g_{i2} = 0$ where $g_{i1}, g_{i2}$ are general sections of $\scr{O}(H - E_i)$. This results in a K3 surface $\check{X}$ with $12$ nodal points, and \cite{HLLY3} predicts that it is mirror to $X$: indeed, they verify Hodge-theoretic mirror symmetry for this pair (and their generalizations).

In future work, we hope to generalize these computations to our fixed point Floer cohomology model for the quantum cohomology of a singular hypersurface. What makes this computation tractable in this case is that the monodromy around the `large complex structure limit point' is given by the composition of Dehn twists around a collection of disjoint vanishing cycles. These vanishing cycles around which we perform the Dehn twists in $X$ can be understood as matching cycles in $\mathbb{P}^2$ corresponding to degenerations of the curve $D$. Combining this with a gluing result for Seidel classes from \cite[Theorem 4.8]{JYZ}, we hope to largely reduce the Floer theory to a local computation using our curve confinement results proven here.

\subsubsection{The arc-Floer conjecture}

There has been a recent surge of interest in fixed point Floer cohomology in the algebraic geometry community via the arc-Floer conjectures \cite{arc_floer_plane, poza2025}. Consider an algebraic map $f:\mathbb{C}^n\rightarrow \mathbb{C}$ with an isolated singularity at the origin. The arc space $\mathcal{X}_m$ of $f^{-1}(0)$ is the space of maps from $\mathbb{C}[t]/t^{m+1}$ to $f^{-1}(0)$. The monodromy $\phi$ around the singular fiber gives a symplectomorphism acting on the nearby fiber $f^{-1}(\epsilon)$.

The arc Floer conjecture posits that up to grading shifts\footnote{Notice that the right-hand side is Floer \emph{cohomology} in our notation. By our conventions, our cohomology is the same as homology in \cite{Uljarevic}. The $+$ indicates how we perturb $\phi$ near the boundary of $f^{-1}(\epsilon)$.} There is an isomorphism of groups:
\[
H^*_c(\mathcal{X}_m)\cong HF^*(\phi^m,+).
\]
This has been verified in complex dimension 2 (for plane curves) \cite{arc_floer_plane} and for certain kinds of semi-homogeneous singularities \cite{poza2025}. All of these computations proceed by computing the dimension of homology groups on both sides and comparing the answers.\footnote{All the computations done on the Floer side rely on McLean's spectral sequence \cite{Mclean_log}.} On one hand, we hope our confinement results (see section \ref{subsec:universal}) can give some tools for computing fixed point Floer cohomology in more novel situations, and on the other hand we hope the confinement results give some hints on why such an isomorphism should be true. 

Note that on the algebraic side, for smooth varieties the homology of the arc space is just the homology of the space itself: the interesting information captured by the arc space is in the singular locus of $f^{-1}(0)$. Our confinement results give a symplectic interpretation of this in the case where the monodromy is a single Dehn twist: Floer-theoretically all the interesting information is concentrated near the vanishing cycles of the fibration, which is the locus that collapses to the singularity of $f^{-1}(0)$.

\subsection{Techniques and context}\label{sec:techniques}

In this section, we give some context for the technical aspects of this paper and explain its relationship to other work. In \cite{yao-zhao}, we proved a confinement result of a similar nature in the case of Dehn twists on (real two-dimensional) surfaces, in which we also see the coupling of a finite sector of symplectic cohomology of $T^*S^1$ with the Morse homology of the complement. The main technical result in that paper is the extension of an intersection theory lemma from \cite{hutchings2005periodic} which considers intersections of $J$-holomorphic curves with $3$-dimensional surfaces. In \cite{hutchings2005periodic}, this lemma was used to control the $J$-holomorphic curves that appear in the differential of periodic Floer homology of Dehn twists, and we adapted that lemma to symplectic cobordisms in order to study product (and coproduct) operations and achieved the necessary confinement.

When we considered the generalization of confinement of $J$-holomorphic curves in a higher-dimensional setting, we found the most natural extension of \cite{hutchings2005periodic} actually turns out to be the integrated maximum principle of Abouzaid-Seidel \cite[Lemma 7.2]{abouzaid_seidel_viterbo}. To be precise, this lemma (and its generalization in our settings) covers about 3 out of 4 possibilities required to confine $J$-holomorphic curves; the last case is covered by a lemma of Hein \cite{Hein} and its further generalizations.

Historically, the main use of Abouzaid-Seidel's maximum principle has been to rule out holomorphic curves escaping to infinity in symplectic cohomology/wrapped Floer cohomology type theories (there were also more isolated uses of this theorem to avoid bad degenerations of $J$-holomorphic curves, for example \cite{Lisi_Diogo_complement}). As far as we are aware, the work of Ganor-Tanny \cite{Ganor_2023} was among the first to systematically use the Abouzaid-Seidel maximum principle (plus Hein's result) to try to confine $J$-holomorphic curves enough to understand the chain complex that makes up Hamiltonian Floer homology. They did not achieve a full enumeration of the $J$-holomorphic curves that appear in the chain complex, but constrained the curves enough to draw conclusions about spectral invariants in Hamiltonian Floer homology (but they did not use it to \textit{compute} the homology, which was already known; and the product was not considered).

In light of the above developments, one major conceptual contribution of this paper is the realization that the confinement results of Abouzaid-Seidel (and Hein) can be used to compute fixed point Floer cohomology of compactly supported symplectomorphisms and their corresponding product operations. While we are not in the Hamiltonian Floer homology setup of the original work of Abouzaid-Seidel or Hein, by using the language of stable Hamiltonian structures, we can set things up carefully so that a piece of the symplectic fibration in which we count $J$-holomorphic curves looks similar enough to Hamiltonian Floer theory so that the original inequalities derived in Abouzaid-Seidel and Hein can be proved also in our setting (and from which we then deduce confinement). On a technical level, the generalizations of Abouzaid-Seidel and Hein's methods in the case of products requires some further work. Because these arguments rely on action/energy, we have to carefully construct the symplectic fibrations to avoid the appearance of curvature \cite{McDuff_Salamon}. We first prove the relevant inequalities/confinement in the Morse-Bott setting where we can arrange the curvature to be zero, then deduce the confinement for the nondegenerate case via a degeneration to cascades argument. 

\subsection{List of conventions} \label{sec:conventions}

The symplectic geometry literature has a number of incompatible conventions that make it difficult to connect results from different papers. We hope this list of conventions can help the reader orient our paper among the different conventions.

\subsubsection*{Signs of Hamiltonian vector fields}

We take the convention that the tautological 1-form on $T^*S^n$ is given by $\lambda = \sum_{i=1}^n p_i dq_i$ in local coordinates. We take the symplectic form to be $\omega=d\lambda$.

Throughout the text, we take our convention $\iota_{X_H}\omega = dH$. This is the opposite of many conventions in symplectic geometry (for example, in Abouzaid's paper, \textit{Symplectic Cohomology and Viterbo's Theorem} \cite{abouzaid_viterbo}) for which we apologize. In the case of symplectic cohomology, a $\tilde{J}$-holomorphic section $\tilde{u}:\mathbb{R}\times S^1\to\mathbb{R}\times S^1\times M, (s, t)\mapsto (s, t, u(s, t))$ with a fiberwise symplectic form $\omega+dH\wedge dt$ and a compatible almost complex structure $\tilde{J}$ induced from an almost complex structure $J$ on $M$, is equivalent to a solution of the Floer equation $\partial_s u + J(\partial_t u - X_H) = 0$. 

We normalize our conventions against the conventions in \cite{abouzaid_viterbo}: our differential points are in the opposite way as \cite{abouzaid_viterbo}. In \cite{abouzaid_viterbo}, $\langle dx_1,x_0\rangle$ counts $J$-holomorphic cylinders with its $+\infty$ end at $x_1$ and $-\infty$ end at $x_0$. In our conventions for the differential for $\langle dx_1,x_0\rangle$, we count $J$-holomorphic cylinders with $+\infty$ end at $x_0$ and $-\infty$ end at $x_1$. Because of these canceling conventions\footnote{If $u$ satisfies $\partial_su+J(\partial_t u-X_H) = 0$ then $v(s, t) := u(-s, -t)$ satisfies $\partial_sv+J(\partial_t v+X_H) = 0$.}, when we say symplectic cohomology in our setting (as well as taking products) it agrees with Abouzaid's conventions in \cite{abouzaid_viterbo}.

The main source of the above discrepancy is that for the purposes of defining fixed point Floer cohomology, we follow the SFT convention for the differential, for which, if we wish to compute Floer homology, we count $ \langle dx_1,x_0 \rangle$ with $x_1$ at $s=+\infty$, and $x_0$ at $s=-\infty$, and the opposite for cohomology. This is the convention we followed throughout this sequence of papers \cite{JYZ, yao-zhao}. We decided to consider fixed point Floer cohomology for positive Dehn twists because under the above conventions the product operation on the fixed point Floer cohomology is particularly rich, seeing a finite sector of the Chas-Sullivan product in string topology. 

Generally in the Floer homology community (for instance \cite{abouzaid_viterbo}, and \cite{ganatra_thesis}) the convention is that homology differential counts $\langle dx_1,x_0 \rangle$ with $x_1$ at $s=-\infty$, and $x_0$ at $s=+\infty$, and the opposite for cohomology. When we encounter Floer homology groups defined using this convention, we will use the subscript $FL$ to indicate this. For example, if $\phi$ is a symplectomorphism on a closed symplectic manifold $M$, we have

 \[
 HF^*(M,\phi) \cong HF^*_{FL}(M,\phi^{-1})
 \]
 
 and the pair of pants product 
 \[
 HF^*(M,\phi)\otimes HF^*(M,\phi) \rightarrow HF^*(M,\phi^2)
 \]
 is identified with (on a set theoretic level) 
 \[
 HF^*_{FL}(M,\phi^{-1})\otimes HF^*_{FL}(M,\phi^{-1}) \rightarrow HF^*_{FL}(M,\phi^{-2}).
 \]

% \textit{Comparison:}

% Abouzaid Viterbo paper: $J \partial_s u = (\partial_t u - X)$ and $x_1$ (input) at $+\infty$, and $x_0$ (output) at $-\infty$ (p. 14), with $\omega(X, \cdot) = -d H$.

% Sheel's thesis: $\partial_s u = - J(\partial_t u - X)$ and $x_1$ (input) at $+\infty$ and $x_0$ (output) at $-\infty$ (p. 27)

\subsubsection*{Size of constants}
In this section we explain our choices of constants we will use throughout this paper. For the differential we shall always fix $n$, which is the number of times we are Dehn twisting. For the product operation we will always fix $n$ and $m$, which are the iterates of Dehn twists we are putting into the product.

We shall fix a large number $C>0$. We shall take two constants $\epsilon>0$ and $\epsilon_G>0$ which, after fixing $C$, will be sufficiently small. How small they need to be will be specified in the proof. After taking $\epsilon, \epsilon_G$ to be sufficiently small, we will introduce another parameter $\delta_*$ which is taken to be small relative to the fixed parameters $\epsilon_G$ and $\epsilon$. This parameter $\delta_*$ will control the size of the perturbations when we perturb Morse-Bott orbits to be Morse.

For confinement of the product, the general logic will be that we choose $C, \epsilon, \epsilon_G$ to be small enough to confine all the $J$-holomorphic curves in the Morse-Bott setting: then we will take another small perturbation of size $\delta$ so as to break the Morse-Bott degeneracy.

\subsection*{Acknowledgments}

We would like to thank Ivan Smith, Yankı Lekili, Kyler Siegel, Bernd Siebert and Jean Gutt.

MJ was supported via UKRI Horizon Europe grant FloerPlus35 (EP/X030660/1). YY was partially suported by ERC Starting Grant No. 851701 and ANR COSY ANR-21-CE40-0002.
\clearpage

\section{Geometric setup}

\subsection{Definition of the Dehn twist}

We begin by describing the Dehn twist around a Lagrangian sphere in all dimensions $\geq 4$.

We start with an Liouville domain $(M^{2n}, d\theta_M)$ in dimension $2n\ge 4$ with boundary $\partial M=P$. Let $L$ be a Lagrangian sphere $S^n\rightarrow L \subset M$. In what follows, we shall describe the definition of a Dehn twist $\phi: M\rightarrow M$ around $L$, which is a symplectomorphism of $M$ supported in a tubular neighborhood of $L$.

By the Weinstein neighborhood theorem, there exists an embedding $\iota: (D^*S^n,d\theta_{T^*S^n}) \rightarrow (M^{2n}, d\theta_M)$ such that $\iota^* d\theta_M = d\theta_{T^*S^n}$ and $\iota$ restricted to the zero section is the embedding $S^n \to L \subset M$ above.\footnote{We take the convention that in local coordinates $\theta_{T^*S^n}=\sum p_jdq_j$, where $p_j$ is the fiber coordinate.} We then modify the Liouville form $\theta_M$ locally\footnote{Fixed point Floer cohomology only depends on the symplectic form $d\theta_M$ and not on the primitive. The Dehn twist operation may depend, however, on the choice of framing $\iota: (D^*S^n,\theta_{T^*S^n}) \rightarrow M$. We suppress this from the notation.} so that in fact
\[
\iota^* \theta_M = \theta_{T^*S^n}.
\]
To be more precise, we have $\iota^*d\theta_M = d\theta_{T^*S^n}$ by the Weinstein neighborhood theorem, so it follows that there is some function $f$ on $D^{\ast} S^n$ such that
\[
\theta_{T^*S^n}-\iota^*\theta_M = df
\]
Here we are using the fact that $H^1(S^n,\mathbb{R})=0$ when $n \geq 2$. Then we take $\theta_M' = \theta_M+ d(\beta f)$ where $\beta$ is an appropriate cutoff function.

Now that we have an exact embedding from $(D^*S^n,\theta_{T^*S^n})$ to $M$, it suffices to construct a compactly supported symplectomorphism on $(D^*S^n,\theta_{T^*S^n})$ and extend it to rest of $M$ (and since we are doing Floer theory we eventually need to add small perturbations to make it nondegenerate) which is what we turn to next.

\subsubsection{Dehn twists on $T^*S^n$}

We first review how to define the Dehn twist (and its iterates) as a compactly supported symplectomorphism on $T^*S^n$. We follow the treatment in \cite{seidel_long_exact_sequence}.

We begin by considering the Lefschetz fibration with total space $\mathbb{C}^{n+1}$ and projection 
\[
q: \mathbb{C}^{n} \rightarrow \mathbb{C}_t
\]
given by 
\[
t= q(Z_1,Z_2...,Z_{n+1})=Z_1^2+Z_2^2+\dots +Z_{n+1}^2. 
\]
Here we take $(Z_1,...,Z_{n+1})$ to be complex coordinates on $\mathbb{C}^{n+1}$ and we shall use $Z_j = X_j + iY_j$ to denote their real and complex components.

Our description of $T^*S^n$ will always be given by
\[
T^*S^n= \{\xi+i\eta \in \mathbb{C}^{n+1} \; : \; |\xi|^2=1, \langle \xi,\eta\rangle =0\}
\]
where $(\xi,\eta)$ are vectors in $\mathbb{R}^{n+1}$, and $\langle -,-\rangle$ is the standard real inner product. We will also use the induced metric to define the fiber length $\|\eta\|$.\footnote{It follows that the Reeb orbits on the unit co-sphere bundle $S^*S^n$ are $2\pi$-periodic.} Now we would like to identify the above description of $T^*S^n$ with $q^{-1}(1)$.

We first describe $q^{-1}(1)= \left\{ \sum_{i=1}^{n+1}Z_i^2=1 \right\}$ as the set
\[
q^{-1}(1) = \left\{(X_1,X_2,...,X_{n+1}) +i(Y_1,Y_2,...,Y_{n+1}) \; \bigg| \; \sum_{j=1}^{n+1}X_j^2 -\sum_{j=1}^{n+1}Y_j^2=1, \quad \langle X,Y \rangle =0\right\}
\]
Here $X=(X_1,X_2,...,X_{n+1})$ and $Y= (Y_1,Y_2,...,Y_{n+1})$.

Then we may identify $T^*S^n$ with $q^{-1}(1)$ as follows:
if we let $\phi_1: q^{-1}(1)\rightarrow T^*S^n$ denote the map
\[
Z=X+iY\rightarrow ( X\|X\|^{-1},-Y\|X\|) = (\xi, \eta)
\]
where the first factor $\xi$ is the $S^n$ direction and the second factor $\eta$ is the fiber direction.  This is a diffeomorphism from the fiber $q^{-1}(1)$ to $T^*S^n$.

With the above understood, we next describe the Dehn twist on $T^*S^n$ in more concrete terms. To be more precise, we describe the Dehn twist on $T^*S^n$ as the image of a Hamiltonian diffeomorphism away from the zero section (and this Hamiltonian diffeomorphism extends smoothly to the zero section as the antipodal map). To do this, we shall use the identification between $T^*S^n$ and $q^{-1}(1)$. We define
\[
\Sigma_c = \sqrt{c}S^n  = \{\sqrt{c} (X_1,0,X_2,0, \cdots, X_{n+1}, 0) \in \mathbb{R}^{n+1} \subset \mathbb{C}^{n+1} \; | \; X_1^2+X_2^2+\cdots X_{n+1}^2 =1 \} \subset q^{-1}(c)
\]
and define $\Sigma$ to be the union of all $\Sigma_c$'s (including $c=0$). We will also denote by $\Sigma^*$ the union of $\Sigma_c$'s for $c\ne 0$. 

We will now describe the Dehn twist. Let $Z = (Z_1, Z_2, \cdots, Z_{n+1})\in \mathbb{C}^{n+1}$. We let $\hat{Z} = e^{-i\alpha/2}(Z_1, Z_2, \cdots, Z_{n+1})$, where $\alpha:= \arg(q(Z))$.
In Seidel's notation, consider the map
\[
\Phi: \mathbb{C}^{n+1} \setminus \Sigma \rightarrow \mathbb{C} \times (T^*S^n\setminus T(0))
\]
defined by\footnote{Here the choice of $\alpha/2$ does not matter; the two factors of $-1$ cancel in the right hand side of $\Phi$.}

\[
\Phi(Z_1, Z_2, \cdots, Z_{n+1}) = (q(Z_1, Z_2, \cdots, Z_{n+1}), \sigma_{\alpha/2}(re(\hat{Z})\|re(\hat{Z})\|^{-1},-im(\hat{Z})\|re(\hat{Z})\|))
\]

Here $T(0)$ means the zero section, and $\sigma_t(\eta,\xi) :=(\cos(t)\xi+\sin(t) \frac{\eta}{\|\eta\|},\cos(t)\eta-\sin(t)\|\eta\|\xi )$.

Then $\Phi$ is a diffeomorphism. We also have
\[
(\Phi^{-1})^* \theta_{\mathbb{C}^{n+1}} = \theta_{T^*S^n} - \tilde{R}_s(\mu)d\alpha, \quad \tilde{R}_s(t) = \frac{1}{2}t - \frac{1}{2}(t^2+s^2/4)^{1/2}
\]
where $\theta_{\mathbb{C}^{n+1}}$ is the standard Liouville form on $\mathbb{C}^{n+1}$ given by
\[
\frac{1}{2}(X_1dY_1-Y_1dX_1+...+X_{n+1}dY_{n+1}-Y_{n+1}dX_{n+1}),
\]
$\theta_{T^*S^n}$ is the Liouville form on $T^*S^n$,
$\alpha$ is the angular coordinate of $q(Z)$, and $\mu$ is the absolute value of the fiber direction in $T^*S^n$ (in our above conventions, $\mu = |\eta|$). We also have $s=|q(Z)|$. Now for each $s>0$, consider the the Hamiltonian flow of $\tilde{R}_s(\mu(y))$, which we can describe explicitly as
\[
\phi(y) = \sigma_{2\pi \tilde{R}_s'(\mu(y))}(y)
\]
away from the zero section $T(0)$. This extends smoothly over the zero section $T(0)$: see \cite[Lemma 1.8]{seidel_long_exact_sequence}. However, it is not compactly supported. If we make a slight modification of $\tilde{R}_1$ to make it compactly supported away from the zero section, then we obtain a compactly supported symplectomorphism of $T^*S^n$. We shall call this \textit{Seidel's model Dehn twist}.

For our purposes, we need a slightly different modification of the Dehn twist for $\mu$ near $1$.

\begin{definition}
Let the functions $G_1(t)$ be defined as follows: for $t<1/4, G_1(t)=\tilde{R}_1(t)$.
For $t$ near $t=1$, $G_1(t)= -\frac{1}{2}\epsilon_G(t-1)^2$. Between these regions, $-G_1(t)$ is smooth, and its derivative with respect to $t$ is monotonically increasing.\footnote{The number $\epsilon_G$ is chosen so that $G$ is sufficiently $C^2$ small for $t$ in a neighborhood of $1$. We will more carefully describe how small $\epsilon_G$ needs to be when we introduce fixed point Floer cohomology.}
\end{definition}

We will use the functions $G_1(t)$ above instead of $\tilde{R}_s(t)$ to describe our Dehn twists. We shall call the pair
\[
(T^*S^n|_{\mu\leq 1+\epsilon}, \phi_{G_1})
\]
where $\phi_{G_1}$ is the Dehn twist defined as above using the function $G_1(t)$ \emph{adapted model Dehn twists}.

Our computation of fixed point Floer cohomology (and its product) is performed by counting $J$-holomorphic curves in the mapping torus. Using the above, we will now rephrase the Dehn twist as the existence of a certain stable Hamiltonian structure on a mapping torus.

As a first approximation, the total space of the mapping torus can be taken to be $\mathbb{C}^{n+1}|_{s=1}$, with the 1-form $q^*d\alpha$ and the two-form being $d\theta_{\mathbb{C}^{n+1}}$ modified slightly away from $\Sigma$ (which is the union of the zero sections of $T^*S^n$). 

To get to the precise definition,
we shall consider the mapping torus of the codisk bundle $D^*S^n$ (instead of $T^*S^n$) and we shall view it as being built out of two pieces glued together. The first piece is $S^1 \times (T^*S^n\setminus T(0))$ together with the one-form $d\alpha$ and the two-form $d (\theta_{T^*S^n} - G_1(\mu)d\alpha)$, and the second piece is $\overline{\Phi^{-1}((S^1\times D_{\le \epsilon}^*S^n)\setminus T(0))}\subset \mathbb{C}^{n+1}$, together with ($q^*d\alpha$,  $d\theta_{\mathbb{C}^{n+1}}$). Taking the closure ensures we include the zero section $\Sigma_c$ for all $c\in S^1$. The two pieces are identified via the map $\Phi$.

Let us denote by $Y_{\phi_{G}}$ the mapping torus of $\phi_{G_1}$ with domain $D^*_{\mu\leq 1+\epsilon}S^n$ (which restricts to the antipodal map on the zero section). The standard symplectic form $d\theta_{T^*S^n}$ induces a closed two-form $\omega_Y$ on the mapping torus $Y_\tau$. The stable Hamiltonian structure is given by $(d\alpha, \omega_Y)$.
In the above, the first piece can be viewed naturally as a subset of $Y_{\phi_{G}}$. The identification is given by $(\alpha, p)\mapsto [(\alpha, \phi_{-\tilde{G}_1}^{\alpha/2\pi}(p))]$, where $\phi_{-{G}_1}^{\alpha}$ is the time-$\alpha$ map\footnote{In this paper, the Hamiltonian vector field $X_H$ of a Hamiltonian $H$ is defined as $\iota_{X_H}\omega = dH$.} of the Hamiltonian $-{G}_1$. The second piece is naturally included as a subset of $Y_{\phi_G}$.

\subsubsection{Iterations of Dehn twists on $T^{\ast} S^n$}

We now write down the stable Hamiltonian structures relevant for the $k$-iterations of Dehn twists: for the case of adapted Dehn twists we shall denote them by $\phi_{G}^k$.

We let $E^k=\{(t,Z_1, Z_2, \cdots, Z_{n+1})\in \mathbb{C}_t^*\times \mathbb{C}^{n+1} \; | \; t^k= Z_1^2+Z_2^2+\cdots+Z_{n+1}^2\}$, and $E_0^k:= E^k \setminus \Sigma$, where $\Sigma$ is given by
\begin{equation*}
    \Sigma = \{(t,\sqrt{t^k}(X_1, 0, X_2, 0, \cdots, X_{n+1}, 0)) \in \mathbb{C}_t^*\times \mathbb{C}^{n+1} \; | \; t\ne 0 \; \text{and} \; (X_1, X_2,\cdots, X_{n+1})\in S^n\subset \mathbb{R}^{n+1} \}
\end{equation*}

Then $E^k$ has a projection map $q:E^k\rightarrow \mathbb{C}_t^*$, which is the projection to the $\mathbb{C}_t$ component. We put the following fiberwise Liouville form $\theta_0$ on $E^k$: the canonical Liouville form defined on $\mathbb{C}^{n+1}$ restricted to $E^k$.

In this language, we consider $\Phi_0:E_0^k \rightarrow \mathbb{C} \times (T^*S^{n}\setminus T(0))$
defined by

\[
\Phi_0(t,Z_1, Z_2, \cdots, Z_{n+1}) = (t^k, \sigma_{k\alpha/2}(re(\hat{Z})\|re(\hat{Z})\|^{-1},-im(\hat{Z})\|re(\hat{Z})\|))
\]
which satisfies 
\[
(\Phi_0^{-1})^* \theta_0 = \theta_{T^*S^n} - k\tilde{R}_{s^k}(\mu)d\alpha
\]

near the zero section, where we use the polar coordinate $t=s e^{i\alpha}$, and $$\hat{Z} = e^{-ik\alpha/2}(Z_1, Z_2, \cdots, Z_{n+1}).$$ 

We define the total space of the bundle $B^k$ to be $E^k|_{|t|=1}$, which can be viewed as the pullback of $B^1$ under the $k$-th covering map $t\mapsto t^k$. The stable Hamiltonian structure of the bundle is given by $d\alpha$ and the restriction of $d\theta_0$, which, under the identification $\Phi$, is the pair $(d\alpha, d (\theta_{T^*S^2} - k\tilde{R}_{s^k}(\mu)d\alpha))$ on $S^1\times (T^*S^n\setminus T(0))$.

With the above we recover \emph{Seidel's model for iterates of a Dehn twist}. As before we may modify $\widetilde{R}_{1}(\mu)$ to $G_1(\mu)$ so that the stable Hamiltonian structure away from the zero section is given by $(d\alpha,d (\theta_{T^*S^2} - kG_{1}(\mu)d\alpha))$ to obtain the \emph{adapted model for iterates of a Dehn twist}. We will use the notation $B^k_{\mu\leq 1+\epsilon}$ to denote the mapping torus of the adapted model for the $k$-th iterate of Dehn twists with the associated stable Hamiltonian structure (note that we only specified what $G_1(\mu)$ is for $\mu \leq 1+\epsilon$ ).

\begin{remark}\label{rmk_hamiltonian}
For even iterations of Dehn twists, it will sometimes be convenient to describe the stable Hamiltonian structure in the following way.  Let $k$ be even. We break the manifold $B^k_{\mu \leq 1+\epsilon}$ into two pieces.
On the first piece, the stable Hamiltonian structure on $S^1 \times (D^*_{\mu\leq 1+\epsilon}S^n\setminus T(0))$ is given by $(d\alpha, d (\theta_{T^*S^n} - kG_1(\mu)d\alpha))$. On the second piece, we consider the stable Hamiltonian structure on $S^1 \times D^*_{\mu\leq \epsilon}S^n$ given by $(d\alpha, d(\theta_{T^*S^n} +H(\mu)d\alpha))$ where 
\[
H(\mu) = \frac{k\mu}{2} - kG_1(\mu)
\]
which has zero slope at $\mu=0$. The second piece is identified with the first piece via the gluing map
\[
(\mu, y, \alpha)\mapsto (\mu, \psi^{k\pi}(y), \alpha)
\]
where $\psi^{k\pi}$ is the time-$k\pi$ map of the Reeb flow on $S^*S^n$ (the contact form being the restriction of $\theta_{T^*S^n}$). It follows that the gluing map pulls \[\theta_{T^*S^n} - kG_1(\mu)d\alpha = \mu \lambda_{S^*S^n} - kG_1(\mu)d\alpha\] back to 
\[
\mu (\lambda_{S^*S^n}+\frac{k}{2}d\alpha) - kG_1(\mu)d\alpha = \mu \lambda_{S^*S^n} +H(\mu)d\alpha.
\]

\subsection{Iterated Dehn twists on $M$} \label{sec:iterated_Dehn_twist_on_M}

We now extend the above (adapted) model Dehn twists to the entire manifold $M$. To be concrete, we will describe a stable Hamiltonian structure on the mapping torus of such Dehn twists. We fix $k \in \mathbb{Z}_{>0}$ as the number of iterations.

Recall we have an inclusion $\iota: (D^*S^n,\theta_{T^*S^n})\rightarrow (M^{2n},\theta_M)$ so that $\iota^*\theta_M = \theta_{T^*S^n}$. For any $k\in \mathbb{Z}_+$, we choose $f_1:M^{2n}\setminus \iota(D^*_{\mu\leq 1-\epsilon}S^n)\rightarrow \mathbb{R}$ so that near the boundary of $\iota(D^*S^n)$, $kf_1$ agrees with $-kG_1(\mu)$ and $kf_1$ extends to a $C^2$ small Morse function in the rest of $M^{2n}\setminus \iota(D^*S^n)$.

Given $\epsilon>0$,
we will now construct a stable Hamiltonian structure on $S^1\times (M^{2n}\setminus \iota(D^*
_{\mu\leq 1-\epsilon}S^n) )$ which we will informally refer to as the \textit{Morse region}. We use $\alpha$ to denote the $S^1$ coordinate, then the stable Hamiltonian structure is simply given by
\[
(d\alpha, d(\theta_M + kf_1d\alpha)).
\]
The Reeb flow of this stable Hamiltonian structure recovers the gradient flow of $f$ (with an appropriate choice of sign convention) over $S^1\times (M^{2n}\setminus \iota(D^*_{\le 1-\epsilon}S^n))$.

Next, we glue the stable Hamiltonian structure over $S^1\times (M^{2n}\setminus \iota(D^*_{\mu\leq 1-\epsilon}S^n) )$ to a subset of the stable Hamiltonian structure on $B^k$.

To be precise, on $B^k$ we use $B^k_{\mu\leq 1+\epsilon}$ to mean that for each fiber of $B^k$ (which is identified with $T^*S^{n}$) we only include covectors of length $\leq 1+\epsilon$, and further use $B^{k}_{0<\mu\leq 1+\epsilon}$ to indicate that we remove the zero section. We will refer to $B^k_{\mu\leq 1+\epsilon}$ as the \textit{Dehn twist region}.

We use $\Phi_0$ to identify  $B^k_{0<\mu\leq 1+\epsilon}$ with the set
\[
S^1 \times (T^*S^n\setminus T(0))
\]
with the stable Hamiltonian structure $(d\alpha, d(\theta_{T^*S^n} -kG_1(\mu)d\alpha))$.

The map $\iota$ identifies the boundary of the Morse region and the Dehn twist region, and the two stable Hamiltonian structures glue to a single stable Hamiltonian structure over this identification.

We shall use the notation $SHS(M^{2n}, kG_1,kf_1)$ to denote the above data of both the underlying manifold, i.e., the mapping torus of the Dehn twist, as well as the stable Hamiltonian structure we described above. We will denote the mapping torus with the associated stable Hamiltonian structure, viewed as a bundle over $S^1$, by $\mathcal{E}^k$. We note that by construction, there is a global 1-form on $\mathcal{E}^k$, equal to $\theta_{T^*S^n}-kG_1(\mu)d\alpha$ on $B^k_{0<\mu\le 1+\epsilon}$ (extending smoothly to the zero section) and equal to $\theta_M +kf_1d\alpha$ in the Morse region. We denote this global 1-form by $\alpha^k$.  This is essentially the stable Hamiltonian structure that we ultimately want to work with, except we shall later make small perturbations to $\alpha^k$ so that all fixed points are nondegenerate.

\begin{remark}\label{rmk:mapping_torus_charts}
What we have described can be phrased in the following way. There is a symplectomorphism $\tau_{kG_1,f_k}$ that acts as the Dehn twist $\tau_{kG_1}$ in $\iota(D_{\mu\leq 1}S^n)$ and acts as the Hamiltonian flow of $kf_1$ on the complement. Let $Y_{\tau_{kG_1,f_k}}$ denote the mapping torus with the naturally associated stable Hamiltonian structure. We can cover this mapping torus (viewed as a stable Hamiltonian manifold) using two charts, one chart for the Dehn twist region as
\[
(B^k_{\mu\leq 1+\epsilon}, (\alpha, d\alpha^k))
\]
and the other, the Morse region, as
\[
(S^1\times (M^{2n}\setminus \iota(D^*
_{\mu\leq 1-\epsilon}S^n)), (\alpha, d\alpha^k)).
\]
\end{remark}

\subsection{Cobordisms of Dehn twists}\label{sec:cobordism_of_Dehn_twists}

For the purpose of describing product operations on fixed point Floer cohomology of Dehn twists, we need to construct a certain symplectic cobordism between mapping tori of Dehn twists with the properties we shall describe below.

Let $B_3 = S^2 \setminus \{0,1, \infty\}$ denote the thrice-punctured Riemann sphere, equipped with the following data: a choice of negative cylindrical coordinates $(s_0,t_0)\in (-\infty,0]\times S^1$ at $z=0$, $(s_1,t_1)\in (-\infty,0] \times S^1$ at  $z=1$, and a choice of positive cylindrical coordinates $(s_\infty,t_\infty)\in [0,\infty) \times S^1$ at $z = \infty$.
For every pair of positive integers $m, n$, we choose a smooth map $g_{m, n}:B_3\to S^1$ such that near the three punctures, $g_{m,n}$ is described by $t_1\mapsto mt_1$, $t_2\mapsto nt_2$ and $t_\infty \mapsto (m+n)t_\infty$. We will denote by $E^{m, n}$ the pullback bundle $g_{m, n}^*\mathcal{E}^1$.

Notice that $E^{m,n}$ is a locally Hamiltonian fibration over $B_3$ with the following properties:

\begin{enumerate}
\item We equip the base $B_3$ with the symplectic form $\omega_{B_3}$ that takes the form $Kds_i\wedge dt_i$ for coordinates $(s_i,t_i)$ near any of the punctures, where $K$ is a sufficiently large constant (which depends on $m$ and $n$). Then, by adding the symplectic form above to the vertical 2-form, $E^{m,n}$ becomes a symplectic manifold.  In a neighborhood of the the punctures $z=0,1,\infty$, the bundle $E^{m,n}$ is the symplectization of the mapping torus with Dehn twists of order $m,n$ and $m+n$ respectively. For instance, near $z=\infty$, the bundle $E^{m,n}$ takes the form
\[
[0,\infty)\times \mathcal{E}^{m+n}
\] 
with the symplectic form given by $Kds_\infty \wedge dt_\infty + d\alpha ^{m+n}$. In these coordinates, the stable Hamiltonian structure on $\{0\}\times \mathcal{E}^{m+n}$ is the pair $(dt_\infty, d\alpha^{m+n})$.

\item There is a globally defined $1$-form $\lambda^{m,n} = g_{m, n}^*\alpha^1$ on $E^{m,n}$, such that $d\lambda^{m,n}$ is the vertical 2-form of the locally Hamiltonian fibration. Moreover, over the punctures, by viewing $E^{m,n}$ as the symplectization of the stable Hamiltonian structure, $\lambda^{m,n}$ agrees with primitive of the vertical two form constructed in the previous section. For example near the $z=\infty$ puncture, after identifying $E^{m,n}$ with $[0,\infty)\times \mathcal{E}^{m+n}$ as above, we have $\lambda ^{m,n} = \alpha^{m+n}$. A similar relationship holds in the other punctures.

\item Let $\pi^{m,n}$ denote the projection $E^{m,n}\rightarrow B_3$, then the symplectic form on $E^{m,n}$ is $\pi^{m,n *} \omega_{B_3} + d\lambda^{m,n}$.
\end{enumerate}

Just as in the case of $\mathcal{E}^k$, we decompose the symplectic manifold $E^{m,n}$ into two pieces glued along a common boundary.

Note that $\mathcal{E}^1$ decomposes as $B^1_{\mu\leq 1+\epsilon} \cup (S^1\times (M\setminus \iota D_{\mu \leq 1-\epsilon}^*S^n))$. Then the first piece of $E^{m,n}$ is $g_{m,n}^*B^1_{\mu\leq 1+\epsilon}$, with the vertical 2-form being the restriction of $d\lambda^{m,n}$. We shall call this piece the \textit{Dehn twist region}. Note that in this Dehn twist region, using the map $\Phi$, we can identify a subset of the Dehn twist region given by $g_{m,n}^*B^1_{0<\mu\leq 1+\epsilon}$. Topologically, this is $B_3 \times (D^*S^n\setminus T(0))$, where $\lambda^{m,n}$ restricts to $
\theta_{T^*S^n}-G_1(\mu) dg_{m,n}$.

The second piece is simply $g_{m,n}^*(S^1\times (M\setminus \iota D_{\mu\leq 1-\epsilon}^*S^n))$. Topologically, this is just
\[
B_3 \times (M\setminus \iota D^*_{\mu \leq 1-\epsilon}S^n)
\]
where $\lambda^{m,n}$ restricts to $\theta_M +f_1 dg_{m,n}$. We shall call this second piece the \textit{Morse region} of $E^{m,n}$.

\end{remark}

\subsection{Review of fixed point Floer cohomology}

We shall first give a general review of fixed point Floer cohomology and its associated structures, then we shall describe how to adapt it to the setting of the stable Hamiltonian structures we have constructed above.

Recall that we let $(M,d\theta_M)$ be a Liouville domain. Let $\phi: M\to M$ be a  symplectomorphism supported away from the boundary. The boundary $\partial M$ of $M$ is contact and we fix a collar neighborhood of $\partial M$ as in Remark \ref{rmk:H_near_boundary}.
We make a small Hamiltonian perturbation so that 
$\phi$ is nondegenerate, that is, for every fixed point $x$ of $\phi$, the linearization $d\phi_x$ does not have $1$ as an eigenvalue. The perturbation should take the form as specified in Remark \ref{rmk:H_near_boundary} near the boundary so that there are no fixed points near the boundary.

The fixed point Floer cohomology is the cohomology of the cochain complex ($CF^*(M, \phi)$, $d$), whose underlying module is generated over\footnote{We may also use $\mathbb{C}$ or $\mathbb{Z}$ coefficients, but for this quick introduction we stick with $\ZZ_2$ coefficients for simplicity.} $\ZZ_2$ by all fixed points of $\phi$. 

The differential of $CF^*(M, \phi)$ is defined by counting $J$-holomorphic cylinders in the symplectization of the mapping torus of $\phi$. More precisely, for any symplectomorphism $\phi:M\to M$, the mapping torus $Y_\phi$ is defined as
\begin{equation}
    Y_\phi = [0,1]_t\times M/((1,p)\sim(0,\phi(p))).
\end{equation}
The mapping torus comes with a projection $\pi: Y_\phi\to S^1_t$, and the symplectic form $d\theta_M$ induces a closed 2-form $\omega_\phi\in\Omega^2(Y_\phi)$ which restricts to $d\theta_M$ on each fiber. The vector field $\partial_t$ on $[0,1]_t\times \Sigma$ descends to a vector field on $Y_\phi$, which we still denote by $\partial_t$. Notice that there is a one-to-one correspondence between fixed points of $\phi$ and closed orbits of $\partial_t$ that cover $S^1_t$ once. We will denote by $\gamma_x$ the closed orbit associated to a fixed point $x$.

The projection $\pi$ now extends to $\mathbb{R}\times Y_\phi\to \mathbb{R}\times S^1$, and the fiberwise symplectic form $\omega_\phi$ extends to the symplectization as well. The symplectization of $Y_\phi$ is the manifold $\mathbb{R}_s \times Y_\phi$ together with the symplectic form $ds\wedge dt+\omega_\phi$. Choose a compatible almost complex structure (see Definition \ref{defn:boundary_requirement_J}: this choice of $J$ also prevents $J$-holomorphic curves from escaping through the boundary) on the mapping torus that is regular\footnote{This means all moduli spaces of $J$-holomorphic cylinders we care about are transversely cut out: this holds for a generic choice of $J$.}. Given two fixed points $x$, $y$ of $\phi$, we define the moduli space $\mathcal{M}^J_{x,y}$ to be
\begin{equation}
\mathcal{M}^J_{x,y}:=\{ u: \mathbb{R}_s \times S^1_t \to \mathbb{R}\times Y_\phi \; | \; \pi \circ u = id; \; \partial_s u+J\partial_t u=0; \lim_{s\to \infty} u(s,\cdot) =\gamma_x,\lim_{s\to -\infty} u(s,\cdot) =\gamma_y \}.
\end{equation}

Now if $\phi$ is exact (which will be the case for Dehn twists), the differential\footnote{Notice the differential goes the \emph{opposite} way as our previous paper, since we are now describing \emph{cohomology}.} on $CF^*(M, \phi)$ is defined by
\begin{equation}\label{count2}
    \langle d x,y \rangle := \#_{\ZZ_2} (\mathcal{M}_{y,x}/\mathbb{R}).
\end{equation}

Under the exactness assumption, for a regular almost complex structure $J$ on $\RR \times Y_{\phi}$, the set of Fredholm index one $J$-holomorphic sections (modulo the natural $\mathbb{R}$-action by translation) is a compact $0$-dimensional manifold, and $\#_{\ZZ_2} (\mathcal{M}^J_{y,x}/\mathbb{R})$ denotes the mod-2 count of points in the moduli space. If $J$ is regular then also $d^2=0$, and we will denote by $HF^*(M, \phi)$ the cohomology of the complex $(CF^*(M, \phi),d)$. The cohomology $HF^*(M,\phi)$ is invariant under (compactly supported) Hamiltonian isotopies of $\phi$ and independent of choice of (regular) $J$, see for example \cite{seidel_thesis}. 

Suppose $\phi: M\rightarrow M$ is an exact symplectomorphism, and assume there is an exact symplectic fiber bundle $(X,\pi_X, \omega_X)$ over the thrice-punctured sphere $B_3$, which, near the three punctures, is symplectomorphic to $[0,-\infty)\times Y_{\phi^n}$, $[0,-\infty)\times Y_{\phi^m}$ and $(+\infty, 0]\times Y_{\phi^{n+m}}$ respectively. We have already given a  careful construction of this bundle for the case of Dehn twists in the preceding sections.

For now, let us observe that, such a bundle induces a map\footnote{Generally one has to use Novikov coefficients to deal with the possibility of there being infinitely many nontrivial $J$-holomorphic sections. However this is not really relevant for our purposes: see Section \ref{sec:prod_str_Dehn_twists} for details.}
\begin{equation}
    \bullet: HF^*(M, \phi^n)\otimes HF^*(M, \phi^m)\longrightarrow HF^*(M, \phi^{n+m})
\end{equation}
In fact, this actually works for any two symplectomorphisms $\psi, \phi$:
\begin{equation*}
    \bullet: HF^*(M, \psi)\otimes HF^*(M, \phi)\longrightarrow HF^*(M, \psi \circ \phi)
\end{equation*}
This is what we call the (pair-of-pants) \emph{product structure} on the fixed point Floer cohomology \footnote{This corresponds to the \textit{coproduct} structure for the fixed point Floer \textit{homology} in our earlier paper \cite{yao-zhao}.}.
It is defined by counting rigid pseudo-holomorphic sections of the bundle $X\to B_3$ with appropriate asymptotics. Namely, if $x$, $y$, $z$ are fixed points of $\phi$, $\psi$ and $\psi\circ\phi$ respectively, and $J$ is a regular tame almost complex structure (see the precise Definition \ref{defn:tame_for_cobordism}), then the moduli space $\mathcal{M}^J_{x,y;z}$ is defined by
\begin{equation}
    \mathcal{M}^J_{x,y;z} =\left\lbrace u: B_3\to X \;\middle|\;
  \begin{tabular}{@{}l@{}}
    $\pi_X\circ u$ is degree 1,\ $u$ is $J$-holomorphic, and\\
    $u$ is asymptotic to $\gamma_x$, $\gamma_y$ at its negative punctures and \\ $\gamma_z$ over its positive puncture.
   \end{tabular}
  \right\rbrace
\end{equation}
and when it has index zero this moduli space is a compact $0$-dimensional manifold.

The product on the chain level is now defined as:
\begin{equation}\label{cobordismcount}
    \langle x\bullet y, z\rangle = \#_{\ZZ_2}\mathcal{M}^J_{x,y;z}
\end{equation}
where $\#_{\ZZ_2}\mathcal{M}^J_{x,y;z}$ denotes the mod-$2$ count of Fredholm index $0$ sections.\footnote{Of course we can do $\mathbb{Z}$ counts if we choose orientations/use a system of coherent orientations as we discussed in \cite{yao-zhao} and \cite{JYZ}}

This `product' on cohomology is (graded) commutative and associative, and independent of choice of regular almost complex structure and (compactly supported) Hamiltonian isotopy classes of $\phi$.\footnote{That this product structure has these stated properties is a folklore result that follows from standard techniques: our results in the present work do not rely on them.}

\subsubsection{The case of Dehn twists: The differential}

We now give very concrete descriptions of the geometric data we need to compute fixed point Floer cohomology of Dehn twists and the associated product. We will heavily use the stable Hamiltonian structures constructed in previous sections.

Let $k\geq1$ and consider the bundle $\mathcal{E}^k \rightarrow S^1$ constructed in Section \ref{sec:iterated_Dehn_twist_on_M}, with the associated stable Hamiltonian structure $(\alpha, d\alpha^k)$. This is identified with the mapping torus in Remark \ref{rmk:mapping_torus_charts}. This is essentially already the stable Hamiltonian structure that we want, except that the Reeb orbits of this Hamiltonian flow are Morse-Bott. Here, we perform a small perturbation of the stable Hamiltonian structure to make the time-$1$ Reeb orbits nondegenerate. The perturbation and the associated Reeb orbits are described below.

We recall that $\mathcal{E}^k$ is split into the Morse region and the Dehn twist region, glued along a common boundary. Over the Morse region, the manifold is $S^1 \times (M\setminus \iota(D^*_{\mu<1-\epsilon}S^n))$ with the stable Hamiltonian structure $(\alpha, d(\theta_M +kf_1 d\alpha))$. We will later fix $0<\epsilon_G, \epsilon \ll 1 $ so that for $z\in M\setminus \iota(D^*_{\mu<1+\epsilon}S^n)$, $f_1$ is a $C^2$-small Morse function whose value is between $\frac{1}{2}\epsilon_G \epsilon^2$ and $\frac{3}{2} \epsilon_G\epsilon^2$.

Next, we examine what happens in the Dehn twist region. We split into two cases of $k$ even or odd.

\begin{enumerate}
\item In the case where $k$ is even, we can describe the Dehn twist region as $S^1\times D^*_{\mu\leq 1+\epsilon}S^n$ with stable Hamiltonian structure $(d\alpha, d\theta_{T^*S^n} +H(\mu)d\alpha)$, where $H(\mu) = k\mu/2-kG_1(\mu)$.

The Morse-Bott family of Reeb orbits can be described below:

\begin{itemize}
\item For $\mu=0$, we have an $S^n$ family of Reeb orbits
\item For each $0<\ell \leq k$, we find $\mu_{\ell}$ so that $H'(\mu_{\ell}) =\ell$. Then the full $\mu=\mu_{\ell}$ locus is a Morse-Bott family of Reeb orbits.
\end{itemize}

Fix $\delta\ll 1$: we make a size-$\delta$ Morse-Bott perturbation supported in $\mu \in [0,\delta]$ and $\mu \in [\mu_{\ell}-\delta, \mu_{\ell}+\delta]$ by modifying $G_1$ to $G_1 +\delta F$. Here $F$ depends on $\alpha$ (it is a time-dependent modification). 

To be more precise, on the $\mu=0$ locus, the $S^n$ family of fixed points can be perturbed to be nondegenerate by choosing a $C^2$-small perfect Morse function on $S^n$. For $\mu=\mu_{\ell}$, we can think of the perturbation as happening in two steps. First, we have that the $\mu=\mu_{\ell}$ locus is foliated by Reeb orbits of equal period, and we can view this as an $S^1$ bundle over a base manifold $Q_n$, where the $S^1$ fibers are the Reeb orbits. We choose a Morse function $f_Q$ on $Q_n$ and perturb by this Morse function (this is a time independent perturbation) so that the remaining Reeb orbits correspond to critical points of $f_Q$. Then each Reeb orbit corresponds to an $S^1$ family of Hamiltonian orbits. We choose a time dependent function $h(t)$ so that each Reeb orbit is perturbed to two distinct Hamiltonian orbits. This time-dependent perturbation corresponds to modifying $\alpha^k$ by $\delta h(\theta-\ell\alpha)d\alpha$ multiplied by some cut off function.

\item For the case of $k$ odd, the periodic orbits are all contained in the chart $S^1\times D^*_{\mu\leq 1+\epsilon}S^n\setminus T(0)$ with the stable Hamiltonian structure $(\alpha, d(\theta_{T^*S^n} -kG_1(\mu)d\alpha))$. The orbits do not appear at the zero section, they instead appear at $\mu_{\ell}$ for which $-kG_1'(\mu_{\ell})=\ell$. We use a similar perturbation method to achieve nondegeneracy at these orbits as well.

\end{enumerate}
With the above stable Hamiltonian structure chosen, we shall denote the Morse-Bott perturbed global 1-form as $\alpha^k_\delta$. We next specify which almost complex structures on $\mathbb{R} \times \mathcal{E}^k$ we want to use.
We begin with the observation that $\mathbb{R} \times \mathcal{E}^k$ are examples of \textit{locally Hamiltonian fibrations}, as defined in \cite{McDuff_Salamon}.

\begin{definition}
    The vertical distribution of a locally Hamiltonian fibration $\pi: E\to B$ with fiberwise symplectic form $\Omega$, is the sub-bundle $\text{ker } d\pi$ of $TE$. The horizontal distribution is the sub-bundle of $TE$ which is the $\Omega$-orthogonal complement of the vertical distribution.
\end{definition}

\begin{definition} \label{defn:boundary_requirement_J}
A compatible almost complex structure $J$ on $\mathbb{R}_s \times \mathcal{E}^k$ satisfies the following:
\begin{enumerate}
    \item $J$ is invariant under $\mathbb{R}$-translation;
    \item $J$ sends $\partial_s$ to the Reeb vector field of the stable Hamiltonian structure of $\mathcal{E}^k$; 
    \item $J$ preserves the vertical distribution $\Ver_k$ of the bundle $\mathcal{E}^k\to S^1$ and is compatible with the symplectic form on $\Ver_k$, and the projection to the base $\mathbb{R}\times S^1$ is holomorphic;
    \item $J$ is compatible with the symplectization structure on a collar neighborhood of the boundary of the fibers.
\end{enumerate}
\end{definition}

The special requirement (4) for the almost complex structure near the boundary of the fibers can be formulated precisely as follows: we observe that near the boundary the locally Hamiltonian fibration is trivial, which means we can write it as the product of a collar neighborhood of the boundary with the base. The boundary of $M$, $P=\partial M$ is a contact manifold, with contact form $\lambda_P$ and contact structure $\xi_P$. Since $M$ is a Liouville domain, there exists a collar neighborhood of $\partial M$ exact symplectomorphic to the symplectization $(\epsilon_b/2, \epsilon_b]_\mu\times P$ of the contact manifold $P = \partial M$. We choose an almost complex structure $J_0$ on $M$ such that near the boundary, $J_0$ is compatible with the described symplectization. In particular, $J_0$ preserves $\xi_P$ and sends $\partial_\mu$ to the Reeb vector field $R_P$ of $(P, \lambda_P)$. The almost complex structure $J$ is chosen to be a compatible almost complex structure that restricts to $J_0$ on the fiber.

\begin{remark} \label{rmk:H_near_boundary}
    In each connected component of the neighborhood $(\epsilon_b/2, \epsilon_b)_{\mu_i}\times P_i$ of the boundary of $M$ as described above, the Morse function $f_1$ takes the form $\theta_i\mu_i$, where $\theta_i$ is a small irrational number.  
\end{remark}

\begin{definition}\label{defn:confinement type acs}
    We say an almost complex structure $J_0$ on $M$ is of \textbf{confinement type}, if it satisfies requirement (4) near $\partial M$ as in Definition \ref{defn:boundary_requirement_J}, and has the following form in a neighborhood of $\iota (S^*S^n$): 
    
    View the neighborhood as $(1-\epsilon, 1+\epsilon)_\mu \times S^*S^n_y$: we require that in the region $((1-\epsilon, 1-\delta)\cup (1+\delta, 1+\epsilon))\times S^*S^n$, the almost complex structure $J_0$ is compatible with this symplectization\footnote{We restrict to these intervals because $(1-\delta,1+\delta)$ is the support of a perturbation that breaks the Morse-Bott degeneracy.}: that is, $J_0$ preserves the contact structure of $S^*S^n$ (the contact form is just the restriction of the Liouville form on $M$); is invariant in the $\mu$ direction on $((1-\epsilon, 1-\delta)\cup (1+\delta, 1+\epsilon)) \times S^{\ast} S^n$; and sends $\partial_\mu$ to the Reeb vector field on $S^*S^n$. 
    
    We say a compatible almost complex structure $J$ on $\mathbb{R}\times \mathcal{E}^k$ is of \textbf{confinement type} if it is the (unique) lift of a confinement-type almost complex structure $J_0$ on $M$.
\end{definition}

In the above we have implicitly used the following: we note that $\mathbb{R}_s\times \mathcal{E}^k \rightarrow \mathbb{R}_s \times S^1$ is a locally Hamiltonian fibration, where we fix the standard complex structure on the base $\mathbb{R}\times S^1$. We can uniquely specify an almost complex structure on $J$ on $\mathbb{R}_s\times \mathcal{E}^k$ by specifying an $s$-independent family of almost complex structures on the vertical bundle. To be more explicit, let $(s,t) \in [-1,1]^2$ denote local coordinates on the base: then in local coordinates the  fibration looks like the projection
\[
[-1,1]^2\times M \rightarrow [-1,1]^2
\]

and once we specify $J_0(s,t,m)$ (for $m\in M$) a family of almost complex structures on $M$, the lift of this family of almost complex structures is given by
\[
J=\begin{pmatrix}
j &0\\
* & J_0(s,t,m)
\end{pmatrix}
\]
Here $*$ is defined by the following: let $\partial_t^\#, \partial_s^\#$ denote the horizontal lifts of $\partial_t,\partial_s$ respectively: then we have $J\partial_s^\# = \partial_t^\#$.

From this, it follows that the required almost complex structures of confinement type exist in great abundance and for the differential we can choose a regular $J$ of confinement type.

We now define the vertical energy of a section of $\mathbb{R}\times \mathcal{E}^k\rightarrow \mathbb{R}\times S^1$.

\begin{definition}
Fix any compatible almost complex structure $J$ on $\mathbb{R}\times \mathcal{E}^k$.
Let $u:\mathbb{R}\times S^1 \rightarrow \mathbb{R}\times\mathcal{E}^k$ be a section of $\mathbb{R}\times \mathcal{E}^k\rightarrow \mathbb{R}\times S^1$. For any vector $v$ in $T(\mathbb{R}\times S^1)$, we denote by $v^\#$ its horizontal lift\footnote{i.e. the only vector in the horizontal distribution that projects to $v$}. The \textbf{vertical energy} of $u$ is then defined as
\[
E(u) = \frac{1}{2}\int_{\mathbb{R}\times S^1}|\partial_s u-\partial_s^\#|^2_{g_J}+|\partial_t u-\partial_t^\#|^2_{g_J} ds\wedge dt,
\]
where $g_J$ is the Riemannian metric on the vertical distribution induced by $J$ and $d\alpha^k_\delta$.
\end{definition}
\begin{remark}
    With the above notation, the curvature form $R(u, v)$ is defined as 
    \[
    R(u, v) = -d\alpha_\delta^k(u^\#, v^\#).
    \]
\end{remark}

\begin{proposition}
Fix any compatible almost complex structure $J$ on $\RR \times \mathcal{E}^k$. Suppose the section $u:\mathbb{R}\times S^1 \rightarrow \mathbb{R} \times \mathcal{E}^k$ is $J$-holomorphic: then its vertical energy is given by
\[
E(u) = \int_{\mathbb{R}\times S^1}u^*d\alpha_\delta^k + \int_{\mathbb{R}\times S^1} R(u)(\partial_s,\partial_t)ds \; dt
\]
\end{proposition}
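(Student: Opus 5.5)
The plan is a pointwise computation in local trivializations, followed by integration. The idea is to split $du$ into horizontal and vertical parts and expand $u^*d\alpha^k_\delta$ accordingly.

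\textbf{Step 1: decompose $du$.} Since $u$ is a section, at each point of $\mathbb{R}\times S^1$ we can write
\[
\partial_s u = \partial_s^\# + v_s,\qquad \partial_t u = \partial_t^\# + v_t,
\]
where $v_s = \partial_s u - \partial_s^\#$ and $v_t = \partial_t u-\partial_t^\#$ lie in the vertical distribution $\Ver_k$. Here $\partial_s^\#,\partial_t^\#$ are the horizontal lifts at the point $u(s,t)$. By definition the horizontal distribution is the $d\alpha^k_\delta$-orthogonal complement of $\Ver_k$.

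\textbf{Step 2: use $J$-holomorphicity.} The section $u$ is $J$-holomorphic, the projection to $\mathbb{R}\times S^1$ is holomorphic, and $J\partial_s^\#=\partial_t^\#$ by construction of the lift. Hence
\[
0 = \partial_s u + J\partial_t u = (\partial_s^\# + J\partial_t^\#) + (v_s + J v_t).
\]
The first bracket vanishes, because $J\partial_t^\# = J^2\partial_s^\# = -\partial_s^\#$. This leaves $v_s + Jv_t = 0$ with $J$ restricted to $\Ver_k$, which is the step that uses that $J$ preserves $\Ver_k$. The vertical part therefore satisfies the holomorphic equation for the fiberwise compatible structure. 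Consequently, with $g_J = d\alpha^k_\delta(\cdot, J\cdot)$ on $\Ver_k$,
\[
\tfrac12\left(|v_s|^2+|v_t|^2\right) = |v_s|^2 = d\alpha^k_\delta(v_s, Jv_s) = d\alpha^k_\delta(v_s, v_t).
\]

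\textbf{Step 3: expand the pullback.} By bilinearity,
\[
u^*d\alpha^k_\delta(\partial_s,\partial_t) = d\alpha^k_\delta(\partial_s^\#+v_s, \partial_t^\#+v_t).
\]
The two cross terms $d\alpha^k_\delta(\partial_s^\#, v_t)$ and $d\alpha^k_\delta(v_s,\partial_t^\#)$ vanish, because horizontal vectors are $d\alpha^k_\delta$-orthogonal to vertical ones. The remaining terms give
\[
u^*d\alpha^k_\delta(\partial_s,\partial_t) = d\alpha^k_\delta(v_s,v_t) + d\alpha^k_\delta(\partial_s^\#,\partial_t^\#) = \text{(energy density)} - R(u)(\partial_s,\partial_t).
\]
The last equality uses the sign convention $R(u,v) = -d\alpha^k_\delta(u^\#,v^\#)$. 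Rearranging pointwise and integrating over $\mathbb{R}\times S^1$ yields the claimed formula.

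\textbf{Step 4: the main obstacle.} The only real point requiring care is sign and convention bookkeeping. I would check the following:
\begin{itemize}
\item the orientation $ds\wedge dt$ agrees with the one used to integrate $u^*d\alpha^k_\delta$;
\item $g_J$ is $d\alpha^k_\delta(\cdot,J\cdot)$ rather than its negative;
\item the curvature sign convention matches the one in the preceding remark.
\end{itemize}
I would also confirm that $d\alpha^k_\delta$ genuinely restricts to the vertical symplectic form, so that the horizontal distribution is well defined even though $d\alpha^k_\delta$ is only closed on the total space and not the full symplectic form.

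Integrability needs no separate argument. The curvature integrand is supported where the fibration is nontrivial or perturbed, and both integrals converge for finite-energy solutions, since the curvature vanishes away from the perturbation support near the ends. If integrability were an issue, I would state the identity pointwise and integrate over compact exhaustions.
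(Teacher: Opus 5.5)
Your proposal is correct and matches the paper, which proves this only by citing Lemma 8.2.9 of McDuff--Salamon; your pointwise splitting of $du$ into horizontal and vertical parts is exactly the standard proof of that lemma, using $J\partial_s^\#=\partial_t^\#$, $J$-invariance of $\Ver_k$, and $d\alpha^k_\delta$-orthogonality of the two distributions. One small correction to your integrability aside: on $\mathbb{R}\times\mathcal{E}^k$ everything is $s$-invariant, so the curvature vanishes identically (the paper's next proposition) and there is no perturbation region localized near the ends; this does not affect the pointwise identity, which is the real content.
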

\begin{proof}
See Lemma 8.2.9 of \cite{McDuff_Salamon}.
\end{proof}

\begin{proposition}
\label{prop: vanishing curvature for E^k}
For the vertical 2-form given by $d\alpha^k_\delta$ on $\mathbb{R}\times \mathcal{E}^k$, the curvature 2-form $R$ vanishes identically.
\end{proposition}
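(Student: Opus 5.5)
The plan is to compute the curvature directly from its definition, $R(u,v) = -d\alpha^k_\delta(u^\#,v^\#)$, using the product structure of the base. The fibration $\mathbb{R}_s\times\mathcal{E}^k\to\mathbb{R}_s\times S^1$ is pulled back from $\mathcal{E}^k\to S^1$ along the projection $(s,t)\mapsto t$. The vertical two-form $d\alpha^k_\delta$ is also pulled back from $\mathcal{E}^k$, so it has no $ds$ component and is $s$-invariant. Since the base is two-dimensional, it suffices to show that $d\alpha^k_\delta(\partial_s^\#,\partial_t^\#)=0$ at every point.

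First I identify the horizontal lifts. Because the two-form does not involve $s$ and annihilates the $\mathbb{R}_s$ direction, the vector $\partial_s$ on $\mathbb{R}\times\mathcal{E}^k$ is horizontal: it is $d\alpha^k_\delta$-orthogonal to every vertical vector, since $\iota_{\partial_s}d\alpha^k_\delta=0$. It also projects to $\partial_s$, so $\partial_s^\#=\partial_s$.

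Next, $\partial_t^\#$ is the unique vector projecting to $\partial_t$ whose contraction with $d\alpha^k_\delta$ vanishes on the vertical distribution. On $\mathcal{E}^k$ this is exactly the Reeb vector field $R$ of the stable Hamiltonian structure $(d\alpha, d\alpha^k_\delta)$, which satisfies $\iota_R d\alpha^k_\delta=0$ and $d\alpha(R)=1$. This holds after the $\delta$-perturbation as well: the perturbation only changes $\alpha^k_\delta$ by terms of the form $\delta F\,d\alpha$ and $\delta h\,d\alpha$ (times cutoffs), so $d\alpha^k_\delta$ still has a one-dimensional kernel transverse to the fibers. Concretely, if locally $\alpha^k_\delta=\theta+K\,d\alpha$ with $\theta$ fiberwise and $K$ a function, then $d\alpha^k_\delta=d_v\theta+(\ldots)\wedge d\alpha$, and its kernel is spanned by $\partial_\alpha - X_K$. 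In particular $d\alpha^k_\delta$ has rank $2n$ on the $(2n+1)$-dimensional mapping torus.

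Then $R(\partial_s,\partial_t)=-d\alpha^k_\delta(\partial_s,R)=0$. Indeed, $R$ lies in the kernel of $d\alpha^k_\delta$, and $\partial_s$ is also in the kernel. This holds in both charts (Dehn twist region and Morse region), and across the gluing, since $\alpha^k_\delta$ is a global one-form.

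The only point needing care, which I expect to be the main obstacle, is checking that the kernel of $d\alpha^k_\delta$ restricted to $\mathcal{E}^k$ really is transverse to the vertical distribution and is one-dimensional everywhere. Otherwise the horizontal lift would not equal the Reeb field. This follows because $d\alpha^k_\delta$ restricts to a symplectic form on each fiber: in each fiber it equals the standard symplectic form, since the $d\alpha$-terms vanish there. Nondegeneracy on the $2n$-dimensional vertical bundle then forces the kernel on the $(2n+1)$-manifold to be a line complementary to $\Ver_k$.
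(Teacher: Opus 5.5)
Your argument is correct and is essentially the paper's proof. The key fact is that $d\alpha^k_\delta$ is pulled back from $\mathcal{E}^k$, so $\partial_s^\# = \partial_s$ lies in its kernel and $R(\partial_s,\partial_t) = -d\alpha^k_\delta(\partial_s,\partial_t^\#) = 0$. The identification of $\partial_t^\#$ with the Reeb field, which you flagged as the main obstacle, is therefore superfluous beyond the well-definedness of the horizontal distribution. (Also, under the paper's convention $\iota_{X_H}\omega = dH$ the kernel is spanned by $\partial_\alpha + X_K$, not $\partial_\alpha - X_K$, though this sign plays no role.)
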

\begin{proof}
By our definition, the $2$-form $d\alpha_\delta^k$ is $s$-independent. On the other hand, the horizontal lift $\partial_s^\#$ is simply $\partial_s$. It follows that $\partial_s$ lies in the kernel of $R$, and hence $R(u)(\partial_s, \partial_t)$ vanishes for any $J$-holomorphic $u$. 
\end{proof}

Since the curvature vanishes, we can take the following definition of the global action functional. Suppose $\gamma$ is a time-1 periodic orbit of $\alpha^k_\delta$: then we define its action to be
\[
\mathcal{A}(\gamma):= \int_{\gamma} \alpha^k_\delta.
\]

This has the property that if $u:\mathbb{R}\times S^1\rightarrow \mathbb{R}\times \mathcal{E}^k$ is a $J$-holomorphic section positively asymptotic (i.e. $s\rightarrow+\infty$) to the 1-periodic orbit $\gamma_+$ and negatively asymptotic to the 1-periodic orbit $\gamma_-$, then
\[
E(u) = \mathcal{A}(\gamma_+) - \mathcal{A}(\gamma_-),
\]
with the property that $E(u)\geq 0$ and equality holds if and only if $u$ is a horizontal section, i.e., a trivial cylinder over a fixed periodic orbit.

With the above data specified, we first choose a regular confinement-type almost complex structure $J$ such that all Fredholm index-1 non-horizontal $J$-holomorphic sections are transversely cut out. Then we  define the differential to be
\begin{equation}\label{count}
    \langle d x,y \rangle := \#_{\ZZ_2} (\mathcal{M}_{y,x}/\mathbb{R}).
\end{equation}
We shall see below that choosing $J$ to be of confinement-type means that $J$-holomorphic curves do not escape to the boundary.
The apriori energy bound and the lack of bubbling (because $M$ is exact) means the above count is finite (and well-defined even over $\mathbb{Z}$ coefficients). The usual gluing argument shows that $d^2=0$ and the resulting fixed point Floer cohomology is independent of choice of regular confinement-type $J$.

\section{Confinement for the differential} \label{sec:confinement_differential}

We fix an almost complex structure $J$ of confinement type on $\mathbb{R}\times \mathcal{E}^k$ and prove the confinement results necessary for our splitting formula.

The main theorem of the section is the following:
\begin{theorem} \label{thm:confinement_differential}
There exists some sufficiently large $C$ so that for all sufficiently small $\delta, \epsilon, \epsilon_G >0$, we can choose Floer data (whose dependence on $\delta, \epsilon, \epsilon_{G}$ is specified in the proofs below) so that
\begin{enumerate}
\item If a $J$-holomorphic section has both punctures asymptotic to orbits in the Morse region, then it is entirely contained in the Morse region;
\item If a $J$-holomorphic section has both punctures asymptotic to orbits in the Dehn twist region, it is entirely contained in the Dehn twist region;
\item If a $J$-holomorphic section has a puncture asymptotic to an orbit in the strict Morse region, then it is entirely contained in the Morse region. If a $J$-holomorphic section has a puncture asymptotic to an orbit in the strict Dehn twist region, then it is entirely contained in the Dehn twist region.

\end{enumerate}
\end{theorem}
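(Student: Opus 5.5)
We prove the theorem by working entirely in the intersection region $S^1\times(1-\epsilon,1+\epsilon)_\mu\times S^*S^n$ and the slight extension $U_C=S^1\times[1-1/C,1+\epsilon]_\mu\times S^*S^n$. There the primitive is $\alpha^k=\theta_{T^*S^n}-kG_1(\mu)\,d\alpha=\mu\lambda_{S^*S^n}+H_k(\mu)\,d\alpha$ with $H_k=-kG_1$ convex, equal to $\tfrac12 k\epsilon_G(\mu-1)^2$ near $\mu=1$. A confinement-type $J$ is cylindrical in $\mu$ off the tiny perturbation window $(1-\delta,1+\delta)$, so on $\mathbb{R}\times U_C$ the Floer equation is literally Hamiltonian Floer theory on a symplectization with an autonomous radial Hamiltonian $H_k(\mu)$. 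We would not use the product structure of the Dehn twist elsewhere. Since the curvature vanishes (Proposition \ref{prop: vanishing curvature for E^k}), the energy is $E(u)=\mathcal{A}(\gamma_+)-\mathcal{A}(\gamma_-)$ with $\mathcal{A}(\gamma)=\int_\gamma\alpha^k_\delta$. We first tabulate actions. Orbits in the intersection region come from the Morse–Bott family at $\mu=1$ and have action $O(\delta)$. Strict Morse orbits have action $kf_1\in[\tfrac12 k\epsilon_G\epsilon^2,\tfrac32 k\epsilon_G\epsilon^2]$, up to $O(\delta)$. Strict twist orbits sit at $\mu_\ell<1-1/C$, where $H_k'(\mu_\ell)=\ell$, and have action $H_k(\mu_\ell)-\mu_\ell H_k'(\mu_\ell)$, which is bounded away from these small values independently of $\epsilon,\epsilon_G$. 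Indeed $H_k$ is independent of $\epsilon,\epsilon_G$ on $(1-1/C,1-1/C^2)$, and there are no orbits in $[1-1/C,1)$.

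\textbf{Step 1 (integrated maximum principle).} Take a level $\mu=\rho$ with $\rho\in(1+\delta,1+\epsilon)$, or with $\rho\in(1-\epsilon,1-\delta)$ or $(1-1/C,1-\delta)$, such that $\rho$ is a regular value of $\mu\circ u$. Let $S=u^{-1}\{\mu\ge\rho\}$ (or $\{\mu\le\rho\}$) be a compact subsurface avoiding the asymptotic ends. Following Abouzaid–Seidel \cite[Lemma 7.2]{abouzaid_seidel_viterbo}, write $0\le E(u|_S)=\int_{\partial S}u^*(\mu\lambda)-\int_{\partial S}H_k\,dt$. Using $J\partial_\mu=R$ and the Floer equation, the boundary term is bounded by $\rho\int_{\partial S}\lambda\circ(du-X_{H_k}\otimes dt)\cdot\ldots\le0$ plus $(\rho H_k'(\rho)-H_k(\rho))\int_{\partial S}dt$. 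The sign of the bracket is controlled by convexity and by the orientation of $\partial S$. On the side where $\rho H'-H$ has the favorable sign, this gives $E(u|_S)\le0$. Hence $S$ consists of horizontal pieces, and by unique continuation $u$ does not enter the region. This is the argument for case (1) and the outward half of the other cases: a curve with both ends in the Morse region cannot cross $\mu=1-\delta'$ inward, and one with both ends in the twist region cannot cross $\mu=1+\delta'$ outward. For exits across the outer boundary, the same argument applies near $\partial M$ using Remark \ref{rmk:H_near_boundary}.

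\textbf{Step 2 (action/energy for the remaining cases).} The case not covered by Step 1 is a curve with one end in the strict region on one side, one end in the intersection region, and a portion crossing to the other side. For this we adapt Hein's lemma \cite{Hein}: if $u$ crosses the annulus $\mu\in[1-1/C,1-1/C^2]$ (respectively $[1+\delta,1+\epsilon]$) where $J$ is cylindrical, then a monotonicity/energy lower bound forces $E(u)\ge c(C)>0$, uniform in $\epsilon,\epsilon_G,\delta$. The action table, however, says $E(u)$ is at most $O(k\epsilon_G\epsilon^2+\delta)$ in the relevant configurations. Choosing $\epsilon,\epsilon_G$ and then $\delta$ small relative to $c(C)$ gives a contradiction. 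Which action differences appear is dictated by the orientation conventions ($+\infty$ end at the output), and we would check each of the four configurations separately.

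The main obstacle is Step 2. We would need a uniform lower energy bound for curves crossing the fixed-width annulus, one that does not degenerate as $\epsilon_G\to0$. That annulus is exactly where $H$ was frozen, independent of $\epsilon,\epsilon_G$. We would also need to handle the $\delta$-perturbation window, where $J$ is not cylindrical and $\alpha^k_\delta$ is time-dependent. The plan is to prove everything in the Morse–Bott limit, where $J$ is cylindrical throughout $(1-\epsilon,1+\epsilon)$, and then pass to small $\delta$ by Gromov compactness. Curves for small $\delta$ converge to cascades, and each cascade level is confined by Steps 1–2, so the confinement persists.
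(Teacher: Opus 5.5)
There is a genuine gap, and it lies in how you assign the cases to Step 1 and Step 2.

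\textbf{Step 1 only works on the outer side.} Orient $\Gamma=u\cap\{\mu=\rho\}$ as the boundary of $u^{-1}\{\mu\le\rho\}$. For a confinement-type $J$ one has $\int_\Gamma\lambda_{S^*S^n}+H_k'(\rho)\,d\alpha\ge0$, with strict inequality if $\Gamma\neq\emptyset$; this is Lemma \ref{lem:local_energy_inequality_differential}. For a compact $S$ one has $\int_{\partial S}d\alpha=0$. So the $(\rho H_k'-H_k)$ term in your formula vanishes identically, and the sign is fixed by orientation alone:
\begin{itemize}
\item on the outer piece, $E(u|_{\{\mu\ge\rho\}})=-\rho\int_\Gamma\lambda<0$, which is a contradiction;
\item on the inner piece, $E(u|_{\{\mu\le\rho\}})=\rho\int_\Gamma\lambda>0$, which gives nothing.
\end{itemize}
Hence Step 1 proves that curves with both ends in the twist region stay in $\{\mu\le1+\epsilon\}$, but it does not prove case (1). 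Case (1) has to be done entirely by Hein's estimate on the frozen annulus $[1-1/C,1-1/C^2]$, using $E(u)=\mathcal{A}(x)-\mathcal{A}(y)\le\tfrac32k\epsilon_G\epsilon^2+O(\delta)$, as in Proposition \ref{prop_hein_differential}. You have this argument in Step 2, but you must apply it to every configuration with both ends in the Morse region. Once you do, no Hein-type bound on $[1+\delta,1+\epsilon]$ is needed, so the ``main obstacle'' you identify does not arise.

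\textbf{The mixed case is not covered.} Item (3) requires that a section with one end strictly in the twist region and the other strictly in the Morse region does not exist. The reverse direction, with the Morse orbit at the positive end, has negative energy by your action table. The other direction, with the twist orbit at the positive end, has $E(u)=\mathcal{A}(\text{twist})-\mathcal{A}(\text{Morse})$ of order one. So your claim that $E(u)=O(k\epsilon_G\epsilon^2+\delta)$ ``in the relevant configurations'' fails here. Hein's bound $c(C)$ gives no contradiction, and no compact $S$ is available.

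The missing step is to apply the local energy inequality to the non-compact outer piece $u^{-1}\{\mu\ge1+\epsilon\}$, which contains the Morse end $y$. There $\int_\Gamma d\alpha=-1$, so $\int_\Gamma\lambda\ge k\epsilon_G\epsilon$, and Stokes gives
\[
0\le E\bigl(u|_{\{\mu\ge1+\epsilon\}}\bigr)\le-(1+\epsilon)k\epsilon_G\epsilon+\tfrac12k\epsilon_G\epsilon^2-kf_1(y).
\]
This is impossible because $f_1>0$. The mechanism is that $\rho H_k'(\rho)-H_k(\rho)\approx k\epsilon_G\epsilon$ dominates $kf_1\sim k\epsilon_G\epsilon^2$. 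Your Step 1 formula contains exactly this term, but requiring $S$ to avoid the asymptotic ends kills it.

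Finally, the $\delta\to0$ cascade argument is unnecessary for the differential. The levels $\mu=1\pm\epsilon$ and the Hein annulus all lie outside $(1-\delta,1+\delta)$, where a confinement-type $J$ is already cylindrical.
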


We begin with the fact that $J$-holomorphic curves cannot escape through the boundary. 
Then we use a version of the Hein inequality \cite{Hein} to confine $J$-holomorphic curves with both ends in the Morse region. Finally, we prove a local energy inequality for $J$-holomorphic curves that intersect both the Morse region and the Dehn twist region, using which we conclude the theorem. 

\begin{proposition}
Suppose $J$ is a confinement-type almost complex structure and $u:\mathbb{R}\times S^1 \to \mathbb{R}\times \mathcal{E}^k$ is a $J$-holomorphic section. Then there is a collar neighborhood of the boundary of $M$, which we write as $(\epsilon_b/2,\epsilon_b]\times \partial M$ with coordinates $(\mu, y)$, so that if we consider the inclusion
\[
\mathbb{R}\times S^1 \times (\epsilon_b/2,\epsilon_b]\times \partial M \subset \mathbb{R}\times \mathcal{E}^k,
\]
then we have
\[
u\cap (\mathbb{R}\times S^1 \times (\epsilon_b/2,\epsilon_b]\times \partial M)  = \emptyset.
\]
\end{proposition}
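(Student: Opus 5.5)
We argue by a maximum principle applied to the radial coordinate $\mu$ on the collar. Near $\partial M$ the fibration $\mathbb{R}\times\mathcal{E}^k\to\mathbb{R}\times S^1$ is trivial, since the Dehn twist is supported away from the boundary. By Remark \ref{rmk:H_near_boundary} the global one-form there is $\theta_M+kf_1\,d\alpha$ with $f_1=\theta_i\mu_i$ on each component, and the perturbation $\delta F$ is supported near $\iota(S^*S^n)$ and near the zero section, hence vanishes on the collar. So on $\mathbb{R}\times S^1\times(\epsilon_b/2,\epsilon_b]\times P$ a section $u=(s,t,v(s,t))$ is $J$-holomorphic exactly when $v$ solves a Floer equation $\partial_s v+J_0(\partial_t v-X_{h})=0$ with $h=k\theta_i\mu$ (up to the sign conventions of \S\ref{sec:conventions}). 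Here $J_0$ is of contact type (requirement (4) of Definition \ref{defn:boundary_requirement_J}) and $h$ is linear in $\mu$, so its Hamiltonian vector field is $k\theta_i R_P$ up to sign. We first fix $\epsilon_b$ small enough that this normal form holds on the whole collar.

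Next we compute $\Delta(\mu\circ v)$ on the open set $v^{-1}((\epsilon_b/2,\epsilon_b]\times P)$, in the standard manner of symplectic cohomology. Since $J_0$ preserves $\xi_P$ and sends $\partial_\mu\mapsto R_P$, we have $d\mu\circ J_0=-\lambda_P$ (up to the positive factor coming from the normalization of the symplectization). Writing the Floer equation in terms of $\rho=\mu\circ v$, we get $\partial_s\rho=\lambda_P(\partial_t v)-c$ and $\partial_t\rho=-\lambda_P(\partial_s v)$, where $c$ is the constant $\lambda_P(X_h)$; it is constant because $h$ is linear with constant slope. Taking derivatives gives
\[
\Delta\rho=d(v^*\lambda_P)(\partial_s,\partial_t)=\|\pi_{\xi}\partial_s v\|^2\ \ge 0
\]
(again up to a positive factor), so $\rho$ is subharmonic. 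The main point to check carefully is the sign. With the stated convention $\iota_{X_H}\omega=dH$ and the cohomological direction of the cylinders, we must confirm that the relevant inequality gives subharmonicity, so that $\rho$ cannot attain an interior maximum. If the signs go the other way, the conventions in \S\ref{sec:conventions} pair the equation with the reversed cylinder $v(-s,-t)$, and the same conclusion follows.

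Finally, the asymptotic orbits of $u$ are fixed points of the perturbed map. None of them lie in the collar, because $\theta_i$ is small and irrational, so the linear Hamiltonian flow on the collar has no $1$-periodic orbits. Hence $\rho$ tends to values strictly less than $\epsilon_b/2$ as $s\to\pm\infty$, or more precisely $v$ leaves the collar near both ends. Suppose $u$ met the collar. Then, after possibly shrinking the collar slightly (to $(\epsilon_b/2,\epsilon_b']$, relabelled), $\rho$ restricted to the preimage of the closed collar would attain a maximum. That maximum is either at an interior point or on the boundary of the fibre $\{\mu=\epsilon_b\}=\partial M$. An interior maximum is ruled out by the strong maximum principle unless $\rho$ is constant, which would contradict the asymptotics. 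The boundary case reduces to the same argument, since the fibres are compact with boundary $\partial M$ and $u$ maps into the fibres. The same reasoning applies verbatim to the product fibrations later. We expect the main obstacle to be purely the bookkeeping of signs and constants in the subharmonicity computation; the rest is the standard Hopf maximum principle.
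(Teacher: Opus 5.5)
Your proposal is correct and follows the same route as the paper: the paper also writes the section equations on the collar where $kf_1=k\theta_i\mu_i$ (Lemma \ref{mp}) and concludes by a maximum principle for the radial coordinate. Your computation $\Delta\rho=d\lambda_P(\pi_\xi\partial_s v,J_0\pi_\xi\partial_s v)\ge 0$ is the accurate form of the paper's step: the paper asserts that $\mu_i$ is harmonic, but only subharmonicity holds, and subharmonicity is all the argument needs.
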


The proposition follows immediately from the maximum principle:

\begin{lemma}\label{mp}
(Maximum principle) Let $J$ be a confinement-type almost complex structure.  Let $V$ denote an open subset of $\mathbb{R}\times S^1$ with coordinates $(s,t)$. Let $\tilde{u}:V\to \mathcal{E}^k$ denote a $J$-holomorphic section, which has the form $(s,t)\mapsto (s,t,\mu_i(s,t),y(s,t)) \subset \mathbb{R}\times S^1 \times (-\epsilon,0]\times \partial M$. Then $\mu_i(s,t)$ is a harmonic function.
\end{lemma}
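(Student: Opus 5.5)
The plan is to write the $J$-holomorphic section equation in the collar coordinates and read off a Cauchy--Riemann system for the pair consisting of $\mu_i$ and the Reeb component of $y$. From that system, harmonicity of $\mu_i$ should follow by cross-differentiation. The one term this does not obviously kill is $y^*d\lambda_P$, and I flag it below as the main obstacle.

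First, by Remark \ref{rmk:H_near_boundary}, $f_1=\theta_i\mu_i$ on the component $(\epsilon_b/2,\epsilon_b)_{\mu_i}\times P_i$. With the convention $\iota_{X_H}\omega=dH$, the Hamiltonian vector field of $k\theta_i\mu_i$ is therefore a constant multiple $\pm k\theta_i R_P$ of the Reeb field. So on $V$ the section equation, written via the lift of $J_0$ as in the discussion after Definition \ref{defn:confinement type acs}, becomes the fiberwise Floer equation
\[
\partial_s y+J_0(\partial_t y - c\,R_P)=0, \qquad c=\pm k\theta_i \ \text{constant}.
\]
Second, I decompose using $T(\text{collar})=\langle\partial_\mu\rangle\oplus\langle R_P\rangle\oplus\xi_P$. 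This decomposition is preserved by $J_0$, since $J_0\partial_\mu=R_P$ and $J_0\xi_P=\xi_P$ by requirement (4) of Definition \ref{defn:boundary_requirement_J}. Applying $d\mu_i$ and $\lambda_P$ to the equation should give
\[
\partial_s\mu_i=\lambda_P(\partial_t y)-c,\qquad \partial_t\mu_i=-\lambda_P(\partial_s y),
\]
with signs to be fixed by the convention. The $\xi_P$-components satisfy a separate equation $\pi_\xi\partial_s y+J_0\pi_\xi\partial_t y=0$. Crucially, the constant $c$ drops out on differentiation.

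Third, cross-differentiating gives
\[
\Delta\mu_i=\partial_s\big(\lambda_P(\partial_t y)\big)-\partial_t\big(\lambda_P(\partial_s y)\big)=d\lambda_P(\partial_s y,\partial_t y).
\]
So harmonicity is equivalent to vanishing of the pulled-back form $y^*d\lambda_P$ on $V$. Since $d\lambda_P$ annihilates $\partial_\mu$ and $R_P$, this quantity equals $d\lambda_P(\pi_\xi\partial_s y,\pi_\xi\partial_t y)$, which by the $\xi$-equation is $|\pi_\xi\partial_s y|^2$ in the metric $d\lambda_P(\cdot,J_0\cdot)$.

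This last step is the main obstacle. A priori the computation only gives $\Delta\mu_i\ge 0$, and the lemma asserts equality. To obtain equality I would use the freedom in the choice of $J_0$ on the collar permitted by Definition \ref{defn:confinement type acs}. One option is to choose it so that the collar splits holomorphically as a product of the $(\mu_i,\text{Reeb})$-strip with the $\xi_P$-directions. Another is to use the exactness of the fibration, with $c$ constant, to show that the $\xi$-component of any section in the collar is horizontal, so that $\pi_\xi\partial_s y\equiv 0$ along $u$. I would try the product-splitting normalization of $J_0$ first, since it makes $\mu_i+i\sigma$ a genuinely holomorphic coordinate, where $d\sigma=\lambda_P$ along $u$. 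I would then check carefully whether the proposition actually needs exact harmonicity, or whether the resulting inequality already suffices for the maximum principle.
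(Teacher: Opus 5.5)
Your first three steps are the paper's proof. The paper computes $\partial_t^\#=\partial_t-k\theta_iR$ and writes down the same first-order system, $\partial_t\mu_i+\lambda_P(\partial_s y)=0$ and $\partial_s\mu_i-\lambda_P(\partial_t y)\pm k\theta_i=0$. It then asserts harmonicity ``by a simple calculation''. Your cross-differentiation shows what that calculation actually gives:
\[
\Delta\mu_i=d\lambda_P(\partial_s y,\partial_t y)=d\lambda_P(\pi_\xi\partial_s y,J_0\pi_\xi\partial_s y)\geq 0 .
\]
Positivity holds because for $v\in\xi_P$ one has $d(\mu\lambda_P)(v,J_0v)=\mu\,d\lambda_P(v,J_0v)$, so compatibility of $J_0$ gives $d\lambda_P(v,J_0v)>0$. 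This term does not vanish in general, so the obstacle you flag is real. It is a defect of the statement, which overstates what the paper's calculation gives, not of your argument.

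Neither of your proposed repairs can work.
\begin{itemize}
\item A holomorphic product splitting of the collar, with one factor tangent to $\xi_P$, would require $\xi_P$ to be integrable, and a contact distribution never is. Moreover, requirement (4) already forces $J_0\partial_\mu=R_P$ and $J_0\xi_P=\xi_P$. For every such $J_0$ the identity above holds, so varying $J_0|_{\xi_P}$ only changes the metric in which $|\pi_\xi\partial_s y|^2$ is measured.
\item Exactness does not force $\pi_\xi du\equiv 0$. The lemma concerns sections over an arbitrary open $V$, and local $J$-holomorphic sections with any prescribed vertical $1$-jet exist. Take one whose vertical part of $\partial_s u$ is a nonzero vector of $\xi_P$; at that point $\Delta\mu_i>0$. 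So harmonicity is false in this generality. Globally, $\pi_\xi du\equiv0$ would mean the curve is swept out by Reeb trajectories.
\end{itemize}

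The correct conclusion, and the standard one, is that $\mu_i$ is \emph{subharmonic}. Your closing instinct is right: this is all the maximum-principle argument after the lemma (and in Proposition~\ref{prop:max_principle_B3}) uses. On any relatively compact region of the domain mapped into the collar, $\mu_i$ attains its maximum on the boundary of that region. The asymptotic orbits lie outside the collar, so the section cannot enter it.

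Two features make this work, and you should record both. First, the constant $k\theta_i$ drops out because $f_1$ is linear in $\mu_i$ on the collar, as you noted. Second, the inequality has the right sign precisely because requirement (4) imposes $J_0\partial_\mu=+R_P$. With $-R_P$ you would get superharmonicity, and the argument would fail.
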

\begin{proof}
Recall that near the $i$-th boundary component, the Hamiltonian function $kf_1$ has the form $k\theta_i\mu_i$. The horizontal lifts are
\begin{align}
    \partial_s^\#=\partial_s,\ & {} & \partial_t^\#=\partial_t-k\theta_i R,
\end{align}
where $R$ is the Reeb vector field of the contact manifold $\partial M$. The assumption that $\tilde{u}$ is a $J$-holomorphic section implies that
\begin{equation}
\begin{cases}
\frac{\partial \mu_i}{\partial t}+\lambda_P(\frac{\partial y}{\partial s})=0,\\
\frac{\partial \mu_i}{\partial s}-\lambda_P(\frac{\partial y}{\partial t})+k\theta_i=0.
\end{cases}
\end{equation}
The conclusion then follows by a simple calculation.
\end{proof}
\subsection{Hein inequality}

Recall that near the intersection of the Morse region and the Dehn twist region, the bundle $\mathbb{R}\times \mathcal{E}^k$ takes the form
\[
\mathbb{R} \times S^1 \times (1-\epsilon, 1+\epsilon) \times S^*S^n
\]
with the stable Hamiltonian structure $(\alpha, d(\theta_{T^{*}S^n} - kG_1(\mu)d\alpha))$ for $\mu \notin (1-\delta,1+\delta)$.

In this subsection we will fix some specific quantitative data on $G_1$. Recall that for now this function depends on $\epsilon, \epsilon_G$. 

We fix a large enough $C$ so that if we consider the region $\mathbb{R} \times S^1 \times [1-1/C, 1-1/C^2] \times S^*S^n$, there are no periodic orbits of $kG_1$ in this region, and $kG_1$ is independent of $\epsilon, \epsilon_G$ in this region. This choice of $C$ is large but independent of how small we will choose $\epsilon$. For $C$ large, we can and will think of $\mathbb{R}\times S^1\times (1-1/C,\epsilon)_\mu \times S^*S^n$ as an extension of the Morse region, which by definition only include the region $\mu\geq 1-\epsilon$. We arrange so that the only periodic orbits in the region $\mu\in [1-1/C,1+\epsilon]$ comes from the Morse perturbations of the orbits in the $\mu=1$ locus.

We also arrange so that for all points in the Morse region to the right of $\mu=1+ \epsilon$, the function of $f_1$ takes values in $[\frac{1}{2}\epsilon_G \epsilon ^2, \frac{3}{2}\epsilon_G \epsilon ^2]$.

For ease of terminology, we will refer to critical points in the Morse region but not over $\mu=1$ as being \textit{strictly} in the Morse region. Likewise, we will refer to critical points in the Dehn twist region but not coming from those in the $\mu=1$ locus as being in the \textit{strict Dehn twist region}.

\begin{proposition} \label{prop_hein_differential}
Fix $C$ as above, and suppose $x$ and $y$ are both critical points in the Morse region (not necessarily strictly in the Morse region). Fix an compatible almost complex structure $J$, then for $\epsilon>0$ sufficiently small, any $u\in \mathcal{M}^J(x,y)$ does not intersect the hypersurface
\[
\mathbb{R} \times S^1 \times \{1-1/C\} \times S^*S^n
\]
\end{proposition}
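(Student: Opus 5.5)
We argue by contradiction, combining a small energy budget with a monotonicity (Hein-type) lower bound. Suppose $u\in\mathcal{M}^J(x,y)$ meets the hypersurface $\{\mu=1-1/C\}$.

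\textbf{Step 1: the energy of $u$ is $O(\epsilon_G\epsilon^2)$.} The curvature vanishes by Proposition \ref{prop: vanishing curvature for E^k}, so $E(u)=\mathcal{A}(\gamma_x)-\mathcal{A}(\gamma_y)$. Both orbits lie in the Morse region, where $\alpha^k_\delta=\theta_M+kf_1\,d\alpha$ up to an $O(\delta)$ perturbation. Hence the action of each constant orbit is, up to $O(\delta)$, the value of $kf_1$ at the corresponding point. That value is $\frac12 k\epsilon_G(\mu-1)^2\le \frac12 k\epsilon_G\epsilon^2$ on the intersection region, and lies in $[\frac12 k\epsilon_G\epsilon^2,\frac32 k\epsilon_G\epsilon^2]$ further out. (We take sign conventions as fixed so that this matches $-kG_1$.) Therefore
\[E(u)\le 2k\epsilon_G\epsilon^2+O(\delta),\]
which tends to $0$ as $\epsilon\to0$ with $\delta\ll\epsilon_G\epsilon^2$.

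\textbf{Step 2: a lower bound on energy for curves crossing a fixed collar.} If $u$ meets $\mu=1-1/C$ and also has an end in $\mu\ge 1-\epsilon$, then it must cross the whole shell $[1-1/C,\,1-1/C^2]\times S^*S^n$, since $\epsilon<1/C^2$ once $\epsilon$ is small. Everything about this shell is fixed independently of $\epsilon,\epsilon_G$: the Hamiltonian $kG_1$ there, the (confinement-type, $\mu$-invariant) $J$, and the absence of $1$-periodic orbits of $kG_1$. This is the setting of Hein's argument. I would prove a lemma of the following form: there is $\hbar=\hbar(C,J,G_1|_{\text{shell}})>0$ such that any $J$-holomorphic section whose image meets both boundary components of the shell has energy at least $\hbar$ inside the shell.

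\emph{Proof strategy.} Suppose not, and take a sequence of sections $u_\nu$ crossing the shell with energy tending to $0$. By translation invariance in $s$, we may assume each $u_\nu$ meets $\mu=1-1/(2C)$ (say) at $s=0$. The horizontal part of $u_\nu$ is $C^0$-controlled, because the fibration is trivial over $S^1\times$shell. Vertical energy going to zero gives local $C^1$ bounds, since bubbling is excluded by exactness. Arzelà–Ascoli then yields a limit of zero energy on a neighbourhood of the crossing: a horizontal section, i.e.\ a $1$-periodic orbit of $kG_1$ in the shell. No such orbit exists, a contradiction. To make this rigorous on a noncompact domain, I would run the argument on long cylinders $[-R,R]\times S^1$ and use that zero energy forces $\partial_t u=X_{kG_1}$, with the $\mu$-coordinate constant in $s$.

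\textbf{Step 3: conclude.} Choose $\epsilon$ (and then $\delta$) so small that $2k\epsilon_G\epsilon^2+O(\delta)<\hbar$. Then Steps 1 and 2 contradict each other, so $u$ cannot meet $\{\mu=1-1/C\}$.

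The main obstacle is Step 2: extracting a uniform $\hbar$ via compactness with only $C^0$ control in the $\mu$-direction, and ensuring that the perturbation region near $\mu=1$ does not interfere. Using the $\mu$-invariance of $J$ on the shell (or an integrated maximum principle à la Abouzaid–Seidel applied to $\mu\circ u$ on the subdomain $u^{-1}(\text{shell})$) should provide the required estimates.
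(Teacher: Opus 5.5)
Your proposal is correct and follows the paper's proof. You bound the vertical energy by the action difference $\mathcal{A}(x)-\mathcal{A}(y)=O(\epsilon_G\epsilon^2)$, where the paper uses the cruder $10k\epsilon_G\epsilon^2$. You then note that a curve with both ends in $\mu\ge 1-\epsilon$ which reaches $\mu=1-1/C$ must cross the fixed shell $[1-1/C,1-1/C^2]\times S^*S^n$, where a Hein-type estimate gives an $\epsilon$-independent lower bound on energy.

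The only difference is in that lower bound. The paper simply cites Hein's Proposition 3.2, remarking that her proof only uses the part of the curve inside the shell. You instead re-derive the bound by a compactness argument that produces a $1$-periodic orbit of $kG_1$ in the shell, which is a sound route. Your argument does not need the $\mu$-invariance of $J$ on the shell that you assume; confinement type does not impose it there, but any fixed $J$ on the shell suffices.
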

This in particular means that the $J$-holomorphic curves that begin and end in the Morse region more or less stay completely within the Morse region. If we take $\epsilon_G$ and $\epsilon$ sufficiently small, it follows from standard arguments that the $J$-holomorphic curves that begin and end in the Morse region are identified with gradient flow lines.

The key input to the above proposition is an energy estimate of Hein, which we recall for the convenience of the reader.

Let $(M,\omega)$ be a symplectic manifold and $J$ a (potentially time dependent) compatible almost complex structure. Let $H$ denote a (nondegenerate) Hamiltonian on $M$. Hein considers Floer cylinders $u:\mathbb{R}\times S^1 \rightarrow M$ satisfying Floer's equation
\[
\partial_s u +J(\partial_t u -X_H)=0
\]
asymptotic to time-1 periodic orbits of the Hamiltonian. She defines the energy of the map to be
\[
E_H(u):=\int_{\mathbb{R}\times S^1} \|\partial_s u\|^2 ds \wedge dt
\]
where $\|-\|$ is the metric determined by the symplectic form and the almost complex structure. The main proposition of Hein is as follows
\begin{proposition}[Proposition 3.2 \cite{Hein}] \label{prop:Hein}
Let $K$ be a non-degenerate Hamiltonian and let $U$ and $V$
be open sets that are both bounded by level sets of $K$. Assume furthermore
that $K$ does not have $1$-periodic orbits in
$\overline{V} \setminus  U$ and is autonomous on this
shell. Let $u: S^1 \times \mathbb{R} \to M$ be a Floer trajectory that intersects $\partial U$ and $\partial V$.
Then there is a constant $\epsilon_H > 0$, only depending on the open sets $U$ and $V$, as well as
the restriction of the Hamiltonian $K$ and the almost complex structure $J_t$ to $V\setminus U$, such that $E_H(u)|_{U\setminus V} > \epsilon_H$.
\end{proposition}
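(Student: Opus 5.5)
The plan is to argue by contradiction and compactness, using only the dynamics of $K$ on the shell $\overline V\setminus U$, where $K$ is autonomous and has no $1$-periodic orbits.

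\textbf{Setup.} Suppose no such $\epsilon_H$ exists. Then there is a sequence of Floer trajectories $u_k$, each meeting both $\partial U$ and $\partial V$, with $E_H(u_k)|_{V\setminus U}\to 0$. Fix an intermediate level set $\Sigma=\{K=c\}$ strictly between $\partial U$ and $\partial V$, and a slightly smaller shell $W'$ around $\Sigma$ whose closure lies in the interior of $V\setminus U$. Since $u_k$ is continuous on a connected domain and meets both boundaries, it meets $\Sigma$; translate in $s$ so that $u_k(0,t_k)\in\Sigma$.

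\textbf{Pointwise bound (the main obstacle).} The key analytic input is to upgrade the integral bound to a pointwise one: on the part of the domain mapped into the shell, $\sup|\partial_s u_k|\to 0$. I would use the standard mean value inequality. For solutions of Floer's equation with autonomous $K$ on the shell, the energy density $e=|\partial_s u|^2$ satisfies
\[
\Delta e\geq -A\,e^2 - B,
\]
with constants $A,B$ depending only on $J$ and $K$ on $\overline V\setminus U$. Hence if the energy on a disk of fixed radius $r$ is below a threshold, then $e$ at the center is bounded by a constant times that energy plus a term $O(r^2)$; shrinking $r$ makes this arbitrarily small. The delicate points are:
\begin{itemize}
\item the disk must map into the shell, and only the energy inside the shell is controlled;
\item this must be arranged uniformly in $k$.
\end{itemize}
I would handle this with a continuity (bootstrap) argument. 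Starting from $(0,t_k)$, as long as $u_k$ stays in $V\setminus U$ on a region, the smallness of $|\partial_s u_k|$ there gives
\[
|\partial_s K(u_k)| = |dK(\partial_s u_k)| \leq \|dK\|\,|\partial_s u_k|,
\]
and $\partial_t K(u_k)=dK(\partial_t u_k)=dK(X_K-J\partial_s u_k)$ is also small, since $dK(X_K)=0$. So the value of $K$ along $u_k$ changes by at most $o(1)\cdot R$ over any box $[-R,R]\times S^1$. Hence $u_k\big([-R,R]\times S^1\big)\subset W'$ for $k\gg0$, for any fixed $R$, and the bootstrap closes.

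\textbf{Conclusion.} On $[-R,R]\times S^1$ we now have $\partial_t u_k = X_K(u_k)+o(1)$ uniformly, so by elliptic bootstrapping the maps $u_k$ converge in $C^\infty_{loc}$ along a subsequence. The limit $u_\infty$ has $\partial_s u_\infty\equiv 0$ and $\partial_t u_\infty=X_K(u_\infty)$, with image in $\overline{W'}\subset \overline V\setminus U$. Thus $t\mapsto u_\infty(0,t)$ is a $1$-periodic orbit of $K$ lying on $\Sigma$, contradicting the hypothesis that there are no $1$-periodic orbits in the shell.

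The constant produced depends only on $K|_{V\setminus U}$, on $J_t|_{V\setminus U}$, and on the geometry of $U,V$ (through $A$, $B$, $\|dK\|$ and the chosen $\Sigma$, $W'$), as claimed. Nondegeneracy of $K$ elsewhere is used only to make sense of the Floer trajectories themselves.
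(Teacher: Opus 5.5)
The paper does not prove this statement. It quotes Hein and only remarks that her argument looks at the part of $u$ lying over $V\setminus U$. Your endgame is the standard one and is fine: once $\sup|\partial_s u_k|\to 0$ on a box mapped into a compact subshell, $u_k(0,\cdot)$ converges to a $1$-periodic orbit on $\Sigma$.

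The gap is exactly in the step you flag as the main obstacle. This is the bootstrap that is supposed to turn smallness of the energy \emph{inside the shell} into a pointwise bound on $|\partial_s u_k|$.

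\begin{itemize}
\item The mean value inequality at a point $z$ needs a domain disk $B_r(z)$, with $r$ bounded below independently of $k$, such that $u_k(B_r(z))\subset \overline V\setminus U$. This is needed so that $A,B$ are shell constants. More importantly, it is needed so that $\int_{B_r(z)}|\partial_s u_k|^2$ is controlled by the shell energy $E_k$, since nothing is assumed about the energy outside the shell.
\item At the start you only know that the single point $u_k(0,t_k)$ lies on $\Sigma$, so no such disk is available.
\item At a point at domain distance $\rho$ from the frontier of the region mapped into the shell, the inequality only gives $|\partial_s u_k|\le C(\sqrt{E_k}/\rho+\rho)$. Its integral along a path to the frontier diverges logarithmically.
\end{itemize}

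Hence nothing in your argument excludes $u_k$ making an excursion from near $\Sigma$ to $\partial(\overline V\setminus U)$ over a domain region that shrinks as $k\to\infty$ (a finger or bubble-type escape). Your bounds on the derivatives of $K\circ u_k$ only give smallness of its Dirichlet energy, and that cannot rule this out. In two dimensions a $W^{1,2}$ function can make an $O(1)$ excursion on a small disk at arbitrarily small Dirichlet energy. So the continuity argument presupposes the uniform modulus of continuity it is meant to produce.

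For the statement as posed, with shell energy only and shell-only constants, you need an additional target-local input that genuinely uses holomorphicity. One option is a rescaling argument combined with a monotonicity-type lower bound. It would show that a piece of $u_k$ crossing a fixed target distance inside the shell through a shrinking domain region carries shell energy bounded below by a constant depending only on $J_t$ and $K$ on the shell.

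Alternatively, you could assume smallness of the total energy $E_H(u_k)$. This is how the paper actually invokes the proposition: the whole vertical energy is small in both applications ($\le 10k\epsilon_G\epsilon^2$, resp.\ $E(u_k)\to 0$). Then the mean value inequality on domain disks of fixed radius, with constants from the data on a compact region containing all the curves, gives $\sup|\partial_s u_k|\to 0$ on the whole cylinder. The bound $|d(K\circ u_k)|\le C\sup|\partial_s u_k|$ at points mapped into the shell makes your continuity argument immediate, and the rest of your proof goes through. The cost is that $\epsilon_H$ then depends on uniform bounds for the data outside the shell, not on the shell data alone.
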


\begin{remark}
Even though Hein considers entire Floer trajectories $u:\mathbb{R}\times S^1 \to M$, her proof only examines the parts of $u$ that intersect $V\setminus U$.
\end{remark}

\begin{proof}[Proof of Proposition \ref{prop_hein_differential}]
Suppose $u$ is a map from $x$ to $y$, then its vertical energy $E(u)$ is pointwise positive and equal to $\mathcal{A}(x)-\mathcal{A}(y)\leq 10k\epsilon_G \epsilon^2$. Consider the collar neighborhood of the mapping torus near the intersection of the Dehn twist region and the Morse region, which looks like 
\[
\mathbb{R} \times S^1 \times (1-1/C, 1+\epsilon) \times S^*S^n
\]

The intersection of the image of $u$ with this region can be identified with a map $u':S \rightarrow (1-1/C, 1+\epsilon) \times S^*S^n$ satisfying the Floer equation. We suppose for the sake of contradiction that $u$ intersects the hypersurface $\mathbb{R}\times S^1 \times \{1-1/C\} \times S^*S^n$ (i.e., $u$ crosses into the Dehn twist region). If we further restrict to 
\[
(1-1/C, 1-1/C^2) \times S^*S^n
\]
we can take this Hamiltonian to be $-kG_1(\mu)$. Restricted to $S$, Hein's notion of energy for the associated Floer trajectory can be identified pointwise with our notion of energy $E(u)$, and integrated over $S$ it is bounded above by $10k\epsilon_G\epsilon^2$. We apply Hein's proposition by taking $V\setminus U$ to be  $(1-1/C, 1-1/C^2) \times S^*S^n$ and conclude that no such curve can exist if we take $\epsilon>0$ small enough.
\end{proof}
Now we have proved $J$-holmorphic curves that begin and end in the Morse region cannot cross the $\mu=1-1/C$ barrier, we can use the maximum principle (an adaptation of lemma \ref{mp} to allow $kf_1$ to be any function of $\mu$ instead of just the linear case) to further sharpen this result. Note we needed the $\mu=1-1/C$ bound first before we can apply the maximum principle.
\begin{corollary}
For any $0<\eta< 1-\delta$, suppose $u\in \mathcal{M}^J(x,y)$ and $x,y$ are both in the Morse region, then
\[
u\cap \mathbb{R}\times \{\eta\}\times S^*S^n =\emptyset.
\]
\end{corollary}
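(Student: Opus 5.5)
We will deduce the corollary from Proposition \ref{prop_hein_differential} together with a maximum principle argument in the collar $\mathbb{R}\times S^1\times (1-1/C,1-\delta)_\mu\times S^*S^n$. By Proposition \ref{prop_hein_differential}, any $u\in\mathcal{M}^J(x,y)$ with $x,y$ in the Morse region misses the hypersurface $\{\mu=1-1/C\}$. Since $u$ is connected and its asymptotic orbits lie at $\mu\ge 1-\delta$, the image of $u$ lies entirely in $\{\mu>1-1/C\}$. So for $\eta\le 1-1/C$ the claim is already proved, and it remains to rule out the case $1-1/C<\eta<1-\delta$.

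For this range, let $S=u^{-1}(\{\mu<1-\delta\})$. On $S$ the curve lies in the symplectization-type region $(1-1/C,1-\delta)\times S^*S^n$. There $J$ is of confinement type: it preserves the contact structure, is $\mu$-invariant, and sends $\partial_\mu$ to the Reeb field. The Hamiltonian there is the autonomous function $-kG_1(\mu)$, which depends only on $\mu$. Writing $u=(s,t,\mu(s,t),y(s,t))$ and computing the horizontal lifts as in Lemma \ref{mp}, we have $\partial_t^\#=\partial_t + h'(\mu)R$, where $h=-kG_1$ up to sign conventions. The $J$-holomorphic equations then become
\[
\partial_t\mu+\lambda(\partial_s y)=0,\qquad \partial_s\mu-\lambda(\partial_t y)+h'(\mu)=0 .
\]
Differentiating and using $d\lambda(\partial_s y,\partial_t y)=|\pi_\xi\partial_s y|^2\ge 0$ gives
\[
\Delta\mu = |\pi_\xi \partial_s y|^2 - h''(\mu)\,\partial_s\mu .
\]
This is an elliptic inequality of the form $\Delta\mu + h''(\mu)\partial_s\mu\ge 0$. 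The strong maximum principle therefore applies: $\mu$ cannot attain an interior minimum on $S$ unless it is constant.

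Now suppose $u$ meets $\{\mu=\eta\}$. Then $\mu\circ u$ takes a value below $1-\delta$ on $S$. On the boundary of $S$, and at the ends (where $u$ converges to orbits with $\mu\ge 1-\delta$), we have $\mu\ge 1-\delta$. Since we already know $\mu>1-1/C$ on $S$, the infimum over $S$ is attained in the interior, at a value in $(1-1/C,1-\delta)$. The maximum principle then forces $\mu$ to be constant on a component of $S$, which contradicts the boundary values. Hence $u$ never reaches $\mu=\eta$.

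The main obstacle is the sign bookkeeping. We must check that with our conventions ($\iota_{X_H}\omega=dH$ and the SFT-direction of the differential) the first-order term $h''(\mu)\partial_s\mu$ and the quadratic term combine so that it is a minimum, not a maximum, that is excluded. If the signs come out reversed, we would apply the same argument to $-\mu$ and exclude interior maxima instead, using the symmetric barrier. We also need $\delta$ small relative to $1/C$ so that the region $(1-1/C,1-\delta)$ is nonempty and lies where $J$ is of confinement type.
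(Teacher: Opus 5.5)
There is a genuine gap at the maximum-principle step, and it is a matter of direction, not of sign conventions. The identity you derive, $\Delta\mu + h''(\mu)\,\partial_s\mu = |\pi_\xi\partial_s y|^2\ge 0$, makes $\mu$ a \emph{subsolution} of $L=\Delta+h''(\mu)\partial_s$. Hopf's strong maximum principle then forbids interior \emph{maxima} of $\mu$, not interior minima. The sign of the quadratic term is intrinsic. Since $\pi_\xi\partial_t y = J\pi_\xi\partial_s y$, it equals $d\lambda_{S^*S^n}(\pi_\xi\partial_s y, J\pi_\xi\partial_s y)$, which is nonnegative because $J\partial_\mu=R$ and $J|_\xi$ are compatible with $d(\mu\lambda_{S^*S^n})$. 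Your fallback does not help: $L(-\mu)\le 0$ again only excludes interior maxima of $\mu$.

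Geometrically, $\{\mu=c\}$ is convex as seen from $\{\mu\le c\}$ and concave as seen from the Morse side. A curve therefore cannot touch it from below, but a curve living in the Morse region is free to dip through it and come back. Such a dip is exactly an interior minimum of $\mu$ on $S=u^{-1}(\{\mu<1-\delta\})$, which a subsolution is allowed to have. The integrated version gives nothing either. For $\mu_0\in[1-\epsilon,1-\delta]$ with both ends at $\mu>\mu_0$ one has $\int_\Gamma d\alpha=0$. So Lemma~\ref{lem:local_energy_inequality_differential} only says that the energy $\mu_0\int_\Gamma\lambda_{S^*S^n}$ of the dipping piece is positive, and no contradiction follows.

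Your outline is the one the paper indicates: the barrier of Proposition~\ref{prop_hein_differential}, then ``an adaptation of Lemma~\ref{mp}''. That appeal does not supply the missing direction. The equations in Lemma~\ref{mp} actually give $\Delta\mu_i = y^*d\lambda_P(\partial_s,\partial_t)\ge 0$, i.e.\ subharmonicity rather than harmonicity. This keeps curves off the convex boundary $\partial M$ and nothing more.

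Getting from $\mu>1-1/C$ to $\mu\ge 1-\delta$ requires an argument of a different kind. It must exploit the small energy and the absence of $1$-periodic orbits in $(1-1/C,1-\delta)$. Two possibilities:
\begin{itemize}
\item A Hein-type energy lower bound on shells reaching up to $1-\delta$. This is not automatic: near $\mu=1$ the Hamiltonian is $\frac{1}{2}k\epsilon_G(\mu-1)^2$, so Hein's constant there shrinks together with the action bound $10k\epsilon_G\epsilon^2$.
\item A degeneration argument showing these cylinders are close to gradient lines of $f_1$. In the collar these lines are radial, so once below $\mu=1-\delta$ they cannot return to the Morse region.
\end{itemize}
A smaller point: confinement type makes $J$ cylindrical only on $(1-\epsilon,1-\delta)$, so on $(1-1/C,1-\epsilon]$ this must be imposed separately.
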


\subsection{Local energy inequality}

We next prove a local energy inequality which will be key to understanding $J$-holomorphic curves that pass between the Morse and the Dehn twist regions. For this section we fix $C,\epsilon, \epsilon_G$ and fix a compatible $J$ of confinement type as in the previous subsection so that Proposition \ref{prop_hein_differential} holds. The only property we will need about $J$ in this subsection will be that it is confinement type.

We pick $\mu_0\in [1-\epsilon,1+\epsilon]\setminus (1-\delta,1+\delta)$. Suppose $u:\mathbb{R} \times S^1 \rightarrow \mathbb{R} \times \mathcal{E}^k$ intersects the $\mu=\mu_0$ hypersurface transversely.

\begin{lemma} \label{lem:local_energy_inequality_differential}
    Suppose we orient $\Gamma:=u\cap \{\mu=\mu_0\}$ with the boundary orientation of $u\cap \{\mu\leq \mu_0\}$. 
    In the neighborhood we have fixed above, we can write $\theta_{T^*S^n}=\mu \lambda_{S^*S^n}$, where $\lambda_{S^*S^n}$ is the contact form given by restricting $\theta_{T^*S^n}$ to the cosphere bundle. 
    Then we have the following local energy inequality
    \[
    \int_\Gamma \lambda_{S^*S^n}+k\epsilon_G(\mu_0-1) d\alpha \ge 0.
    \]
    where equality holds if and only if $\Gamma$ is empty.
\end{lemma}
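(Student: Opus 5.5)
The plan is to prove the inequality pointwise along $\Gamma$, in the spirit of the Abouzaid--Seidel integrated maximum principle. We avoid integrating energy over $u^{-1}\{\mu\le\mu_0\}$ and instead compare the integrand with the normal derivative of $\mu\circ u$. Since $\mu_0\notin(1-\delta,1+\delta)$, near the hypersurface $\{\mu=\mu_0\}$ the stable Hamiltonian structure is unperturbed, and $-kG_1(\mu)=\frac12 k\epsilon_G(\mu-1)^2$ there. It is also the region where the confinement-type condition of Definition~\ref{defn:confinement type acs} holds: $J_0$ preserves $\xi=\ker\lambda_{S^*S^n}$, is $\mu$-invariant, and satisfies $J_0\partial_\mu=R$, where $R$ is the Reeb field of $S^*S^n$.

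\emph{Step 1 (horizontal lifts).} In the chart $S^1\times(1-\epsilon,1+\epsilon)\times S^*S^n$ with vertical form $d(\mu\lambda_{S^*S^n}-kG_1(\mu)d\alpha)$, compute the horizontal lifts as in the proof of Lemma~\ref{mp}. We expect $\partial_s^\#=\partial_s$ and $\partial_t^\#=\partial_t+c\,R$, with $c=\pm kG_1'(\mu_0)=\mp k\epsilon_G(\mu_0-1)$; the sign is to be fixed by the convention $\iota_{X_H}\omega=dH$. The two facts needed are $d\mu(\partial_t^\#)=0$ and $\lambda_{S^*S^n}(\partial_t^\#)=c$.

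\emph{Step 2 (a pointwise identity from $J$).} For any vertical vector $X$ in this region we claim $\lambda_{S^*S^n}(X)=-d\mu(JX)$. It suffices to check this on $\partial_\mu$, on $R$, and on $\xi$, using $J\partial_\mu=R$, $JR=-\partial_\mu$ and $J\xi=\xi$. Now let $v$ be a tangent vector to $u^{-1}(\Gamma)$ that is positively oriented for the boundary orientation of $u^{-1}\{\mu\le\mu_0\}$. Then the outward normal is $n=-jv$. Write $du(v)=v^\#+w$ with $w$ vertical. Because $J$ is the lift of $J_0$, we have $Jv^\#=(jv)^\#$, so holomorphicity gives that the vertical part of $du(jv)$ equals $Jw$. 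By Step 1, horizontal lifts have $d\mu=0$. Hence
\[
d\mu(du(n))=-d\mu(Jw)=\lambda_{S^*S^n}(w).
\]
Since $n$ is outward and the crossing is transverse, $d\mu(du(n))>0$, and therefore $\lambda_{S^*S^n}(w)>0$ at every point of $\Gamma$.

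\emph{Step 3 (integrate).} We have $\lambda_{S^*S^n}(du(v))=\lambda_{S^*S^n}(w)+c\,d\alpha(v)$. Integrating over $\Gamma$ gives
\[
\int_\Gamma\lambda_{S^*S^n}-c\,d\alpha=\int_\Gamma\lambda_{S^*S^n}(w)\ge0,
\]
and $-c=k\epsilon_G(\mu_0-1)$ yields the stated form. Since the integrand is strictly positive wherever $\Gamma$ is nonempty, equality forces $\Gamma=\emptyset$. If $\Gamma$ is noncompact we must make sure the integral is finite. This holds because $u$ is asymptotic to orbits with $\mu\neq\mu_0$, so $\Gamma$ is a compact $1$-manifold.

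The main obstacle is purely one of signs. These come from the orientation of $\Gamma$, from the sign of $c$ in the horizontal lift, and from the convention $J\partial_\mu=R$ versus $JR=\partial_\mu$. An error in any of them would flip the term $k\epsilon_G(\mu_0-1)d\alpha$ or the direction of the inequality. I would fix these first by checking against the trivial cylinder over a Reeb orbit at $\mu=\mu_0$, and against the linear case of Lemma~\ref{mp}.
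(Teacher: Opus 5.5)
Your proposal is correct and follows essentially the same route as the paper: the paper also computes, pointwise along $\Gamma$, the $\partial_\mu$-component of $J$ applied to the tangent vector of $\Gamma$, using $J\partial_\mu=R$ and the horizontal lift $\partial_\alpha^\#=\partial_\alpha-k\epsilon_G(\mu_0-1)R$; it observes that this component is negative by the boundary orientation and then integrates, which is your identity $d\mu(du(n))=\lambda_{S^*S^n}(w)>0$ with different bookkeeping. The sign you leave open in Step 1 is determined by the vertical $2$-form $d(\mu\lambda_{S^*S^n}-kG_1(\mu)\,d\alpha)$ itself, not by the Hamiltonian-vector-field convention, and it gives $c=-k\epsilon_G(\mu_0-1)$, which is exactly the value your Step 3 uses.
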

\begin{proof}

The proof is a generalization of the one in \cite{hutchings2005periodic}. We parameterize $\Gamma$ as $(s(\tau), \alpha (\tau), \mu_0, y(\tau))\in \mathbb{R} \times S^1 \times \{\mu_0\}\times S^*S^n$. Write $\Gamma'(\tau) = aR+v+b\partial_s+c\partial_\alpha$ where $R$ is the Reeb vector of $(S^*S^n,\lambda_{S^*S^n})$, and $v\in TY$ is in the contact distribution of $(S^*S^n,\lambda_{S^*S^n})$. By our assumptions on $J$, we have 
    \[
    J(\partial_s)=\partial_\alpha-k(\mu_0-1)R,
    \]
    and
    \[
    J(\partial_\alpha) = -\partial_s-k\epsilon_G(\mu_0-1)\partial_\mu.
    \]
    Consequently, we have $$J\Gamma'(\tau) = (-a-k\epsilon_G c(\mu_0-1))\partial_\mu+Jv-k\epsilon_G b(\mu_0-1)R+b\partial_\alpha-c\partial_s.$$ By our orientation convention, it must hold that
    \[
    -a-k\epsilon_G c(\mu_0-1)<0
    \]
    unless $\Gamma$ is empty. Notice that $a$ is just $\lambda_{S^*S^n}(\Gamma'(\tau))$ and $c$ is just $d\alpha (\Gamma'(\tau))$. Integrating the above inequality over $\Gamma$, we get the desired formula.
\end{proof}

Using the above local energy inequality we prove three cases of the confinement principle. 

\begin{lemma} \label{lem:confinement_differential_twist_region}
If $x,y$ are both orbits in the Dehn twist region, then the conclusion of Theorem \ref{thm:confinement_differential} holds.
\end{lemma}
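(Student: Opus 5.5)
The plan is to show that $u$ never reaches the hypersurface $\{\mu=\mu_0\}$ for any $\mu_0\in(1+\delta,1+\epsilon)$. This uses the local energy inequality of Lemma \ref{lem:local_energy_inequality_differential}, played off against positivity of vertical energy via Stokes' theorem. Let $u$ be a $J$-holomorphic section with both ends $x,y$ in the Dehn twist region. Every orbit of the Dehn twist region, including those in the intersection region coming from the $\mu=1$ locus, lies in $\{\mu<1+\delta\}$. So for every $\mu_0\in(1+\delta,1+\epsilon)$ both asymptotic ends of $u$ lie in $\{\mu<\mu_0\}$. By Sard's theorem, for almost every such $\mu_0$ the map $\mu\circ u$ has $\mu_0$ as a regular value, so $u$ meets $\{\mu=\mu_0\}$ transversely. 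Then $\Gamma=u\cap\{\mu=\mu_0\}$ is a compact $1$-manifold, and $\Sigma:=u^{-1}(\{\mu\ge\mu_0\})$ is a compact subsurface of $\mathbb{R}\times S^1$ disjoint from both ends.

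First, by Proposition \ref{prop: vanishing curvature for E^k} the curvature vanishes, so the vertical energy density $u^*d\alpha^k_\delta$ is pointwise nonnegative. Applying Stokes on $\Sigma$, and recalling that $\Gamma$ carries the boundary orientation of $u\cap\{\mu\le\mu_0\}$ (opposite to that of $\Sigma$), gives
\[
0\le E(u|_\Sigma)=\int_\Sigma u^*d\alpha^k_\delta=-\int_\Gamma \alpha^k_\delta .
\]
Since $\mu_0\notin(1-\delta,1+\delta)$, the perturbation is absent there, and along $\{\mu=\mu_0\}$ we have $\alpha^k_\delta=\mu_0\lambda_{S^*S^n}-kG_1(\mu_0)\,d\alpha$, with $G_1(\mu_0)$ a constant. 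Moreover, $\int_\Gamma d\alpha=0$: the form $d\alpha$ is the pullback of a closed form on the base, $u$ is a section, and $\Gamma$ bounds the compact region $\Sigma$ of the base, so Stokes applies. Hence the displayed inequality becomes $\mu_0\int_\Gamma\lambda_{S^*S^n}\le 0$.

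Second, by Lemma \ref{lem:local_energy_inequality_differential}, using again $\int_\Gamma d\alpha=0$, we get $\int_\Gamma\lambda_{S^*S^n}\ge0$, with equality if and only if $\Gamma=\emptyset$. Since $\mu_0>0$, combining the two inequalities forces $\int_\Gamma\lambda_{S^*S^n}=0$, and hence $\Gamma=\emptyset$. Thus $u$ avoids $\{\mu=\mu_0\}$ for almost every $\mu_0\in(1+\delta,1+\epsilon)$. Because the domain is connected and the ends lie in $\{\mu<\mu_0\}$, it follows that $u\subset\{\mu<\mu_0\}$ for such $\mu_0$, and so $u$ lies in the Dehn twist region $B^k_{\mu\le1+\epsilon}$. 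This gives item (2) of Theorem \ref{thm:confinement_differential}. It also gives the Dehn-twist half of item (3) whenever the other end is in the Dehn twist region; the mixed case is treated separately.

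The main point to check carefully is the sign and orientation bookkeeping. One must verify that the boundary orientation convention in Lemma \ref{lem:local_energy_inequality_differential} is exactly opposite to the Stokes orientation on $\Sigma$, so that the two inequalities genuinely point in opposite directions. One must also confirm that on $\{\mu=\mu_0\}$ the global primitive $\alpha^k_\delta$ is exactly of the stated form in the chart where $J$ is of confinement type. This is why $\mu_0$ must be restricted away from the $\delta$-perturbation window and to the interval where $J$ is $\mu$-invariant.
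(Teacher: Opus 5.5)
Your proposal is correct and follows the paper's proof. In both, the ends lying inside force $\int_\Gamma d\alpha=0$ on a transverse level set $\Gamma=u\cap\{\mu=\mu_0\}$, so Stokes' theorem with vanishing curvature gives $0\le -\mu_0\int_\Gamma\lambda_{S^*S^n}$, which contradicts the strict local energy inequality unless $\Gamma=\emptyset$. The only difference is that you take a generic $\mu_0\in(1+\delta,1+\epsilon)$ via Sard's theorem rather than the paper's $\mu_0=1+\epsilon$; this is a harmless and slightly more careful variant.
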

\begin{proof}
Consider $\Gamma:=C\cap \{\mu=1+\epsilon\}$ with $\epsilon$ small and chosen so that this intersection is transverse. From the assumptions on orbits, we have
\[
\int_{\Gamma} d\alpha =0.
\]
The energy inequality then tells us 
\[
\int_{C\cap \{x\geq 1+\epsilon\}} d\alpha^k =  -\int_{\Gamma} \alpha^k <0,
\]
where $\alpha^k = \mu\lambda_{S^*S^n} + \epsilon_G(\frac{k}{2}(\mu-1)^2)d\alpha$, which is a contradiction.
\end{proof}

\begin{remark}
Note this statement is independent of the Morse function in the Morse region; this is a manifestation of a maximum principle in symplectic homology. 
\end{remark}

\begin{lemma}
If $x$ is in the Dehn twist region but not in the Morse region, and $y$ is in the Morse region but not in the Dehn twist region, or vice versa, then the moduli space $\mathcal{M}^J(x,y)$ is empty.
\end{lemma}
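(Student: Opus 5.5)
The plan is to argue by contradiction using action/energy together with the local energy inequality of Lemma \ref{lem:local_energy_inequality_differential}, treating the two directions separately. Recall $\langle dx,y\rangle$ counts cylinders in $\mathcal{M}^J(y,x)$, but the statement concerns the moduli space itself. So for a section $u$ we only need which end is positive, with positive vertical energy $E(u)=\mathcal{A}(\gamma_+)-\mathcal{A}(\gamma_-)>0$.

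First I would record the actions. Orbits strictly in the Morse region are critical points of $kf_1$, so their action lies in $[\tfrac12 k\epsilon_G\epsilon^2,\tfrac32 k\epsilon_G\epsilon^2]$, up to $O(\delta)$. Orbits strictly in the Dehn twist region sit at $\mu_\ell<1-1/C$ or at the zero section. Their action is $\mu_\ell\cdot(\text{Reeb period}) -kG_1(\mu_\ell)$, or the analogous expression with $H$ in the even chart. This is bounded below by a positive constant independent of $\epsilon,\epsilon_G$, since these orbits wind nontrivially in the $S^*S^n$ direction and $G_1$ there is fixed once $C$ is fixed.

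Case 1: the positive end lies strictly in the Morse region and the negative end strictly in the Dehn twist region. Then $E(u)\le \tfrac32 k\epsilon_G\epsilon^2 - c_0<0$ for $\epsilon,\epsilon_G$ small, which is a contradiction. If instead the sign conventions make the Dehn twist orbits have the smaller action, this case becomes the hard one. In that event I would cut $u$ along $\Gamma=u\cap\{\mu=\mu_0\}$ with $\mu_0=1+\epsilon$ (transverse after a small change of $\mu_0$). Stokes then gives $\int_{u\cap\{\mu\ge\mu_0\}} d\alpha^k = \mathcal{A}(\text{Morse end}) - \int_\Gamma \alpha^k$. Using $\alpha^k=\mu_0\lambda_{S^*S^n}+\tfrac{k}{2}\epsilon_G(\mu_0-1)^2 d\alpha$ on $\Gamma$ together with Lemma \ref{lem:local_energy_inequality_differential}, I would bound $\int_\Gamma\alpha^k$ from below by a quantity controlled by $\int_\Gamma d\alpha$. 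Here $\int_\Gamma d\alpha=\pm1$, because exactly one end lies on each side and $\Gamma$ separates them in the cylinder. So the energy of the Morse side can be compared to the $O(\epsilon_G\epsilon^2)$ action of the Morse end.

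Case 2: the positive end is in the strict Dehn twist region and the negative end is in the strict Morse region. Energy does not immediately forbid this, so I use the hypersurface $\{\mu=1-1/C\}$ together with Hein's Proposition \ref{prop:Hein}, exactly as in the proof of Proposition \ref{prop_hein_differential}. The portion of $u$ crossing the shell $(1-1/C,1-1/C^2)$ must carry energy at least $\epsilon_H$, which is independent of $\epsilon,\epsilon_G$. Meanwhile the Morse side, i.e. $u\cap\{\mu\ge 1-1/C\}$, has energy given by Stokes as $\mathcal{A}(\text{Morse end})-\int_{\Gamma}\alpha^k$ with $\int_\Gamma d\alpha=\pm1$. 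Applying Lemma \ref{lem:local_energy_inequality_differential} at $\mu_0=1-\epsilon$ on the appropriate sub-piece shows that $-\int_\Gamma \alpha^k\le k\epsilon_G\epsilon^2$-type bounds. The energy on the shell is therefore $O(\epsilon_G\epsilon^2)<\epsilon_H$, which contradicts Hein's bound.

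The main obstacle is the sign and orientation bookkeeping in applying the local energy inequality. One has to check that with $\int_\Gamma d\alpha=\pm1$ the term $k\epsilon_G(\mu_0-1)\int_\Gamma d\alpha$ has size $O(\epsilon_G\epsilon)$ and the right sign, so that $\int_\Gamma \lambda_{S^*S^n}$ is forced to be at least $-O(\epsilon_G\epsilon)$. Only then does the piece of $u$ between the shell and the Morse end have tiny energy, which is what lets Hein's $\epsilon$-independent lower bound produce the contradiction. I would try $\mu_0=1\pm\epsilon$ first and fall back to choosing $\mu_0$ just outside $(1-\delta,1+\delta)$ if needed.
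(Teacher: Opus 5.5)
Your Case 1 (positive end strictly in the Morse region, negative end strictly in the twist region) is correct, and it takes a simpler route than the paper. The paper cuts at $\mu_0=1-\epsilon$ and combines the local energy inequality with the bound $f_1\le \frac{3}{2}\epsilon_G\epsilon^2$. You instead observe that, because $G_1$ is convex and fixed on $[1-1/C,1-1/C^2]$, every strict twist orbit has action at least a fixed $c_0>0$, so comparing actions already gives negative energy. The paper's action computation confirms that the twist orbits carry the larger action, so your sign-convention fallback in Case 1 is never needed.

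The genuine gap is in Case 2: positive end strictly in the twist region, negative end $y$ strictly in the Morse region. Hein's proposition only gives a lower bound $\epsilon_H$ for the energy in the shell $(1-1/C,1-1/C^2)$. A contradiction therefore needs an upper bound on that shell energy, and none is available:
\begin{itemize}
\item The total energy is $\mathcal{A}(x)-\mathcal{A}(y)\ge c_0-O(\epsilon_G\epsilon^2)$ by your own estimate, so it is not small.
\item The cut at $\mu_0=1-\epsilon$ only controls $u\cap\{\mu\ge 1-\epsilon\}$. That set is disjoint from the shell once $\epsilon\le 1/C^2$.
\item Cutting at $\mu_0=1-1/C$ instead (even granting $J$ cylindrical there) gives only $E(u\cap\{\mu\ge\mu_0\})\le k\bigl(f_1(\mu_0)-\mu_0 f_1'(\mu_0)\bigr)-kf_1(y)$. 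This is of the same size as $c_0$, not $O(\epsilon_G\epsilon^2)$.
\end{itemize}
So the Hein route cannot close this case. Separately, for a negative Morse end Stokes gives $-\mathcal{A}(y)-\int_\Gamma\alpha^k$, not $+\mathcal{A}(y)$.

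The missing idea is to cut on the far side of the intersection region, at $\mu_0=1+\epsilon$, where $f_1'(\mu_0)=\epsilon_G\epsilon>0$. Orient $\Gamma$ as the boundary of $u\cap\{\mu\le 1+\epsilon\}$; then $\int_\Gamma d\alpha=-1$. Lemma \ref{lem:local_energy_inequality_differential} now forces $\int_\Gamma\lambda_{S^*S^n}\ge k\epsilon_G\epsilon>0$, and hence
\[
0\le \int_{u\cap\{\mu\ge 1+\epsilon\}} d\alpha^k = -(1+\epsilon)\int_\Gamma \lambda_{S^*S^n}+\frac{1}{2}k\epsilon_G\epsilon^2-kf_1(y)\le -(1+\epsilon)k\epsilon_G\epsilon+\frac{1}{2}k\epsilon_G\epsilon^2-kf_1(y)<0 .
\]
This contradicts pointwise nonnegativity of the energy density, which holds because the curvature vanishes. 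This is the paper's argument for this case. It uses no Hein estimate, and it is exactly the cut you proposed as a fallback in Case 1, applied to the configuration where it is actually needed.
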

\begin{proof}
Let us first deal with the case where $x$ is in the twist region and $y$ is in the Morse region. Take $\Gamma:= C\cap \{\mu=1+\epsilon\}$. The local energy inequality tells us that
\[
\int_{\Gamma} (\lambda_{S^*S^n} + k\epsilon_G\epsilon d\alpha) \geq 0.
\]
The assumption on the two orbits gives
\[
\int_{\Gamma}d\alpha = -1, \quad \int_{y} d\alpha =+1.
\]
So consider
\begin{align*}
0 &\leq \int_{C\cap \{\mu\geq 1+\epsilon\}} d\alpha^k \\
&= -\int_{\Gamma} \mu \lambda_{S^*S^n} - \int_{\Gamma}kf_1d\alpha-\int_{y}kf_1d\alpha \\
& \leq -(1+\epsilon)k\epsilon_G\epsilon +\frac{1}{2}k\epsilon_G\epsilon^2 - kf_1(y)
\end{align*}
which implies
\[
f_1(y) \leq -(1+\epsilon)\epsilon_G\epsilon +\frac{1}{2} \epsilon^2\epsilon_G
\]
which we can use to arrive at a contradiction by choosing $f_1$ appropriately. In particular, we have chosen $f_1$ to be positive and to have $C^0$ norm around $\epsilon_G\epsilon^2$ in all of the Morse regions.

We next consider what happens when $x$ is in the Morse region and $y$ is in the Dehn twist region.

In this case we let $\Gamma:= C \cap \{ \mu=1-\epsilon\}$. 
The local energy inequality implies
\[
\int_{\Gamma}\lambda_{S^*S^n}-k\epsilon_G\epsilon d\alpha \geq 0.
\]

Again, the assumption on the two orbits gives
\[
\int_{\Gamma} d\alpha =1, \quad \int_{x} d\alpha = +1.
\]
Then consider
\begin{align*}
0 &\leq \int_{C\cap \{\mu \geq 1-\epsilon\}} d\alpha^k \\
&= -\int_{\Gamma} \mu \lambda_{S^*S^n} - \int_\Gamma \epsilon_G k f_1 d\alpha + \int_x \epsilon_G k f_1 d\alpha\\
& \leq -k\epsilon_G\epsilon(1-\epsilon) - \frac{1}{2} k\epsilon_G \epsilon ^2 + k\epsilon_G f_1(x)
\end{align*}
which implies 
\[
f_1(x) \geq \epsilon_G (\epsilon(1-\epsilon) + \frac{1}{2} \epsilon^2),
\]
a contradiction.
\end{proof}

With the above we conclude the proof of Theorem \ref{thm:confinement_differential}.

\subsection{Wrapped Dehn twists}
In this section, we give the definition of fixed point Floer cohomology of wrapped Dehn twists on $T^*S^n$, largely following the conventions of \cite{ganatra_thesis}.

Suppose $(D^*S^n,\lambda)$ is the standard Liouville structure, with boundary $S^*S^n$. Denote its completion into a Liouville manifold by $(T^*S^n,\lambda)$, which we think of as attaching the collar $[1,\infty)_\mu\times S^*S^n$.

To construct the mapping tori for wrapped Dehn twists, we consider the adapted model iterated Dehn twist $\phi_{kG_1}$ on $D^*_{\mu \leq 1+\epsilon}S^n$ and take the mapping torus with the standard stable Hamiltonian structure. The Morse region is given by $T^*_{\mu\geq 1-\epsilon}S^n$ and the mapping torus is identified with $S^1\times T^*_{\mu\geq 1-\epsilon}S^n$. Instead of taking a $C^2$-small function $f_1$, we take the stable Hamiltonian structure $(d\alpha, d\theta_{T^*S^n} + H(\mu)d\alpha)$ where $H(\mu) = \frac{1}{2}\epsilon_G(\mu-1)^2$ near the Dehn twist region and is equal to $\frac{1}{2}\mu^2$ near infinity. We then use small time-dependent perturbations to break the Morse-Bott degeneracy. We denote the resulting symplectomorphism by
$\phi^k_\infty$ and call it the model \textit{wrapped Dehn twist}. We write $Y_{\phi^k_\infty}$ for the mapping torus of the associated wrapped Dehn twist.

We recall from \cite{ganatra_thesis} that an almost complex structure on $T^*S^n$ is called \textit{rescaled contact type} if, outside some compact set,
\[
\frac{1}{\mu} \theta_{T^*S^n} \circ J =d\mu
\]
and called \textit{$c$-rescaled contact type} if for some positive $c$ 
\[
\frac{c}{\mu} \theta_{T^*S^n} \circ J =d\mu.
\]
outside of some compact set.

We choose a (regular) almost complex structure on $Y_{\phi^k_\infty}$ of confinement type near the boundary of the Dehn twist region, which is the lift of a rescaled contact type almost complex structure on $T^*S^n$. With these data fixed, we may define fixed point Floer cohomology of wrapped Dehn twists. We note that if two orbits are in the Dehn twist region, the differential connecting them is also entirely contained in the Dehn twist region, by Lemma \ref{lem:confinement_differential_twist_region}. This in particular means we can view curves in the Dehn twist region as a finite sector of fixed point Floer cohomology of wrapped Dehn twists.\footnote{To show this differential is well defined we do need a maximum-principle-type argument to show that if we fix an input $x$ and output $y$, the curves connecting them do not escape to infinity. We can do this either using the dissipative estimates of \cite{ganatra_thesis}, or in this case leveraging the fact the curvature on $Y_{\phi^k_\infty}$ is zero, then noticing that Lemma \ref{lem:local_energy_inequality_differential} holds for rescaled contact type almost structures and applying Lemma \ref{lem:confinement_differential_twist_region}. }

For even iterations of wrapped Dehn twists, $HF^*(\phi_{\infty}^{2n})$ can be identified with $SH^*_{FL}(T^*S^n)$, up to grading shifts. We shall make further use of this correspondence in future work.

\section{The product structure for Dehn twists} \label{sec:prod_str_Dehn_twists}

We now define the count of $J$-holomorphic curves required to define the product structure on fixed point Floer cohomology.

Recall the locally Hamiltonian fibration $E^{m,n}$ defined in Section \ref{sec:cobordism_of_Dehn_twists}. Near the positive puncture of $B_3$, this looks like the symplectization of the stable Hamiltonian structure $(dt_\infty, d\alpha^{m.n})$, and near the two negative punctures, they are given by the symplectizations of the stable Hamiltonian structures $(dt_1,d\alpha^m)$ and $(dt_2, d\alpha^n)$. The main issue with these stable Hamiltonian structures is that they are Morse-Bott instead of Morse, so to properly define the product operation, we now perturb them to be Morse.

Near the positive puncture, the symplectic fibration looks like a symplectization, which we can write $[C,\infty)\times \mathcal{E}^{m+n}$ with $(dt_\infty, d\alpha^{m+n})$. In the interval $[C',C'+1]$ (for large $C'$) make a size-$\delta$ (in the $C^2$ norm, say) interpolation from $\alpha^{m+n}$ to $\alpha^{m+n}_\delta$. Do the same for the other two punctures of $B_3$. We call the resulting bundle $E^{m,n}_\delta\rightarrow B_3$, and the global 1-form we will denote by $\lambda^{m,n}_\delta$.

To consider curve counts in this symplectic fibration, we equip it with an almost complex structure that is compatible with the symplectic fibration and is of SFT type near the punctures. To be more precise:

\begin{definition}
An almost complex structure $J$ on $E^{m,n}_\delta$ is compatible if the following conditions are satisfied:
\begin{enumerate}
\item Near the punctures of $B_3$ where the symplectic fibration looks like the symplectization of a mapping torus, the almost complex structure is compatible in the sense of Definition \ref{defn:boundary_requirement_J}.

\item Away from the punctures of $B_3$, $J$ is compatible in the sense of a locally Hamiltonian fibration. To be specific, it preserves the vertical and horizontal sub-bundles, restricts to a compatible almost complex structure with respect to $d\lambda_\delta^{m,n}$, and the projection to the base is holomorphic.

\item Near the boundary of the fibers, $J$ is compatible with the symplectization structure.
\end{enumerate}
\end{definition}

For condition (3): in this paper when we say \textit{compatible} we also require compatibility near the boundary of $M$. A neighborhood of the boundary $\partial M$ is identified with the symplectization $(\epsilon_b/2, \epsilon_b]_\mu\times P$ of a contact manifold $P = \partial M$ with the contact form $\lambda_P$ and contact structure $\xi_P$. Condition (3) means that we choose an almost complex structure $J_0$ on $M$ such that near the boundary, $J_0$ is compatible with the described symplectization. In particular, $J_0$ preserves $\xi_P$ and sends $\partial_\mu$ to the Reeb vector field $R_P$ of $(P, \lambda_P)$. The almost complex structure $J$ is chosen to be the compatible almost complex structure that restricts to $J_0$ on the fiber.

For the purpose of enumerating $J$-holomorphic sections, the notion of compatibility is not quite good enough because there may be horizontal sections that are not transverse, which we may not be able to perturb away in the class of compatible almost complex structures (this is not an issue in symplectizations because all horizontal sections there are trivial cylinders). We will adapt the notion of tame almost complex structures as follows:

\begin{definition} \label{defn:tame_for_cobordism}
An almost complex structure $J$ on $E^{m,n}_\delta$ is tame if

\begin{enumerate}
\item Near the punctures of $B_3$, where the symplectic fibration looks like the symplectization of a mapping torus, the almost complex structure is compatible in the sense of Definition \ref{defn:boundary_requirement_J}.
\item Away from the punctures of $B_3$, we only require that $J$ be tamed by the ambient symplectic form $\Omega_{m,n} =\pi^{m,n *}\omega_{B_3} + d\lambda^{m,n}_\delta$.
\item The condition (4) on the boundary $P=\partial M$ in Definition \ref{defn:boundary_requirement_J} is satisfied.
\end{enumerate}

\end{definition}

We will prove confinement of $J$-holomorphic curves for compatible almost complex structures, then deduce confinement for nearby tame almost complex structures. With the notion of tame almost complex structures we may define the count that appears in the product:

Fix a tame almost complex structure  on $E^{m,n}_\delta$. Consider $x,y_1,y_2$ time-$1$ periodic orbits of $\alpha^{m+n}_\delta$, $\alpha^m_\delta$ and $\alpha^n_\delta$ respectively. Define

\begin{equation}
    \mathcal{M}^J(x;y_1,y_2): =\left\lbrace u: B_3\to X \;\middle|\;
  \begin{tabular}{@{}l@{}}
    $\pi_X\circ u$ is degree 1,\ $u$ is $J$-holomorphic, and\\
    $u$ is asymptotic to $y_1$, $y_2$ at its negative punctures and \\ $x$ at its positive puncture.
   \end{tabular}
  \right\rbrace
\end{equation}
We define the chain-level dot product as
\[
\langle y_1\bullet y_2,x\rangle = \#_{\ZZ_2} \mathcal{M}(x;y_1,y_2)
\]
A certain amount of work is required to show that for any tame $J$, this count is finite. We shall use a maximum principle to show $J$-holomorphic sections cannot escape from the boundary in Proposition \ref{prop:max_principle_B3}. Taking that for granted for now, to prove compactness, we need to note the Hofer energy of any curve above is bounded uniformly.

Recall the Hofer energy \cite{wendl2016lectures} of a $J$-holomorphic section $u$ is defined as 
    \[
    E^{\mathrm{Hofer}}(u) = \sup_{f\in \mathcal{T}}\int_{B_3}u^*\omega_f
    \]
    Where $$\mathcal{T} = \{f\in C^\infty(\mathbb{R}, (-\epsilon, \epsilon)) \; | \; f'>0 \text{ and } f(x)=x \text{ near } [-\delta, \delta]\}$$ for sufficiently small $\epsilon$ and $\delta$, and
    \[
    \omega_f=\begin{cases}
        \Omega_{m,n} & \text{away from the punctures} \\
        d(f(s_i)dt_i)+d\lambda^{m,n}_\delta & \text{near the three punctures} 
    \end{cases}
    \]
We see by exactness of $\Omega_{m,n}$ that the Hofer energy is uniformly bounded. This means we can either see elements of $\mathcal{M}(x;y_1,y_2)$ break into several SFT buildings near the punctures \cite{SFT_compactness} or bubble off $J$-holomorphic spheres \cite{McDuff_Salamon} in the interior of $B_3$ away from the punctures. First, note that sphere bubbling is not possible since $\omega_f$ is exact away from the punctures, and by a maximum principle all spheres are uniformly bounded away from a neighborhood of the punctures of $B_3$. Then next note that new SFT buildings are not possible for when the index is zero. From this we deduce that when the index is zero, the count of elements of $\mathcal{M}(x;y_1,y_2)$ is finite.

En route to proving confinement for compatible almost complex structures on $E^{m,n}_\delta$, we will actually first prove confinement for $E^{m,n}$ (the Morse-Bott version).

\begin{definition}
We say a compatible almost complex structure $J$ on $E^{m,n}$ is of confinement type if it is the unique lift of a confinement type almost complex structure $J_0$ on $M$,
see Definition \ref{defn:confinement type acs}.
\end{definition}

\begin{definition}
Let $J$ be a compatible almost complex structure on $E^{m,n}$ and let $u:B_3 \rightarrow E^{m,n}$ be a $J$-holomorphic section, then we define the vertical energy of $u$ to be
\begin{equation}
    E(u)=\frac{1}{2}\int_{B_3}|\partial_s u-\partial_s^\#|^2_{g_J}+|\partial_t u-\partial_t^\#|^2_{g_J} ds\wedge dt
\end{equation}
Here $(s,t)$ denotes a patch of holomorphic coordinates on $B_3$ and
$\partial_s^\#$ and $\partial_t^\#$ are the horizontal lifts of the vector fields $\partial_s$ and $\partial_t$, respectively, and $g_J$ denotes the metric on the vertical distribution determined by $d\lambda^{m,n}$ and $J$.
\end{definition}

\subsection{Confinement for the Dehn twist in the Morse-Bott setting} \label{sec:confinement_Morse_Bott}

We now turn to the confinement of $J$-holomorphic curves for the product. We shall first show the no crossing theorem for the Morse-Bott setting, for the bundle $E^{m,n}$. Then we will deduce the nondegenerate case from the correspondence between $J$-holomorphic curves in the nondegenerate setting and cascades in the Morse-Bott setting. 

We will still be applying the overall strategy for confinement by leveraging both the local energy inequality and Hein's energy inequality. A main technical issue is constructing the local Hamiltonian fibration so that the curvature vanishes: this allows us to read off the energy of a $J$-holomorphic curve from the action of the endpoint orbits. Since all of our confinement results require controlling the action of orbits, this part is essential.

We first verify 
\begin{proposition}
The locally Hamiltonian fibration $E^{m,n}$ has vanishing curvature.
\end{proposition}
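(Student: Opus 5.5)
The plan is to exploit that $E^{m,n}$ is a pullback: $E^{m,n}=g_{m,n}^*\mathcal{E}^1$, and its vertical $2$-form is $d\lambda^{m,n}$ with $\lambda^{m,n}=g_{m,n}^*\alpha^1$. The curvature of a locally Hamiltonian fibration with closed fiberwise-symplectic extension $\Omega=d\lambda$ is $R(v,w)=-\Omega(v^\#,w^\#)$, evaluated on horizontal lifts. So it suffices to show that $d\lambda^{m,n}$ vanishes on pairs of horizontal vectors. Since both the connection and $\lambda^{m,n}$ are pulled back along the bundle map $\tilde g:E^{m,n}\to\mathcal{E}^1$ covering $g_{m,n}:B_3\to S^1$, the horizontal distribution of $E^{m,n}$ is the pullback of the horizontal distribution of $\mathcal{E}^1\to S^1$. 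Concretely, $\tilde g_*$ maps horizontal vectors of $E^{m,n}$ isomorphically onto horizontal vectors of $\mathcal{E}^1$. This holds because horizontality is defined by $d\lambda$-orthogonality to the vertical bundle, and $\tilde g$ is a fiberwise symplectic isomorphism.

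Next, for horizontal $v^\#,w^\#$ at a point of $E^{m,n}$, naturality gives
\[
d\lambda^{m,n}(v^\#,w^\#)=d\alpha^1(\tilde g_*v^\#,\tilde g_*w^\#).
\]
Both $\tilde g_*v^\#$ and $\tilde g_*w^\#$ lie in the horizontal line field of $\mathcal{E}^1$, which has rank one because the base $S^1$ is one-dimensional. They are therefore proportional, and the antisymmetric form $d\alpha^1$ vanishes on them. Hence $R\equiv 0$. Equivalently, in local coordinates we can write $R(\partial_s,\partial_t)$ as a multiple of $dg_{m,n}\wedge dg_{m,n}=0$.

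The one point to check carefully is the identification of the horizontal lift. On the chart $B_3\times(D^*S^n\setminus T(0))$ we have $\lambda^{m,n}=\theta_{T^*S^n}-G_1(\mu)\,dg_{m,n}$, so the horizontal lift of a base vector $v$ is $v-dg_{m,n}(v)\,X$, where $X$ is the Hamiltonian vector field of the fiberwise Hamiltonian. Substituting this gives
\[
d\lambda^{m,n}(v^\#,w^\#)=-dG_1\wedge dg_{m,n}(\cdot,\cdot)+\dots,
\]
and all terms collapse to multiples of $dg_{m,n}(v)\,dg_{m,n}(w)-dg_{m,n}(w)\,dg_{m,n}(v)=0$. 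The Morse chart, where $\lambda^{m,n}=\theta_M+f_1\,dg_{m,n}$, works identically. The gluing near the zero section is handled by the pullback description, since $\mathcal{E}^1$ is globally a mapping torus over a $1$-dimensional base.

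I expect no real obstacle in this argument. The only subtlety is that the statement concerns the unperturbed $E^{m,n}$, not $E^{m,n}_\delta$. In $E^{m,n}_\delta$ the interpolation region $[C',C'+1]$ introduces genuine $s$-dependence and hence curvature, which is exactly why the argument is carried out in the Morse-Bott setting first.
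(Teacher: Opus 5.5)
Your proof is correct and follows the paper's route: pull back along $\tilde g_{m,n}$, use that $\tilde g_{m,n}$ is a fiberwise symplectic isomorphism so horizontal lifts go to horizontal vectors, and conclude that $d\lambda^{m,n}(v^\#,w^\#)=d\alpha^1(\tilde g_*v^\#,\tilde g_*w^\#)=0$. The only difference is the last step, where you use that the horizontal distribution of $\mathcal{E}^1\to S^1$ is a line field, whereas the paper maps into $\mathbb{R}\times\mathcal{E}^1$ and cites its earlier vanishing-curvature proposition. One small slip: $\tilde g_*$ is not an isomorphism on horizontal distributions, since it sends rank $2$ onto rank $1$ and kills the lifts of $\ker dg_{m,n}$. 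Your argument only uses that the image lies in the horizontal line field, so nothing breaks.
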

\begin{proof}
Recall that $E^{m, n}$ is the pullback bundle $g_{m, n}^*\mathcal{E}^1$. Denote by $\tilde{g}_{m, n}$ the map $E^{m, n}\to \mathbb{R}\times\mathcal{E}^1$ that covers $g_{m, n}: B_3\to\mathbb{R}\times S^1$ (notice that previously $g_{m, n}$ was defined as a map from $B_3$ to $S^1$: here we use the same notation to denote the composition of $g_{m, n}$ with the inclusion sending $S^1\to S^1\times\{0\}\subset \mathbb{R}\times S^1$). By the argument of Proposition \ref{prop: vanishing curvature for E^k}, the curvature form for $d\alpha^1$ vanishes on the bundle $\mathbb{R}\times \mathcal{E}^1$. Now for any vectors $u$ and $v$ in $T_{p_0} B_3$, denote by $u_0$, $v_0$ the vectors $dg_{m, n}(u)$ and $dg_{m, n}(v)$, and $u_0^\#$, $v_0^\#$ their horizontal lifts in $T_{\tilde{p}} \mathcal{E}^1$. We then have
\[
d\alpha^1(u_0^\#, v_0^\#) = 0.
\]

Let $u^\#$ and $v^\#$ be the horizontal lifts of $u$ and $v$ in $T_{\tilde{p}_0}E^{m,n}$ with respect to $d\lambda^{m,n} = \tilde{g}_{m,n}^*d\alpha^1$, then we have
\[
d\tilde{g}_{m,n}(u^\#) = u_0^\#, \quad d\tilde{g}_{m,n}(v^\#) = v_0^\#.
\]
To see this, first notice that $u_0^\#$ and $v_0^\#$ as defined above project to $u_0$ and $v_0$. To verify that they both lie in the horizontal distribution of $\mathbb{R}\times \mathcal{E}^1$, take any vertical vector $w_0$ in $T_{\tilde{p}_0}\mathcal{E}^1$ and consider the unique vertical vector $w\in T_pE^{m,n}$ such that $d\tilde{g}_{m,n}(w) = w_0$ (since $d\tilde{g}_{m,n}$ is an isomorphism restricted to the vertical distributions). We now have
\[
d\alpha^1(u_0^\#, w_0) = d\alpha^1(d\tilde{g}_{m,n}(u^\#), d\tilde{g}_{m,n}(w)) = d\lambda^{m,n}(u^\#, w) = 0.
\]

In particular, we have 
\[
d\lambda^{m,n}(u^\#, v^\#) = d\alpha^1(d\tilde{g}_{m,n}(u^\#), d\tilde{g}_{m,n}(v^\#)) = d\alpha^1(u_0^\#, v_0^\#) = 0.
\]

This shows that the curvature $R(u, v) = -d\lambda^{m,n}(u^\#, v^\#)$ vanishes.
\end{proof}

\begin{corollary}
Let $J$ denote a compatible almost complex structure on $E^{m,n}$.
Let $u:B_3\rightarrow E^{m,n}$ denote a $J$-holomorphic section positively asymptotic to $x$, and negatively asymptotic to $y_1$ and $y_2$, then
\[
E(u) = \mathcal{A}(x)-\mathcal{A}(y_1)-\mathcal{A}(y_2) 
\]
\end{corollary}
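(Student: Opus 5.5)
The plan is to follow the template of the cylinder case, where the identity $E(u)=\mathcal{A}(\gamma_+)-\mathcal{A}(\gamma_-)$ came from the energy formula together with vanishing curvature. First, for a compatible $J$ on $E^{m,n}$, the analogue of Lemma 8.2.9 of \cite{McDuff_Salamon} holds verbatim over any Riemann surface base. Working in local holomorphic coordinates $(s,t)$ on $B_3$ and splitting $du$ into its horizontal and vertical parts, it gives
\[
E(u)=\int_{B_3}u^*d\lambda^{m,n}+\int_{B_3}R(u)(\partial_s,\partial_t)\,ds\wedge dt .
\]
The pointwise identity behind this is the usual one for $J$ preserving the vertical and horizontal splittings and making the projection holomorphic. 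Here $\frac12(|v_s|^2+|v_t|^2)=d\lambda(v_s,v_t)$ for the vertical parts $v_s=\partial_su-\partial_s^\#$ and $v_t=\partial_tu-\partial_t^\#$, and expanding $u^*d\lambda$ leaves only the term $d\lambda(\partial_s^\#,\partial_t^\#)$, since the mixed terms vanish because the horizontal distribution is $d\lambda$-orthogonal to the vertical one. By the preceding proposition the curvature term vanishes identically, so $E(u)=\int_{B_3}u^*d\lambda^{m,n}$.

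Next, I will apply Stokes' theorem on a compact truncation $B_3^R$ of $B_3$, obtained by cutting off the three cylindrical ends at $|s_i|=R$. This gives $\int_{B_3^R}u^*d\lambda^{m,n}=\int_{\partial B_3^R}u^*\lambda^{m,n}$. With the boundary orientation, the circle at the positive end $z=\infty$ is oriented positively, and the two circles at the negative ends carry the opposite orientation. Near each puncture, $\lambda^{m,n}$ agrees with the primitive $\alpha^{m+n}$, $\alpha^m$ or $\alpha^n$ of the corresponding symplectization, as in property 2 of the construction in Section \ref{sec:cobordism_of_Dehn_twists}. Hence each boundary integral is $\int_{u(\{s_i=\pm R\})}\alpha^{\bullet}$.

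Finally, I will let $R\to\infty$, using the asymptotic convergence $u(s_i,\cdot)\to$ the corresponding orbit (in $C^0$ with derivatives; exponential convergence holds in the nondegenerate and Morse–Bott settings). In the limit the three loop integrals converge to $\mathcal{A}(x)$, $\mathcal{A}(y_1)$ and $\mathcal{A}(y_2)$, where the action is $\int_\gamma\alpha$ for each orbit. The left side converges to $E(u)$ by monotone convergence, because the integrand is pointwise nonnegative. This yields the stated formula.

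The only genuinely delicate point is the asymptotic convergence step in the Morse–Bott setting. Finiteness of energy and convergence to \emph{some} orbit in the Morse–Bott family follow from standard results, and since the action is constant on each Morse–Bott family this suffices. I would cite the standard asymptotic analysis rather than reprove it. A secondary check is that the orientation conventions (positive puncture at $\infty$) match the sign convention in the statement.
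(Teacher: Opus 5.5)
Your proposal is correct and follows the paper's proof. Vanishing curvature reduces $E(u)$ to $\int_{B_3}u^*d\lambda^{m,n}$, and Stokes' theorem then gives the action difference. You simply spell out the truncation, boundary orientations and asymptotic convergence that the paper's two-line proof leaves implicit.
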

\begin{proof}
Because the curvature vanishes, we have
\[
E(u) =\int_{B_3}u^*d\lambda^{m,n},
\]
and we apply Stokes' theorem.
\end{proof}

The next proposition shows that all $J$-holomorphic sections that we count do not approach $\partial M$.
\begin{proposition} \label{prop:max_principle_B3}
Let $J$ be a compatible almost complex structure of confinement type, and let $u$ denote a $J$-holomorphic section. Near the boundary of $M$ the symplectic fibration looks like
\[
B_3 \times (\epsilon_b/2,\epsilon_b]\times \partial M.
\]
Then $u$ does not intersect $B_3 \times \{\eta\} \times \partial M$ for any $\eta \in (\epsilon_b/2,\epsilon_b]$
\end{proposition}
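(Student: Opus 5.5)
The plan is to reduce the statement to the maximum principle of Lemma \ref{mp}, now applied over the thrice-punctured sphere $B_3$ instead of the cylinder, and then invoke the strong maximum principle.

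\textbf{Local form of the fibration.} Near $\partial M$ the fibration $E^{m,n}=g_{m,n}^*\mathcal{E}^1$ is trivial: it is $B_3\times(\epsilon_b/2,\epsilon_b]\times\partial M$. There $\lambda^{m,n}=\theta_M+f_1\,dg_{m,n}$, and by Remark \ref{rmk:H_near_boundary} we have $f_1=\theta_i\mu_i$ on each boundary component. The curvature vanishes, so the horizontal lifts are explicit. For a local holomorphic coordinate $z=s+it$ on $B_3$, write $dg_{m,n}=a\,ds+b\,dt$. Then $\partial_s^\#=\partial_s-a\theta_iR$ and $\partial_t^\#=\partial_t-b\theta_iR$, where $R$ is the Reeb field of $\lambda_P$.

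\textbf{Deriving the equation for $\mu_i$.} Write $u=(z,\mu_i(z),y(z))$. By the confinement-type condition, $J_0$ sends $\partial_\mu\mapsto R$ and preserves $\xi_P$. Projecting the holomorphic section equation $(\partial_su-\partial_s^\#)+J(\partial_tu-\partial_t^\#)=0$ onto the $\partial_\mu$ and $R$ components gives
\[
\partial_s\mu_i-\lambda_P(\partial_ty)+b\theta_i=0,\qquad \partial_t\mu_i+\lambda_P(\partial_sy)-a\theta_i=0 .
\]
Apply $\partial_s$ to the first equation and $\partial_t$ to the second, then add. Using $d(dg_{m,n})=0$, that is $\partial_sb=\partial_ta$, we get
\[
\Delta\mu_i=\partial_s\bigl(\lambda_P(\partial_ty)\bigr)-\partial_t\bigl(\lambda_P(\partial_sy)\bigr)=d\lambda_P(\partial_sy,\partial_ty).
\]
Since $J_0$ is compatible on $\xi_P$, the right-hand side is $\ge 0$. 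So $\mu_i$ is subharmonic on the open set $u^{-1}$ of the collar. The key point is that $g_{m,n}$ is only closed, not harmonic, but closedness is exactly what makes the $\theta_i$ terms cancel. This mirrors the "simple calculation" in Lemma \ref{mp}, where $a=0$ and $b$ is constant.

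\textbf{Concluding.} The level set $\mu=\epsilon_b$ (the outer boundary) and the convention on $\mu$ need to be matched. Subharmonicity of the collar coordinate gives no interior maxima, so $u$ cannot touch the boundary from inside. In the neighborhood of the three punctures, the asymptotic orbits lie away from the collar, because $f_1$ has small irrational slope and hence no fixed points there. So the portion of $u$ in the collar is compact with boundary on the inner level set. If $u$ met some level $\{\eta\}$, then $\mu_i\circ u$ would attain an interior maximum on this compact region. The strong maximum principle would then force $\mu_i$ to be constant, contradicting the fact that the ends lie outside the collar.

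\textbf{Main obstacle.} The obstacle is bookkeeping: orientation and sign conventions, so that the sign of $d\lambda_P(\partial_sy,\partial_ty)$ actually yields subharmonicity in the outward direction. The fix, if needed, is to use $-\mu$, or to use $e^{\mu}$ with the Liouville convention.
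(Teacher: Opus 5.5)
Your proposal is correct and is essentially the paper's proof: write $dg_{m,n}=F\,ds+G\,dt$ in local conformal coordinates, use the horizontal lifts $\partial_s^\#=\partial_s-\theta_iFR$ and $\partial_t^\#=\partial_t-\theta_iGR$ to read off the $\partial_\mu$ and $R$ components of the holomorphic section equation, cancel the Hamiltonian terms using closedness $\partial_tF=\partial_sG$, and conclude by the maximum principle. Your version is slightly sharper than the paper's, which asserts that $\mu_i$ is harmonic; your identity $\Delta\mu_i=d\lambda_P(\partial_sy,\partial_ty)\ge 0$ correctly gives subharmonicity, which is what the maximum principle needs (the $\theta_i$ terms in your two displayed equations have the opposite sign from what your stated lifts produce, but this is harmless because closedness cancels them either way).
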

\begin{proof}
Let $(s, t)$ be local conformal coordinates on $B_3$, and let $dg_{m, n} = Fds+Gdt$ with respect to these local coordinates. Suppose $u$ has the form $(s, t)\mapsto (s, t, \mu_i, y)$ with respect to these local coordinates, near a component of $\partial M$ where $f_1 = f_1(\mu_i)$. It then follows with these local coordinates that
\[
\partial_s^\# = \partial_s - f_1'(\mu_i)FR, \quad \partial_t^\# = \partial_t-f_1'(\mu_i)GR,
\]
where $R$ is the Reeb vector field of $(\partial M = P, \lambda_P)$. The compatibility assumption implies that $J(\partial_s^\#) = \partial_t^\#$.

On the other hand, the assumption that $u$ is $J$-holomorphic implies that $J(\partial_s + \frac{\partial\mu_i}{\partial s}\partial_{\mu_i}+\lambda_P(\frac{\partial y}{\partial s})R) = \partial_t + \frac{\partial\mu_i}{\partial t}\partial_{\mu_i}+\lambda_P(\frac{\partial y}{\partial t})R$, which then implies that
\begin{equation}
\begin{cases}
\frac{\partial \mu_i}{\partial t} = -\lambda_P(\frac{\partial y}{\partial s})-f_1'(\mu_i) F\\
\frac{\partial \mu_i}{\partial s} =\lambda_P(\frac{\partial y}{\partial t})+f_1'(\mu_i) G.
\end{cases}
\end{equation}
Since $dg_{m,n}$ is closed, we have $F_t = G_s$. It then follows that $\mu_i$ is a harmonic function, and the conclusion follows from a maximum principle.
\end{proof}
\subsubsection{Confinement using Hein's methods}

We now study the confinement of $J$-holomorphic pairs of pants using Hein's methods. To start, we review the relevant structure in the Morse region.

Recall that we can write the symplectic fibration over the Morse region as 
\[
B_3 \times M_{\mu\geq 1-\epsilon}
\]
with the vertical two-form given by
\[
\lambda^{m,n} = d(\theta_M + f_1 dg_{m,n})
\]
where $f_1= -G_1$ for $\mu \in [1-\epsilon, 1+\epsilon]$ and we take $f_1$ to be a Morse function whose value is between $\frac{1}{2}\epsilon_G\epsilon^2$ and $\frac{3}{2}\epsilon_G \epsilon^2$ on the rest of the Morse region.

We fix some $C\gg 1$ so that there are no periodic orbits of $-(m+n)G_1$ in the interval
\[
\mu \in [1-1/C, 1)
\]
Fix an almost complex structure $J_0$ on the fiber $[1-1/C,1-1/C^2]_\mu \times S^*S^n$ that is compatible with the symplectization structure on $[1-1/C,1-1/C^2]_\mu \times S^*S^n$ as in Definition \ref{defn:confinement type acs}, we lift it to almost complex structures of confinement type in bundles $\mathbb{R} \times \mathcal{E}^*$\footnote{Technically $J_0$ only specifies what the almost complex structure is on $\mathbb{R}\times \mathcal{E}^*$ on the subset $\mathbb{R}\times S^1 \times [1-1/C, 1-1/C^2] \times S^*S^n$, but this is enough for our purposes.} for $*=m,n,m+n$ and we denote the lifted almost complex structures by $J_\infty^*$ for $*=m,n,m+n$.

 Then there is a fixed $\epsilon_C$ so that Hein's lemma applies:
consider $J$-holomorphic sections of $\mathbb{R}\times \mathcal{E}^*$ (here $*=m,n, m+n$) and suppose $D$ is a subset of $\RR \times S^1$: then any $J$-holomorphic curve $u:D \rightarrow D \times \mathcal{E}^*$ which intersects both $\mu=1-1/C$ and $\mu=1-1/C^2$ must have vertical energy
$>\epsilon_C$.

Now we are ready to state the main confinement proposition using Hein's inequality.

\begin{proposition} \label{prop:morse_bott_all_morse}
Suppose $x,y_1,y_2$ are all in the Morse region (this includes the critical points on $\mu=1$). For $\epsilon$ and $\epsilon_G$ sufficiently small, we can find confinement-type almost complex structure $J$ so that
any element $u\in \mathcal{M}(x;y_1,y_2)$ does not intersect the hypersurface $\mu = 1-1/C$. The conditions we put on $J$ are only in the region $1-1/C\leq \mu\leq1+\epsilon$.
\end{proposition}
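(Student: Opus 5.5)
The argument follows the proof of Proposition \ref{prop_hein_differential}: an energy bound from the corollary plus a local Hein estimate near the punctures. The new difficulty is that Hein's lemma was stated for cylinders in $\mathbb{R}\times\mathcal{E}^*$, while $u$ is a section over all of $B_3$, including its thick part where $dg_{m,n}$ is not a standard cylindrical form.

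\textbf{Step 1 (energy bound).} By the vanishing-curvature corollary, $E(u)=\mathcal{A}(x)-\mathcal{A}(y_1)-\mathcal{A}(y_2)$. For orbits in the Morse region (including those on $\mu=1$), the actions are $k\cdot(\text{value of } f_1)$ with $k=m+n,m,n$ respectively, since $\theta_M$ restricted to a constant orbit vanishes. Because $f_1$ takes values in $[0,\tfrac32\epsilon_G\epsilon^2]$ there, we get
\[
E(u)\le \tfrac{3}{2}(m+n)\epsilon_G\epsilon^2 .
\]
This tends to $0$ as $\epsilon,\epsilon_G\to0$, uniformly in $J$.

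\textbf{Step 2 (choice of $J$ over the thick part).} I choose $g_{m,n}$ so that, away from the cylindrical ends, the whole of $B_3$ is covered by finitely many charts in which $dg_{m,n}=c\,dt$ for a constant $c\in\{m,n,m+n\}$ (or $dg_{m,n}=0$ on a compact piece). On each such chart the fibration over $\mu\in[1-1/C,1-1/C^2]$ is isomorphic to a piece of $\mathbb{R}\times\mathcal{E}^c$, equipped with the Hamiltonian $-cG_1(\mu)$. In this window $G_1$ is independent of $\epsilon,\epsilon_G$ and has no $1$-periodic orbits.

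If $g_{m,n}$ cannot be made exactly of this form, I will first try the following fallback. Keep $J$ equal to the lift of the fixed $J_0$, and use that the equations in this window are those of a Floer equation with Hamiltonian-valued $1$-form $-G_1(\mu)\,dg_{m,n}$. Hein's argument is local: it combines the monotonicity lemma with the absence of orbits. It should therefore go through with a constant $\epsilon_C'$ that depends only on $G_1$, $J_0$ and $dg_{m,n}$ restricted to the window, and hence is again independent of $\epsilon,\epsilon_G$.

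\textbf{Step 3 (the Hein estimate).} Suppose $u$ meets $\mu=1-1/C$. Its asymptotics lie in $\mu\ge1-\epsilon>1-1/C^2$, so $u$ must also cross $\mu=1-1/C^2$. Take a component of $u^{-1}(\{1-1/C\le\mu\le 1-1/C^2\})$ that meets both boundary levels. Restricted to a chart from Step 2 in which this crossing occurs, Hein's Proposition \ref{prop:Hein}, applied via the Remark after it, gives vertical energy $>\epsilon_C$ on that piece. The vertical energy density is pointwise nonnegative, so the total satisfies $E(u)>\epsilon_C$.

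There is one caveat here. The crossing component might leave a single chart, but it then crosses a chart boundary; I will handle this by using charts with overlaps of fixed size and applying the local, monotonicity form of Hein's bound.

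Now choose $\epsilon,\epsilon_G$ so small that $\tfrac32(m+n)\epsilon_G\epsilon^2<\min(\epsilon_C,\epsilon_C')$. This contradicts Step 1.

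\textbf{Main obstacle.} The key step is making Hein's constant uniform over the thick part of $B_3$, where $dg_{m,n}$ is not a standard cylindrical form. I expect to resolve this either with the chart construction in Step 2 or with the local, monotonicity-based version of Hein's argument. Everything else follows the proof of Proposition \ref{prop_hein_differential}. Note that the only conditions placed on $J$ are its form on $1-1/C\le\mu\le1+\epsilon$, as the statement requires.
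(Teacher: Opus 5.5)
Your Step 1 is fine, and you correctly identify the thick part of $B_3$ as the crux. However, neither proposed resolution works, so there is a genuine gap.

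\textbf{The chart construction in Step 2 cannot exist.} For sections over a chart to be Floer trajectories in $\mathbb{R}\times\mathcal{E}^c$ for the lifted $J$, the chart must be holomorphic. Then $dg_{m,n}=c\,dt$ forces $g_{m,n}$ to be locally harmonic. If in addition $dg_{m,n}=0$ on an open piece, unique continuation gives $dg_{m,n}\equiv 0$, which is absurd. So $dg_{m,n}$ would have to be nowhere zero. But a nowhere-vanishing closed form equal to $c_i\,dt_i$ on the three ends has kernel a line field on the compact pair of pants transverse to all three boundary circles, which is impossible since $\chi=-1$. (Equivalently, $2\partial g_{m,n}$ would be a meromorphic $1$-form on $\mathbb{CP}^1$ with three simple poles, which must have a zero.) So $dg_{m,n}$ vanishes somewhere in the thick part, and there is no cylindrical structure there.

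\textbf{The fallback does not repair this.} Hein's bound rests on the cylinder structure rather than on monotonicity. When the energy in the shell is small, the loops $t\mapsto u(s,t)$ are close to $1$-periodic orbits of the autonomous Hamiltonian, and there are none in the shell. Over a disk in the base, the absence of periodic orbits gives no information: through every point of the window there are local horizontal sections of zero vertical energy. Monotonicity applied to the graph only bounds its $\Omega$-area, which is dominated by $\int\omega_{B_3}$ rather than by the vertical energy. Your caveat in Step 3 is also a real problem. A crossing component of $u^{-1}(\{1-1/C\le\mu\le 1-1/C^2\})$ can be split between the thick part and an end, or between charts. Then no single cylindrical piece crosses the whole shell, which is exactly what Hein's proposition needs.

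\textbf{The missing idea is to handle the compact thick part $C_0$ by compactness, not by an energy estimate.} This is what the paper does.
\begin{itemize}
\item Argue by contradiction with $\epsilon_k,\epsilon_{G,k}\to 0$ and $J_k$ fixed on the window.
\item The vertical energy tends to $0$, and exactness excludes sphere bubbles. So $u_k|_{C_0}$ has uniform $C^1$ bounds and converges to a section with zero vertical energy, i.e.\ a horizontal section.
\item Over the trivial part of the bundle, horizontal sections have constant $\mu$. A limit that meets the non-trivial part near the zero section is contained in it.
\item Hence, for large $k$, $u_k(C_0)$ misses $\{\mu=1-1/C\}$ (after adjusting $C$ slightly if the limiting $\mu$ equals $1-1/C$). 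Any point of $u_k$ on that hypersurface therefore lies in a single cylindrical end.
\item That end is asymptotic to an orbit with $\mu\ge 1-\epsilon$, so it alone crosses the full shell $[1-1/C,1-1/C^2]$.
\item Hein's cylinder estimate then gives a $k$-independent lower bound on the vertical energy, contradicting Step 1.
\end{itemize}
The resulting smallness of $\epsilon,\epsilon_G$ is non-effective, but the statement only asks for existence.
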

\begin{proof}
\textbf{Step 0:}
The proof is by contradiction.
We take a sequence of positive constants $\epsilon_k, \epsilon_{G,k} \rightarrow 0$ as $k \to \infty$. We assume $-G_{1,k} = \frac{1}{2}\epsilon_{G,k} (\mu-1)^2$ on $[1-1/C^4,1+\epsilon]$, and that $G_{1,k}$ is fixed and does not depend on $k$ for $\mu \leq 1-1/C^2$ and agrees with the $G_1$ we specified at the beginning of this subsection (and between these regions we interpolate via a smooth convex function). We assume the only orbits in the region $\mu\in [1-1/C,1+\epsilon]$ are the ones at the $\mu=1$ locus.

The resulting $G_{1,\infty}$ is then smooth and vanishes in near $\mu=1$. We also choose a sequence of confinement type almost complex structures
as follows. Over the $\mu\in [1-1/C,1+\epsilon]$ of the $E^{m,n}$, the bundle is 
\[
B_3 \times [1-1/C,1+\epsilon] \times S^*S^n
\]
with $\lambda^{m,n}=d(\theta_M-G_{1,k}dg_{m,n})$. We fix $J_0$ as a symplectization compatible almost complex structure on $[1-1/C,1+\epsilon] \times S^*S^n$, and lift it to a compatible almost complex structure $J_k$ on $B_3 \times [1-1/C,1+\epsilon] \times S^*S^n$, and extend this to be a compatible almost complex structure on rest of $E^{m,n}$.

We then obtain a sequence $J_k$ each of confinement type that converges in $C^\infty_{loc}$ as $k \to \infty$ to a fibration-compatible almost complex structure $J_\infty$; the almost complex structures $J_k$ are independent of $k$ in the interval $\mu \in [1-1/C,1-1/C^2]$, and that furthermore, in the fixed cylindrical neighborhoods near the punctures of $B_3$, when further restricted to the region $\mu\in [1-1/C,1-1/C^2]$, the almost complex structure $J_\infty$ agrees with the previously fixed almost complex structures of confinement type $J_\infty^m,J_\infty^n,J_\infty^{m+n}$. \footnote{The properties of $J_k$ and the resulting $J_\infty$ as stated in this paragraph are in fact all that we need in this proof. In the previous paragraph we gave an explicit construction for such $J_k$ and $J_\infty$, but other constructions are certainly possible. In particular we did not really need $J_0$ to be compatible with the symplectization structure on the fiber for all $\mu\in [1-1/C,1+\epsilon]$, we just need it for $\mu\in [1-\epsilon,1+\epsilon]\setminus (1-\delta,1+\delta)$ so that its lift is of confinement type, and the rest of the proof goes through.}

We suppose that for all $k\geq 1$ there exists $u_k\in \mathcal{M}^{J_k}(x;y_1,y_2)$ which intersects $\mu =1-1/C$ (in the $k\rightarrow \infty$ limit there is an identification of the critical points of $f_{1,k}$ for different values of $k$, which we suppress from our notation). 

\textbf{Step 1:}
Our first observation is that the vertical energy of $u_k$ goes to zero as $k \to \infty$, since in the Morse-Bott situation
$E(u_k) = \mathcal{A}(x) -\mathcal{A}(y_1) - \mathcal{A}(y_2)$.

\textbf{Step 2:}
We decompose the domain $B_3$ of the curve $u$ into four pieces, $B_3= C_1\cup C_2 \cup C_3 \cup C_0$, where $C_0$ is a pair of pants with boundary, and $C_i$ are semi-infinite cylinders where $dg_{m,n} = m dt, ndt, (m+n)dt$. In other words, the curves $u_k$ restricted to $C_1, C_2, C_3$ gives $J_k$-holomorphic sections of $\mathbb{R}\times \mathcal{E}^{m},\mathbb{R}\times \mathcal{E}^{n}$ or $\mathbb{R}\times \mathcal{E}^{m+n}$.

\textbf{Step 3:}
We next note the following lemma:
\begin{lemma}
We have $\| du_k\|_{C_0} \leq C'$: in other words, there is a uniform upper bound on the derivative of $u_k$ restricted to $C_0$.
\end{lemma}
To see this, the maps $u_k: C_0\rightarrow X$ 
may be viewed as $J_k$-holomorphic sections of the almost complex fibration $E^{m,n} \rightarrow C_0$. Suppose the lemma does not hold: then we can find $z_k\in C_0$ converging to some $z' \in C_0$ (potentially in the boundary of $C_0$) so that $\|du_k(z_k)\| \rightarrow \infty$. By Gromov compactness \cite{McDuff_Salamon}, this corresponds to a $J_{\infty}$-holomorphic sphere bubble in the fiber over $z\dash$, which we know is impossible since the whole fibration is exact with vertical $2$-form given by $d\lambda ^{m,n}$. Hence this cannot happen, and we have the lemma. 

\textbf{Step 4:} Now by the Arzel\`a-Ascoli theorem, we may pick a subsequence of $\epsilon_k$ (which by an abuse of notation we continue to denote by $\epsilon_k$), so that $u_k: C_0 \rightarrow E^{m,n} $ converges in $C^\infty$ to a $J_{\infty}$-holomorphic map, $u_\infty: C_0 \rightarrow E^{m,n}$ for the almost complex structure $J_\infty$.  Also since $\epsilon_k\rightarrow 0$ as $k \to \infty$, the resulting limit $u_{\infty}$ has zero vertical energy, so it must be a horizontal section.

We next need to understand the map $u_\infty: C_0\rightarrow X$ with zero vertical energy. We can split the fiber into two pieces: the Dehn twist region and the Morse region. 

We first suppose that $u_\infty |_{C_0}$ intersects the Morse region. In the Morse region the bundle splits as $C_0 \times M_{\mu>1-1/C^2}$, and all maps with zero vertical energy are constants, hence $C_0$ must be entirely contained in the Morse region. Then for large enough $k$, the restriction $u_k|_{C_0}$ lives in $\{\mu>1-1/C^2 \}$. This means if $u_k$ intersects $\{\mu=1-1/C\}$, it must come from the restriction of $u_k$ to $C_1,C_2$ or $C_3$. Thus $u_k$ has a cylindrical part that crosses the barrier $\mu \in [1-1/C,1-1/C^2]$. Noting that $J_k$ is independent of $k$ in this region, Hein's lemma (Proposition \ref{prop:Hein}) provides a ($k$-independent) lower bound on the vertical energy of $u_k$, contradicting the fact that it goes to zero.

Next we consider what happens when $u_\infty|_{C_0}$ intersects the Dehn twist region. Recall that the Dehn twist region is denoted by $g_{m,n}^*B^{1}_{\mu\leq 1+\epsilon}$. We can decompose the Dehn twist region into two parts, $g_{m,n}^*B^{1}_{\epsilon''\leq \mu\leq 1+\epsilon}$ and $g_{m,n}^*B^{1}_{ \mu\leq \epsilon''}$. The difference is that $g_{m,n}^*B^{1}_{\epsilon''\leq \mu\leq 1+\epsilon}$ is again a trivial bundle, whereas $g_{m,n}^*B^{1}_{ \mu\leq \epsilon''}$ is not a trivial bundle.
Now we observe that all horizontal sections in $g_{m,n}^*B^{1}_{\epsilon''\leq \mu\leq 1+\epsilon}$ have constant $\mu$ values. This means if $u_\infty|_{C_0}$ intersects $g_{m,n}^*B^{1}_{ \mu\leq \epsilon''}$ then it must be strictly contained in it, and for large enough $k$, this means that $u_k|_{C_1}$ (or $C_2$ or $C_3$) must cross the $\mu =1-1/C$ barrier, a contradiction. 

If $u_\infty|_{C_0}$ is contained in $g_{m,n}^*B^{1}_{\epsilon''\leq \mu\leq 1+\epsilon}$, it must have constant $\mu$ value. If this $\mu$ satisfies $\mu<1-1/C$, the argument is the same as above, and if $\mu >1-1/C$ the argument is the same as if $u_\infty|_{C_0}$ is in the Morse region. The case of $\mu=1-1/C$ can be covered as well: we can always either slightly shrink $C$ or apply the above argument to a slightly larger $C'>C$.
\end{proof}
Another way to state the above proposition is that curves with all three ends in the Morse region stay in the Morse region. In fact the following corollary follows from the maximum principle stated in Proposition \ref{prop:max_principle_B3}:

\begin{corollary}
Let $u\in \mathcal{M}(x;y_1,y_2)$ with $x,y_1,y_2$ all in Morse region. Choosing the same Floer data as above, for any $\mu_0<1$, $u$ does not intersect the $\mu=\mu_0$ hypersurface.
\end{corollary}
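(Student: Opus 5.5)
The plan is to upgrade Proposition~\ref{prop:morse_bott_all_morse} from the single hypersurface $\mu=1-1/C$ to every level $\mu_0<1$ by a maximum principle in the shell $1-1/C\le\mu<1$, as was done for the differential after Proposition~\ref{prop_hein_differential}. Fix the Floer data of Proposition~\ref{prop:morse_bott_all_morse}, so every $u\in\mathcal{M}(x;y_1,y_2)$ with $x,y_1,y_2$ in the Morse region misses $\{\mu=1-1/C\}$. Since $B_3$ is connected and each asymptotic orbit lies in the Morse region, where $\mu\ge 1-\epsilon>1-1/C$, the function $\mu\circ u$ (defined wherever $u$ lies in the collar $B_3\times(1-1/C,1+\epsilon)\times S^*S^n$) satisfies $\mu\circ u>1-1/C$ on all of $B_3$. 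Thus for $\mu_0\le 1-1/C$ there is nothing to prove, and it remains to handle $\mu_0\in(1-1/C,1)$.

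On $V:=u^{-1}(\{\mu<1\})$, the curve lies in $B_3\times(1-1/C,1)\times S^*S^n$, where $\lambda^{m,n}=\mu\lambda_{S^*S^n}+f_1(\mu)\,dg_{m,n}$ with $f_1=-G_1$ a function of $\mu$ alone, and $J$ is the lift of a $J_0$ compatible with the symplectization structure. (If the Morse--Bott perturbation region $(1-\delta,1+\delta)$ meets this range, I restrict to $\mu_0\le1-\delta$ first and treat the thin band separately; in the Morse--Bott setting $E^{m,n}$ there is no perturbation, so $J_0$ may be chosen of symplectization type on the whole shell.) Then I repeat the computation of Proposition~\ref{prop:max_principle_B3} verbatim. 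In local conformal coordinates with $dg_{m,n}=F\,ds+G\,dt$, the horizontal lifts are $\partial_s^\#=\partial_s-f_1'(\mu)FR$ and $\partial_t^\#=\partial_t-f_1'(\mu)GR$. The $J$-holomorphic equation gives $\partial_t\mu=-\lambda(\partial_s y)-f_1'(\mu)F$ and $\partial_s\mu=\lambda(\partial_t y)+f_1'(\mu)G$.

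Differentiating and using $F_t=G_s$ together with $d\lambda=d\lambda_{S^*S^n}$ on the contact directions, I expect
\[
\Delta\mu = f_1''(\mu)\,(F\,\partial_s\mu+G\,\partial_t\mu)+(\text{nonnegative term } |\pi_\xi\partial y|^2 \text{ type}).
\]
With $f_1=-G_1$ convex here, this is an elliptic inequality $\Delta\mu-b\cdot\nabla\mu\ge0$. Hence $\mu\circ u$ satisfies the strong maximum principle, and since $\Delta\mu - b\cdot\nabla\mu\ge 0$ is the subharmonic-type direction, it is the \emph{minimum} I need to control. To get that direction, I would either use the reversed sign coming from the orientation (as in the Hutchings-type inequality, Lemma~\ref{lem:local_energy_inequality_differential}), or, more cleanly, argue via the local energy inequality instead.

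The main obstacle is getting this sign right, and I would guard against it by the following alternative. Suppose $u$ meets $\{\mu=\mu_0\}$, with $\mu_0$ chosen regular. Set $\Gamma=u\cap\{\mu=\mu_0\}$, oriented as the boundary of $u\cap\{\mu\le\mu_0\}$. This piece of $u$ is a compact surface with no punctures, because all asymptotic orbits have $\mu\ge 1-\epsilon>\mu_0$, and it lies where curvature vanishes. Stokes then gives
\[
0\le\int_{u\cap\{\mu\le\mu_0\}}d\lambda^{m,n}=\int_\Gamma \mu_0\lambda_{S^*S^n}+f_1(\mu_0)\,dg_{m,n}.
\]
The product analogue of Lemma~\ref{lem:local_energy_inequality_differential}, which I would prove by the same pointwise computation with $\partial_\alpha$ replaced by the horizontal lifts, gives $\int_\Gamma\lambda_{S^*S^n}+f_1'(\mu_0)\,dg_{m,n}<0$. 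Meanwhile $\int_\Gamma dg_{m,n}=0$, since $\Gamma$ bounds a compact subsurface of $B_3$ and $dg_{m,n}$ is closed. Hence $\int_\Gamma\lambda_{S^*S^n}<0$, which contradicts the Stokes inequality unless $\Gamma=\emptyset$.
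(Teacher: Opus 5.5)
\textbf{The sign in your final step is reversed, so the contradiction you derive does not exist.} Your reduction to $\mu_0\in(1-1/C,1)$, the Stokes bound for the inner piece, and $\int_\Gamma dg_{m,n}=0$ are all fine. The problem is the local energy inequality. You orient $\Gamma$ as the boundary of $u\cap\{\mu\le\mu_0\}$, which is exactly the orientation of Lemma~\ref{lem:local_energy_inequality_differential} and its product analogue. With that orientation the computation gives
\[
\int_\Gamma \lambda_{S^*S^n}+f_1'(\mu_0)\,dg_{m,n}>0 \qquad (\Gamma\neq\emptyset),
\]
not $<0$. The outward normal of the sublevel piece points toward increasing $\mu$. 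Since $u$ is $J$-holomorphic, $J\Gamma'$ therefore points toward decreasing $\mu$, and so $d\mu(J\Gamma')=-\lambda_{S^*S^n}(\Gamma')-f_1'(\mu_0)\,dg_{m,n}(\Gamma')<0$.

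A quick check: your own Cauchy--Riemann equations say precisely that $u^*\bigl(\lambda_{S^*S^n}+f_1'(\mu)\,dg_{m,n}\bigr)=\ast d\mu$. So the left-hand side is the outward flux $\int_\Gamma\partial_\nu\mu$, which is positive because $\mu<\mu_0$ just inside $\Gamma$. Combined with $\int_\Gamma dg_{m,n}=0$, this gives $\int_\Gamma\lambda_{S^*S^n}>0$. That has the \emph{same} sign as your bound $\mu_0\int_\Gamma\lambda_{S^*S^n}=E(u|_{\{\mu\le\mu_0\}})\ge 0$, so nothing is contradicted. In the paper this inequality produces contradictions only when Stokes is applied to the \emph{outer} piece $u\cap\{\mu\ge\mu_0\}$, whose boundary is $-\Gamma$. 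That requires the outer piece to contain no punctures, i.e.\ all ends lie inside, which is the reverse of your situation.

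So both of your tools are one-sided. Your PDE computation is essentially right; the drift term is $f_1''(\mu)(G\,\partial_s\mu-F\,\partial_t\mu)$, giving $\Delta\mu-f_1''(\mu)(G\,\partial_s\mu-F\,\partial_t\mu)=|\pi_\xi\partial_s y|^2\ge0$. The flux inequality is just the integrated form of this. Both forbid interior \emph{maxima} of $\mu\circ u$: they stop a curve from crossing outward past a level lying beyond all its ends. Neither stops a curve whose ends sit at $\mu\ge 1$ from dipping into $\{\mu<1\}$, which is exactly what the corollary must exclude. Already for constant $f_1$, a finite-energy plane in a symplectization with cylindrical $J$ has $\mu$ attaining an interior minimum. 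Its part in a thin sublevel set is precisely the kind of compact piece your argument would rule out.

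The paper's proof is your first route. It takes the barrier at $\mu=1-1/C$ from Proposition~\ref{prop:morse_bott_all_morse} and then makes a one-line appeal to the maximum principle of Proposition~\ref{prop:max_principle_B3}. You were right that this principle, as computed, controls $\max\mu\circ u$ rather than the minimum, so that appeal does not by itself settle the step either. However, your replacement does not settle it.

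What is missing is an input that excludes an interior minimum of $\mu\circ u$ on $u^{-1}(\{\mu<1\})$. One candidate is an energy lower bound for entering $\{\mu\le\mu_0\}$ that beats the total energy $\mathcal{A}(x)-\mathcal{A}(y_1)-\mathcal{A}(y_2)=O(\epsilon_G\epsilon^2)$. This is not automatic, because the constant in Proposition~\ref{prop:Hein} degenerates as the shell approaches the orbits at $\mu=1$.
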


\begin{remark}
In some sense the above proves Hein's confinement principle for the pair of pants product on Hamiltonian Floer homology. We used a degeneration argument, but we suspect that a direct proof using energy estimates (and carefully-constructed Hamiltonians as above so the curvature term in the symplectic fibration is small) is also possible.
\end{remark}
\subsubsection{Confinement using local energy inequality}

We assume we have fixed all the data from the previous subsection, i.e. $C,\epsilon,\epsilon_G$ and some almost complex structure $J$ whose behavior in the region $\mu\in [1-1/C,1+\epsilon]$ we have already fixed. In particular this $J$ is of confinement type, which is the only property of $J$ will use in this section in conjunction with constraints on $f_1$.

In analogy with the case for the differential, we have a local energy inequality for the product case.

We pick $\mu_0\in [1-\epsilon,1+\epsilon]$ irrational. Suppose $u:B_3 \rightarrow E^{m,n}$ is a $J$-holomorphic section in the moduli space that intersects the $\mu=\mu_0$ hypersurface transversely.

\begin{lemma}
    Let $u$, $J$ be chosen as above, and orient $\Gamma:=u\cap \{\mu=\mu_0\}$ with the boundary orientation of $u\cap \{\mu\leq \mu_0\}$. 
    In the neighborhood we have fixed above, we can write $\theta_{T^*S^n}=\mu \lambda_{S^*S^n}$, where $\lambda_{S^*S^n}$ is the contact form given by restriction of $\theta_{T^*S^n}$ to the cosphere bundle.
    Then we have the following local energy inequality
    \[
    \int_\Gamma \lambda_{S^*S^n}+\epsilon_G(\mu_0-1) dg_{m,n} \ge 0.
    \]
    where equality holds if and only if $\Gamma$ is empty.
\end{lemma}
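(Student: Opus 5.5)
I will follow the proof of Lemma \ref{lem:local_energy_inequality_differential} closely, replacing the cylinder $\mathbb{R}\times S^1$ by the pair of pants $B_3$ and the one-form $d\alpha$ by $dg_{m,n}$. The computation is pointwise along $\Gamma$, so it localizes. Near any point of $\Gamma$, choose local conformal coordinates $(s,t)$ on $B_3$ and write $dg_{m,n}=F\,ds+G\,dt$. Over the region $\mu\in[1-\epsilon,1+\epsilon]\setminus(1-\delta,1+\delta)$ the fibration is the trivial bundle $B_3\times(1-\epsilon,1+\epsilon)\times S^*S^n$. On it the vertical form is $d(\mu\lambda_{S^*S^n}+\tfrac12\epsilon_G(\mu-1)^2dg_{m,n})$, since $-G_1=\tfrac12\epsilon_G(\mu-1)^2$ there. (The transversality assumption and the choice of $\mu_0$ are meant to keep us in this range, or in a range where the same formula holds.)

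\textbf{Step 1: the horizontal lifts.} I would compute these as in the proof of Proposition \ref{prop:max_principle_B3}, with $f_1'(\mu)=\epsilon_G(\mu-1)$:
\[
\partial_s^\#=\partial_s-\epsilon_G(\mu-1)F\,R,\qquad \partial_t^\#=\partial_t-\epsilon_G(\mu-1)G\,R .
\]
By confinement type, $J_0$ preserves the contact structure and sends $\partial_\mu\mapsto R$, and compatibility gives $J\partial_s^\#=\partial_t^\#$. From this I would solve for $J\partial_s$ and $J\partial_t$. I expect, in analogy with the differential case,
\[
J\partial_s=\partial_t+\epsilon_G(\mu-1)(F\,\partial_\mu-G\,R),\qquad J\partial_t=-\partial_s-\epsilon_G(\mu-1)(G\,\partial_\mu+F\,R).
\]
The sign conventions need to be checked carefully against the case $F=0$, $G=k$, which must reproduce the formulas used in Lemma \ref{lem:local_energy_inequality_differential}.

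\textbf{Step 2: the sign condition along $\Gamma$.} Parametrize $\Gamma$ locally as $\tau\mapsto(s(\tau),t(\tau),\mu_0,y(\tau))$ and write $\Gamma'=aR+v+b\partial_s+c\partial_t$, with $v$ in the contact distribution. Since $u$ is $J$-holomorphic, $J\Gamma'$ is tangent to $u$. Because $\Gamma$ carries the boundary orientation of $u\cap\{\mu\le\mu_0\}$, the vector $J\Gamma'$ points out of that region, i.e. $d\mu(J\Gamma')>0$ (or $<0$ with the convention of the earlier lemma; I will fix the sign by matching it). The $\partial_\mu$-component of $J\Gamma'$ is
\[
-a+\epsilon_G(\mu_0-1)(bF-cG)\cdot(\pm1).
\]
Since $a=\lambda_{S^*S^n}(\Gamma')$ and $bF+cG$ (with the appropriate signs) equals $dg_{m,n}(\Gamma')$, the condition becomes
\[
\lambda_{S^*S^n}(\Gamma')+\epsilon_G(\mu_0-1)\,dg_{m,n}(\Gamma')>0
\]
pointwise wherever $\Gamma$ is nonempty. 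The pointwise inequality is strict because transversality of $u$ to $\{\mu=\mu_0\}$ makes $d\mu(J\Gamma')\neq0$.

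\textbf{Step 3: integrate.} Integrating over $\Gamma$ gives the inequality, with equality only when $\Gamma=\emptyset$. Note that $\Gamma$ is compact: the asymptotic orbits do not lie on $\mu=\mu_0$, since we chose $\mu_0$ irrational and away from the orbit loci.

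\textbf{Main obstacle.} The hard part is the bookkeeping in Steps 1 and 2. The combination of $ds$ and $dt$ components must assemble exactly into $dg_{m,n}(\Gamma')$ rather than something like $\star dg_{m,n}$. This hinges on the precise form of $J\partial_s,J\partial_t$ and on $dg_{m,n}$ being closed, which was also what made $\mu$ harmonic in Proposition \ref{prop:max_principle_B3}. I would verify it by redoing that harmonicity computation at $\mu=\mu_0$ and reading off the $\partial_\mu$-component directly from the Cauchy–Riemann equations $\partial_t\mu=-\lambda(\partial_sy)-f_1'F$ and $\partial_s\mu=\lambda(\partial_ty)+f_1'G$. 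Contracting these with the tangent and normal directions of $\Gamma$ should give the integrand directly.
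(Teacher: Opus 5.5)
Your outline is the paper's proof: compute the horizontal lifts, solve for $J\partial_s$ and $J\partial_t$, read off the $\partial_\mu$-component of $J\Gamma'$, fix its sign from the boundary orientation, and integrate. As written, however, Steps 1--2 do not establish the inequality. The two signs you leave open are exactly the content of the lemma, and the formulas you commit to are wrong.

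Solving $J\partial_s^\#=\partial_t^\#$ with $J\partial_\mu=R$, i.e.\ $JR=-\partial_\mu$, gives
\[
J\partial_s=\partial_t-\epsilon_G(\mu_0-1)(G\,R+F\,\partial_\mu),\qquad J\partial_t=-\partial_s+\epsilon_G(\mu_0-1)(F\,R-G\,\partial_\mu).
\]
Your versions have the opposite sign on both $F$-terms: substituting your $J\partial_s$ back into $J\partial_s^\#$ leaves a stray $2\epsilon_G(\mu_0-1)F\,\partial_\mu$. Your proposed check against $F=0$, $G=k$ is blind to exactly these terms.

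The error is not cosmetic. Your $\partial_\mu$-component $-a+\epsilon_G(\mu_0-1)(bF-cG)$ involves $(F\,ds-G\,dt)(\Gamma')$, which is neither $\pm dg_{m,n}(\Gamma')$ nor $\pm\star dg_{m,n}(\Gamma')$. So no choice of your ``$\pm1$'' produces the integrand. With the correct formulas, the $\partial_\mu$-component is exactly $-a-\epsilon_G(\mu_0-1)(bF+cG)=-\lambda_{S^*S^n}(\Gamma')-\epsilon_G(\mu_0-1)\,dg_{m,n}(\Gamma')$.

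The orientation sign is also forced, not a convention to be matched to the desired conclusion. Let $\nu$ be the outward normal, so $(\nu,\Gamma')$ is positive. Positivity of $(\Gamma',J\Gamma')$ then forces $J\Gamma'$ to have negative $\nu$-component. Hence $J\Gamma'$ points \emph{into} $\{\mu\le\mu_0\}$ and $d\mu(J\Gamma')<0$; your first guess ($>0$) would reverse the inequality. With both corrections you get the strict pointwise inequality, and the argument is then the paper's.

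The Cauchy--Riemann check you mention at the end is the cleanest way to close this, and it avoids computing $J\partial_s$ and $J\partial_t$ altogether. Take a domain vector $w=b\,\partial_s+c\,\partial_t$ tangent to $\{\mu\circ u=\mu_0\}$, and set $f_1'=\epsilon_G(\mu_0-1)$. The derivation in Proposition \ref{prop:max_principle_B3} goes through verbatim for a confinement-type $J$, giving $\lambda_{S^*S^n}(\partial_s y)=-\partial_t\mu-f_1'F$ and $\lambda_{S^*S^n}(\partial_t y)=\partial_s\mu-f_1'G$. Substituting these yields
\[
\lambda_{S^*S^n}(du(w))+\epsilon_G(\mu_0-1)\,dg_{m,n}(w)=c\,\partial_s\mu-b\,\partial_t\mu .
\]
The right-hand side is the determinant of $(\nabla\mu,w)$, which is strictly positive for the boundary orientation of the sublevel set by transversality.
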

\begin{proof}
The proof is similar to the proof of Lemma \ref{lem:local_energy_inequality_differential}. Choose local conformal coordinates $(s, t)$ on $B_3$, and parametrize $\Gamma$ as $(s(\tau), t(\tau), \mu_0, y(\tau))\in B_3 \times \{\mu_0\}\times S^*S^n$. Write $\Gamma'(\tau) = aR+v+b\partial_s+c\partial_t$ where $R$ is the Reeb vector of $(S^*S^n,\lambda_{S^*S^n})$, and $v\in TY$ is in the contact distribution of $(S^*S^n,\lambda_{S^*S^n})$. Assume that $dg_{m,n} = Fds +Gdt$ in the above local coordinates. By our assumptions on $J$, we have the following computations for the horizontal lifts $\partial_s^\#$ and $\partial_t^\#$:
\[
\partial_s^\# = \partial_s - \epsilon_G(\mu_0-1)FR, \quad \; \; \partial_t^\# = \partial_t - \epsilon_G(\mu_0-1)GR.
\]
So we have \[J(\partial_s - \epsilon_G(\mu_0-1)FR) = \partial_t - \epsilon_G(\mu_0-1)GR,\]
and hence
\[
J(\partial_s) = \partial_t-\epsilon_G(\mu_0-1)(GR+F\partial_\mu), \quad J(\partial_t) = -\partial_s + \epsilon_G(\mu_0-1)(FR-G\partial_\mu).
\]

On the other hand, we have
\[
J(\Gamma'(\tau)) = J(aR+v+b\partial_s+c\partial_t) = -a\partial_\mu + J(v) + bJ(\partial_s)+cJ(\partial_t).
\]

Our assumption on the orientation implies that the $\partial_\mu$-coordinate of $J(\Gamma'(\tau))$ is negative, unless $\Gamma$ is empty. Hence we have:
\[
-a-\epsilon_G(\mu_0-1)(bF+cG) < 0.
\]
Finally, notice that $a = \lambda_{S^*S^n}(\Gamma'(\tau))$, and $bF+cG = dg_{m,n}(\Gamma'(\tau))$. The conclusion then follows by integrating the above inequality over $\Gamma$.
\end{proof}

From this, we can deduce the following confinement results. To start, we recall some terminology. The Morse region and the Dehn twist region of $E^{m,n}$ intersect in a neighborhood of the $\mu=1$ region which is diffeomorphic to
\[
B_3 \times (1-\epsilon,1+\epsilon)_\mu \times S^*S^n.
\]

Recall our convention that the periodic orbits in the Morse region include those in this intersection region, and likewise for periodic orbits in the Dehn twist region. The periodic orbits in the Morse region that do not intersect the above intersection region are said to be in the strict Morse region. Likewise, periodic orbits in the Dehn twist region that are not in the above region are said to be in the strict Dehn twist region.

We have also fixed the Morse function in the Morse region. The Morse region looks like
\[
B_3 \times M|_{\mu\geq 1-\epsilon}
\]
with $d\lambda^{m,n} = d\theta_M +df_1dg_{m,n}$. 
We required $f_1 = \frac{1}{2}\epsilon_G(\mu-1)^2$ on $[1-\epsilon,1+\epsilon]$, and on the rest of $M$ we required it to be a $C^2$ small Morse function whose value is between $\frac{1}{2}\epsilon_G \epsilon^2$ and $\frac{3}{2}\epsilon_G \epsilon^2$.

\begin{lemma} \label{lem:confine_product_all_twist}
Suppose $x,y_1,y_2$ are all in the Dehn twist region: then any $C\in \mathcal{M}(x;y_1,y_2)$ is also contained entirely in the Dehn twist region.
\end{lemma}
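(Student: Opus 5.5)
I will mimic the proof of Lemma \ref{lem:confinement_differential_twist_region}, using the product version of the local energy inequality just established. Suppose $u\in\mathcal{M}(x;y_1,y_2)$ with all three asymptotic orbits in the Dehn twist region, so $\mu\le 1+\epsilon'$ along the orbits for some $\epsilon'$ slightly less than $\epsilon$. I argue by contradiction: assume $u$ enters the region $\mu>\mu_0$ for some $\mu_0\in(1+\delta,1+\epsilon)$, chosen (by Sard's theorem) so that $u$ meets $\{\mu=\mu_0\}$ transversely and all three orbits lie in $\{\mu<\mu_0\}$. Set $\Gamma=u\cap\{\mu=\mu_0\}$, oriented as the boundary of $u\cap\{\mu\le\mu_0\}$. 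Then $-\Gamma$ is the full boundary of the compact surface $S:=u\cap\{\mu\ge\mu_0\}$; compactness holds because no puncture is asymptotic into $\{\mu\geq \mu_0\}$, and $S$ avoids $\partial M$ by Proposition \ref{prop:max_principle_B3}.

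\textbf{Step 1 (the $dg_{m,n}$ term vanishes).} Since $u$ is a section, $\pi\circ u$ identifies $S$ with a compact subsurface $\Sigma\subset B_3$ with boundary $\pi(\Gamma)$. The form $dg_{m,n}$ is closed on $B_3$, and $\Sigma$ contains no punctures, so Stokes gives $\int_\Gamma dg_{m,n}=\pm\int_\Sigma d(dg_{m,n})=0$. This is the analogue of $\int_\Gamma d\alpha=0$ in the differential case. Note that in the Morse region $dg_{m,n}$ is exact on compact pieces avoiding punctures only in the sense of being closed; closedness is all Stokes needs here.

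\textbf{Step 2 (positivity of $\lambda$-integral).} The local energy inequality then reduces to $\int_\Gamma\lambda_{S^*S^n}>0$, strict because $\Gamma\neq\emptyset$.

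\textbf{Step 3 (energy contradiction).} Because the curvature vanishes (Proposition on $E^{m,n}$), the vertical energy density is $u^*d\lambda^{m,n}\ge0$ pointwise, so $0\le\int_S u^*d\lambda^{m,n}=-\int_\Gamma\lambda^{m,n}$. On $\{\mu=\mu_0\}$ we have $\lambda^{m,n}=\mu_0\lambda_{S^*S^n}+f_1(\mu_0)\,dg_{m,n}$ with $f_1(\mu_0)$ a constant. By Step 1 this equals $-\mu_0\int_\Gamma\lambda_{S^*S^n}<0$ by Step 2, which is a contradiction. Hence $u$ never reaches $\mu=\mu_0$, and letting $\mu_0$ range over $(1+\delta,1+\epsilon)$ shows $u$ lies in the Dehn twist region. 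No constraint on the Morse function beyond its form near $\mu=1$ is needed.

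\textbf{Main obstacle.} The delicate point is Step 3's positivity claim: pointwise nonnegativity of $u^*d\lambda^{m,n}$ uses compatibility of $J$ and zero curvature on the Morse-Bott bundle $E^{m,n}$, which is why this lemma is stated there rather than on $E^{m,n}_\delta$. I would also double-check that transversality and a nonempty intersection are achievable with $\mu_0$ outside the perturbation window $(1-\delta,1+\delta)$, which is needed so that $J$ has the confinement form used in the local energy inequality. A point on the boundary $\mu=1+\epsilon$ reached non-transversally is excluded by the open-interval choice of $\mu_0$.
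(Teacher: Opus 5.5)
Your proposal is correct and is essentially the paper's proof. Like the paper, you cut $u$ along $\Gamma=u\cap\{\mu=\mu_0\}$ just inside $\mu=1+\epsilon$, use $\int_\Gamma dg_{m,n}=0$ together with the local energy inequality to get $\int_\Gamma\lambda_{S^*S^n}>0$, and conclude that the zero-curvature (hence nonnegative) vertical energy of the outer piece equals $-\mu_0\int_\Gamma\lambda_{S^*S^n}<0$, a contradiction. The only differences are details the paper leaves implicit: it takes $\mu_0=1+\epsilon$ with $\epsilon$ adjusted for transversality, whereas you choose a generic $\mu_0\in(1+\delta,1+\epsilon)$ via Sard and justify $\int_\Gamma dg_{m,n}=0$ explicitly by Stokes.
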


\begin{proof}
Assume not: then we consider the nonempty intersection $\Gamma:= C\cap\{\mu=1+\epsilon\}$, so that 
\[
\int_\Gamma dg_{m,n}=0
\]
Then we have
\[
\int_{C\cap \{\mu\geq 1+\epsilon\}} d\lambda^{m,n} = -(1+\epsilon)\int_\Gamma \lambda_{S^*S^n} <0,
\]
by the local energy inequality, which is a contradiction.
\end{proof}

\begin{lemma}
If $x$ is in the Dehn twist region, and if at least one of $y_1$ or $y_2$ is in the strict Morse region, then $\mathcal{M}(x;y_1,y_2)$ is empty.
\end{lemma}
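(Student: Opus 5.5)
I would argue by contradiction, exactly as in the analogous lemma for the differential. I would use the product version of the local energy inequality together with the vanishing curvature of $E^{m,n}$, so that vertical energy equals action differences.

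Suppose $u\in\mathcal{M}(x;y_1,y_2)$ exists, with $x$ in the Dehn twist region and, say, $y_1$ in the strict Morse region. Since $y_1$ lies in $\mu>1+\epsilon$ while $x$ lies in $\mu\le 1+\epsilon$, the curve must cross the hypersurface $\mu=\mu_0$ for some $\mu_0$ slightly larger than $1+\epsilon$ but still inside the chart where $f_1=\tfrac12\epsilon_G(\mu-1)^2$. (Alternatively, take $\mu_0=1+\epsilon$ after enlarging the chart slightly.) I would pick such a $\mu_0$ with the intersection transverse. Set $\Gamma=u\cap\{\mu=\mu_0\}$, oriented as the boundary of $u\cap\{\mu\le\mu_0\}$. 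Then $\partial(u\cap\{\mu\ge\mu_0\})=-\Gamma$ together with the asymptotic ends lying in $\{\mu\ge\mu_0\}$: $y_1$ (a negative end) certainly, possibly $y_2$, and not $x$.

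First I compute the winding $\int_\Gamma dg_{m,n}$ by Stokes on the part of $B_3$ mapping to $\{\mu\le\mu_0\}$, using that $dg_{m,n}$ is closed with periods $m,n,m+n$ at the punctures. If only $y_1$ is outside, then $\int_\Gamma dg_{m,n}=m$ up to sign conventions. If both $y_i$ are outside, it is $m+n$. Either way it is a positive integer $w\ge1$, with sign forced the same way as the case ``$\int_\Gamma d\alpha=1$'' in the differential proof. The local energy inequality then gives
\[\int_\Gamma\lambda_{S^*S^n}\ \ge\ -\epsilon_G(\mu_0-1)\,\textstyle\int_\Gamma dg_{m,n}.\]

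Next, the vertical energy of $u$ restricted to $\{\mu\ge\mu_0\}$ is nonnegative pointwise (vanishing curvature, so $E=\int u^*d\lambda^{m,n}$ locally). By Stokes,
\[0\le\int_{u\cap\{\mu\ge\mu_0\}}d\lambda^{m,n}=-\mu_0\int_\Gamma\lambda_{S^*S^n}-\int_\Gamma f_1\,dg_{m,n}\pm\sum_{\text{ends outside}}\mathcal{A}(y_i).\]
On $\Gamma$ the function $f_1$ is the constant $\tfrac12\epsilon_G(\mu_0-1)^2$. The negative ends contribute $-\mathcal A(y_i)$ with sign matching the differential case, where the input end contributed $+kf_1(x)$. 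Each such $\mathcal A(y_i)$ is $m f_1(y_i)$ or $n f_1(y_i)$, which is at most $\tfrac32\epsilon_G\epsilon^2$ times the multiplicity. Combining the two estimates should give
\[\epsilon_G(\mu_0-1)\mu_0 w-\tfrac12\epsilon_G(\mu_0-1)^2w\ \lesssim\ \tfrac32 w\,\epsilon_G\epsilon^2,\]
that is, $\epsilon\lesssim\tfrac32\epsilon^2$ after dividing, which is false for small $\epsilon$. If $y_2$ is also in the Dehn twist region, the same computation goes through with only $y_1$'s multiplicity.

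The main obstacle is bookkeeping the signs and orientations. I need that, for negative punctures exiting the region, the winding $\int_\Gamma dg_{m,n}$ has the sign that makes the $\lambda_{S^*S^n}$ term strictly penalizing. I also need that the action of the outer ends enters with the sign that can only help (bounded by $O(\epsilon_G\epsilon^2)$). I would check this against the second case of the differential lemma, where the input lies in the Morse region, since the negative ends of the pair of pants play the role of that input. If it is cleaner, I would instead place $\Gamma$ at $\mu=1-\epsilon$ and mirror that computation verbatim.
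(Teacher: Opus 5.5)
Your strategy is the paper's: slice at $\mu_0=1+\epsilon$, apply the product local energy inequality, and use nonnegativity of the vertical energy of $u\cap\{\mu\ge\mu_0\}$, which holds because the curvature vanishes. But the sign you assert for the winding is wrong, and that sign is exactly what the argument turns on.

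The domain region $u^{-1}(\{\mu\le\mu_0\})$ contains the \emph{positive} puncture. Its end circle contributes $+(m+n)$ to $\int dg_{m,n}$, and any negative end lying in the twist region contributes $-m$ or $-n$. Stokes therefore gives $\int_\Gamma dg_{m,n}=-w$, where $w\in\{m,n,m+n\}$ is the total multiplicity of the negative ends in the strict Morse region. These are the paper's values $-(m+n)$ and $-n$.

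The correct model in the differential lemma is its first case ($\int_\Gamma d\alpha=-1$, Morse orbit at the negative end). It is not the second case you cite, where the Morse-region orbit is the positive end; a negative end of the pair of pants cannot play that role. With your sign $+w$, the local energy inequality only gives $\int_\Gamma\lambda_{S^*S^n}\ge-\epsilon_G(\mu_0-1)w$. That bound is vacuous, and the energy estimate then yields no contradiction. The inequality you finally display can only be obtained using $-w$.

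Your fallback of slicing at $\mu=1-\epsilon$ also fails. There $\mu_0-1<0$, so with the correct negative winding the bound is again vacuous. Moreover, $x$ may lie on the $\mu=1$ locus, outside $\{\mu\le 1-\epsilon\}$.

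With the sign corrected, the proof closes exactly as in the paper. The local energy inequality gives $\int_\Gamma\lambda_{S^*S^n}>\epsilon_G\epsilon w$. Since the outer negative end circles carry the orientation $-\partial_t$,
\[
0\le -(1+\epsilon)\int_\Gamma\lambda_{S^*S^n}+\frac{1}{2}\epsilon_G\epsilon^2 w-\sum_{\mathrm{outer}}\mathcal{A}(y_i)<-\epsilon_G\Bigl(\epsilon+\frac{1}{2}\epsilon^2\Bigr)w-\sum_{\mathrm{outer}}\mathcal{A}(y_i).
\]
This contradicts $\mathcal{A}(y_1)=mf_1(y_1)>0$ and $\mathcal{A}(y_2)=nf_1(y_2)>0$ for the outer ends. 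So the contradiction comes from positivity of $f_1$ on the strict Morse region, not from comparing $\epsilon$ with $\frac{3}{2}\epsilon^2$. Your treatment of the ends as $+\mathcal{A}(y_i)\le\frac{3}{2}w\epsilon_G\epsilon^2$ is a harmless weakening once the winding sign is right.

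One smaller point: the chart where $J$ has confinement type and $f_1=\frac{1}{2}\epsilon_G(\mu-1)^2$ is $\mu\in(1-\epsilon,1+\epsilon)$. So take a generic $\mu_0$ just below $1+\epsilon$, not above it.
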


\begin{proof}
We separate the proof into two cases.

Suppose $x$ is in the Dehn twist region, and both $y_1$ and $y_2$ are in the strict Morse region. If such a $J$-holomorphic section $C$ exists, we consider the transverse intersection $\Gamma = C\cap \{x=1+\epsilon\}$ for some small positive $\epsilon$. Clearly we have $\int_\Gamma dg_{m,n} = -(m+n)$. The local energy inequality then implies that
\[
\int_\Gamma \lambda_{S^*S^n}\geq \epsilon_G\epsilon (m+n).
\]
On the other hand, we must have $0\leq E(C\cap\{\mu\geq 1+\epsilon\})$, which is to say
\[
0\leq -\int_\Gamma \mu \lambda_{S^*S^n} - \int_\Gamma f_1dg_{m,n} - m f_1(y_1) - n f_1(y_2).
\]
So we have $m f_1(y_1) + n f_1(y_2)\leq (m+n)(-\epsilon-\frac{1}{2}\epsilon^2)\epsilon_G$, which is a contradiction if we choose $f_1$ to be a sufficiently small perturbation of $\frac{1}{2}\epsilon^2\epsilon_G$ in the Morse region.

Next suppose $x$ is in the Dehn twist region, and the two negative ends fall in different regions. Without loss of generality, let us assume that $y_1$ is in the Dehn twist region and $y_2$ is in the strict Morse region. Again, consider the transverse intersection $\Gamma = C\cap \{\mu=1+\epsilon\}$ for some small positive $\epsilon$. Clearly we have $\int_\Gamma dg_{m,n} = -n$. The local energy inequality then implies that
\[
\int_\Gamma \lambda_{S^*S^n}\geq \epsilon \epsilon_G n.
\]
On the other hand, we must have $0\leq E(C\cap\{x\geq 1+\epsilon\})$, which is to say
\[
0\leq -\int_\Gamma \mu\lambda_{S^*S^n} - \int_\Gamma f_1dg_{m,n} - n f_1(y_2).
\]
So we have $f_1(y_2)\leq -\epsilon_G(\epsilon+\frac{1}{2}\epsilon^2)$, which is a contradiction if we choose $f_1$ to be a sufficiently small perturbation of $\frac{1}{2}\epsilon_G\epsilon^2$ in the Morse region.
\end{proof}

\begin{lemma}
If $x$ is in the Morse region, and at least one of the two negative ends is strictly in the twist region, then $\mathcal{M}(x;y_1,y_2)$ is empty.
\end{lemma}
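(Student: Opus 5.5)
We argue by contradiction, mirroring the second half of the proof of the analogous lemma for the differential. The cut is now taken at $\mu=1-\epsilon$, on the Dehn twist side of the intersection region. Suppose $C\in\mathcal{M}(x;y_1,y_2)$ exists, with $x$ in the Morse region and (say) $y_1$ strictly in the Dehn twist region. Perturbing $\epsilon$ slightly if needed, we may assume that $\Gamma:=C\cap\{\mu=1-\epsilon\}$ is transverse. We orient $\Gamma$ as the boundary of $C\cap\{\mu\le 1-\epsilon\}$. Then $-\Gamma$ is the boundary of $C\cap\{\mu\geq 1-\epsilon\}$, together with the asymptotic ends lying in $\{\mu\geq 1-\epsilon\}$.

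The first step is the homological bookkeeping, done via Stokes for the closed form $dg_{m,n}$ on the part $C\cap\{\mu\ge 1-\epsilon\}$. Its ends are $x$ (contributing $m+n$) and possibly $y_2$ (contributing $-n$ if $y_2$ is in the Morse region, and $0$ otherwise). This gives $\int_\Gamma dg_{m,n}=m+n$ if $y_2$ lies in the strict Dehn twist region, and $\int_\Gamma dg_{m,n}=m$ if $y_2$ lies in the Morse region; in either case it is $\geq m>0$. Since $\mu_0-1=-\epsilon$, the local energy inequality then yields
\[
\int_\Gamma \lambda_{S^*S^n}\ \ge\ \epsilon_G\epsilon\int_\Gamma dg_{m,n}.
\]

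The second step applies nonnegativity of vertical energy on $C\cap\{\mu\ge 1-\epsilon\}$. Vanishing curvature makes the energy equal $\int u^*d\lambda^{m,n}$, and Stokes gives
\[
0\le -\int_\Gamma \mu\lambda_{S^*S^n}-\int_\Gamma f_1\,dg_{m,n}+(m+n)f_1(x)-[\,n f_1(y_2)\,],
\]
where the bracketed term is present only when $y_2$ is in the Morse region. On $\Gamma$ we have $\mu=1-\epsilon$ and $f_1=\tfrac12\epsilon_G\epsilon^2$. We also have $f_1(x)\le \tfrac32\epsilon_G\epsilon^2$, and $f_1(x)=0$ if $x$ lies on the $\mu=1$ locus. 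Finally $f_1(y_2)\ge0$. Substituting the bound from the first step, the right-hand side is at most
\[
-(1-\epsilon)\epsilon_G\epsilon\, N-\tfrac12\epsilon_G\epsilon^2 N+(m+n)\tfrac32\epsilon_G\epsilon^2,
\]
with $N=\int_\Gamma dg_{m,n}\in\{m,m+n\}$. This is $-\epsilon_G\epsilon N+O(\epsilon_G\epsilon^2)$, which is negative for $\epsilon$ small relative to $m,n$, giving the contradiction.

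The main obstacle is making the orientation and sign conventions match, and handling the case where $y_2$ is also on the Morse side: the coefficient $N$ drops to $m$, but the $(m+n)f_1(x)$ term keeps its full weight. We need the linear term $\epsilon_G\epsilon m$ to beat the quadratic $\tfrac32(m+n)\epsilon_G\epsilon^2$, which holds since $\epsilon\ll 1/(m+n)$ is permitted. We also check that the Morse-Bott perturbation region $(1-\delta,1+\delta)$ does not meet $\mu=1-\epsilon$, since $\delta\ll\epsilon$. This is what allows the local energy inequality and the confinement-type form of $J$ to be used there.
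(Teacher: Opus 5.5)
Your proof is correct and follows the paper's argument. You cut along $\mu=1-\epsilon$, use Stokes for $dg_{m,n}$ to get $\int_\Gamma dg_{m,n}\in\{m,m+n\}$, apply the local energy inequality, and contradict nonnegativity of the vertical energy of $C\cap\{\mu\ge 1-\epsilon\}$ using the size constraints on $f_1$. The differences are cosmetic: you treat the paper's two cases at once via $N$, and you replace its appeal to $f_1$ being ``a sufficiently small perturbation of $\tfrac12\epsilon_G\epsilon^2$'' with the explicit bounds $0\le f_1\le\tfrac32\epsilon_G\epsilon^2$ and $\epsilon$ small relative to $m/(m+n)$.
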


\begin{proof}
For the first case, assume both $y_1, y_2$ are in the strict Dehn twist region.
Again, if such a $J$-holomorphic section exists, consider the nonempty transverse intersection $\Gamma = C\cap \{\mu=1-\epsilon\}$ for some small positive $\epsilon$. Clearly we have $\int_\Gamma dg_{m,n} = m+n$. The local energy inequality then implies that
\[
\int_\Gamma \lambda_{S^*S^n}\geq \epsilon \epsilon_G\int_\Gamma dg_{m,n} = (m+n)\epsilon \epsilon_G.
\]
On the other hand, we must have $0\leq E(C\cap\{\mu\geq 1-\epsilon\})$, which is to say
\[
0\leq -\int_\Gamma \mu\lambda_{S^*S^n} - \int_\Gamma f_1dg_{m,n} + (m+n) f_1(x).
\]
So we have $f_1(x)\geq \epsilon_G(\epsilon-\frac{1}{2}\epsilon^2)$, which is a contradiction if we choose $f_1$ to be a sufficiently small perturbation of $\frac{1}{2}\epsilon_G\epsilon^2$ in the Morse region.

Finally, suppose that $x$ is in the Morse region and the two negative ends fall in different regions. Without loss of generality, let us assume that $y_1$ is in the strictly twist region and $y_2$ is in the Morse region. Again, consider the transverse intersection $\Gamma = C\cap \{\mu=1-\epsilon\}$ for some small positive $\epsilon$. Clearly we have $\int_\Gamma dg_{m,n} = m$. The local energy inequality then implies that
\[
\int_\Gamma \lambda_{S^*S^n}\geq \epsilon \epsilon_G m.
\]
On the other hand, we must have $0\leq E(C\cap\{x\geq 1-\epsilon\})$, which is to say
\[
0\leq -\int_\Gamma \mu\lambda_{S^*S^n} - \int_\Gamma f_1dg_{m,n} + (m+n)f_1(x)-nf_1(y_2).
\]
So we have $(m+n)f_1(x)-nf_1(y_2)\geq m\epsilon_G(\epsilon-\frac{1}{2}\epsilon^2)$, which is a contradiction if we choose $f_1$ to be a sufficiently small perturbation of $\frac{1}{2}\epsilon_G\epsilon^2$ in the Morse region.
\end{proof}

With the above we have constructed almost complex structures on $E^{m,n}$ that produce confinement of $J$-holomorphic curves satisfying the listed conditions in Theorem \ref{thm:product_confinement} in the purely Morse-Bott setting. The main conditions we require on $J$ are stated in the (proof) of Proposition \ref{prop:morse_bott_all_morse}.

\subsection{Confinement for the nondegenerate setting}
Now that we have thoroughly understood the confinement of $J$-holomorphic curves in the Morse-Bott symplectic fibration $E^{m,n}\rightarrow B_3$, we use a degeneration argument to deduce the same result for sufficiently small perturbations $E^{m,n}_\delta \rightarrow B_3$. Take any almost complex structure $J$ constructed as in Section \ref{sec:confinement_Morse_Bott} for which confinement of $J$-holomorphic curves is achieved. This involves having chosen parameters $\epsilon,\epsilon_G,C$. We note near the punctures of $B_3$ the almost complex structure $J$ can be identified with almost complex structures on bundles $\mathbb{R}\times \mathcal{E}^{m}, \mathbb{R}\times \mathcal{E}^{n}, \mathbb{R}\times \mathcal{E}^{m+n}$ of confinement type. Tacitly we assume our $J$ and parameters $C,\epsilon,\epsilon_G$ are chosen so that confinement of $J$-holomorphic curves as stated in Theorem \ref{thm:confinement_differential} on these bundles also holds. Technically speaking Theorem \ref{thm:confinement_differential} is stated for the nondegenerate case, but here everything is Morse-Bott, but the same theorem continues to hold (we can take $\delta=0$ in all of the proofs).

Then let $J_\delta$ be a size-$\delta$ perturbation of $J$ which is a compatible almost complex structure on $E_{\delta}^{m,n}$.

\begin{proposition}\label{prop:confinement_compatible_Morse}
For $\delta$ sufficiently small, let $u\in \mathcal{M}^{J_{\delta}}(x;y_1,y_2)$ be a $J_\delta$-holomorphic section of $E_\delta^{m,n}$. Then $u$ satisfies the confinement conditions stated in Theorem \ref{thm:product_confinement}.
\end{proposition}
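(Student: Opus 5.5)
We argue by contradiction with a Morse--Bott degeneration (cascade) limit. Suppose that for a sequence $\delta_k\to 0$ there are $J_{\delta_k}$-holomorphic sections $u_k\in\mathcal{M}^{J_{\delta_k}}(x;y_1,y_2)$ of $E^{m,n}_{\delta_k}$ violating one of the three confinement conditions of Theorem \ref{thm:product_confinement}. Under the Morse--Bott correspondence, each nondegenerate orbit $x,y_1,y_2$ is a critical point of the auxiliary Morse function on a Morse--Bott family, and the region it lies in (strict Dehn twist, strict Morse, or intersection) is determined by that family. Because the perturbation is supported in $\mu\in[0,\delta]\cup[\mu_\ell-\delta,\mu_\ell+\delta]$ and near $\mu=1$, this labelling does not depend on $k$. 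The action of each orbit differs from the Morse--Bott action of its family by $O(\delta_k)$. The curvature of $E^{m,n}_{\delta_k}$ is supported in the interpolation collars $[C',C'+1]$ and has size $O(\delta_k)$. Together these give a uniform Hofer and vertical energy bound on the $u_k$.

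We then apply SFT compactness for Morse--Bott degenerations (Bourgeois' cascade compactness). Exactness and the maximum principle of Proposition \ref{prop:max_principle_B3} rule out sphere bubbling and escape through $\partial M$. A subsequence of the $u_k$ therefore converges to a cascade. This cascade consists of one $J$-holomorphic section $u_0$ of the Morse--Bott fibration $E^{m,n}$, together with $J$-holomorphic cylinders in $\mathbb{R}\times\mathcal{E}^{m}$, $\mathbb{R}\times\mathcal{E}^{n}$ and $\mathbb{R}\times\mathcal{E}^{m+n}$ at the ends (of Morse--Bott, i.e.\ $\delta=0$, type). Consecutive levels are joined by gradient flow segments of the auxiliary Morse functions inside the Morse--Bott families. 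The key point is that every Morse--Bott family lies entirely within one region: the families at $\mu=1$ lie in the intersection region, and the other families are strict. Consequently the gradient flow segments never change region.

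Next we propagate confinement through the cascade, starting from the ends. At the ends we use Theorem \ref{thm:confinement_differential} at $\delta=0$, which holds for our choice of $J$ and parameters as noted just before the proposition. Take a cylinder level whose outer end lies in a strict region. By part (3) of that theorem, the cylinder stays in that region, so its inner end also lies in that region (strictly, or in the intersection region). By part (1) or (2), a cylinder with both ends in the intersection region stays in both regions, hence in the intersection region. We then apply the Morse--Bott product confinement of \S\ref{sec:confinement_Morse_Bott} to $u_0$: Proposition \ref{prop:morse_bott_all_morse} and its corollary, Lemma \ref{lem:confine_product_all_twist}, and the two emptiness lemmas. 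These show that $u_0$ obeys the same three conditions with respect to the asymptotic families of $u_0$, and we feed the result back outward. This forces the entire limit cascade into the region predicted by the labels of $x,y_1,y_2$. In case (3), where all ends lie in the intersection region, both the Morse and the twist conclusions apply, so the limit lies in the intersection region.

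Finally we contradict the assumption on $u_k$. Since $u_k$ violates confinement, it meets a hypersurface $\{\mu=\mu_*\}$ that sits at a definite distance outside the predicted region; for instance $\mu_*=1-1/C$, or $\mu_*=1\pm\epsilon'$ with $\epsilon'$ strictly between $\delta$ and $\epsilon$. Convergence in $C^\infty_{loc}$ on the levels, together with uniform convergence of the long thin necks to gradient segments or trivial cylinders, implies that the cascade limit also meets this hypersurface. This contradicts the previous paragraph.

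The main obstacle is this last step: controlling the necks. Portions of $u_k$ that stretch out near a Morse--Bott family could in principle make a macroscopic excursion in $\mu$ that disappears in the limit. To exclude this we would use the local energy inequality (in its form for $\mathbb{R}\times\mathcal{E}^*$) on the neck regions, where the curves are sections of the symplectization with small energy. A neck that crosses $\{\mu=1\pm\epsilon'\}$ must carry energy $\gtrsim\epsilon_G\epsilon'$, or, using Hein's lemma across $[1-1/C,1-1/C^2]$, energy at least $\epsilon_C$. In either case the bound is independent of $k$, whereas the energy in the necks tends to $0$.
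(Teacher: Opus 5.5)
Your overall strategy (contradiction, Morse--Bott degeneration to a cascade, per-level confinement) matches the paper's. However, the step where you ``propagate confinement through the cascade'' has a genuine gap. Knowing that each level satisfies the confinement statements does not confine the cascade as a whole, because a chain of levels can pass through the $\mu=1$ locus from one strict region into the other.

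Concretely, take $x$ in the strict Morse region. The top cylinder from $x$ stays in the Morse region but may end at an orbit $z$ on the $\mu=1$ locus. The next cylinder then has its positive end at $z$ and could have its negative end at an orbit $w$ in the strict Dehn twist region. Parts (2) and (3) of Theorem \ref{thm:confinement_differential} permit such a cylinder, since they only say it lies in the twist region. The rest of the cascade can then live in the twist region, with every component individually confined. Your sentence ``a cylinder with both ends in the intersection region stays in both regions'' assumes that the inner end of such a level is again on $\mu=1$, and that is exactly what needs proof. This configuration is precisely the only failure mode the paper isolates: an upper level in one region has a negative end on $\mu=1$, and it feeds a lower level whose positive end is on $\mu=1$ and which leaves that region.

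The missing idea is an action argument.
\begin{itemize}
\item In the Morse--Bott model, the orbits on the $\mu=1$ locus have action $0$, and every other orbit has strictly positive action.
\item Since the curvature vanishes, each level satisfies $E=\mathcal{A}(\text{positive end})-\sum\mathcal{A}(\text{negative ends})\ge 0$.
\item Hence any level whose unique positive puncture lies on $\mu=1$ has all its negative punctures on $\mu=1$ and zero energy. It is therefore horizontal and stays in the intersection region.
\item Because every level has exactly one positive puncture, no chain can cross from one strict region to the other through $\mu=1$.
\end{itemize}
For the main level $u_0$ you could get this from the product emptiness lemmas instead, since their hypotheses allow the positive end on $\mu=1$. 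For the cylinder levels, Theorem \ref{thm:confinement_differential} as stated does not suffice, and you need the energy identity.

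The neck control you flag as the main obstacle is not where the paper locates the difficulty. The paper treats it as part of the convergence of $u_i$ to the cascade; the real content is the action-minimality of the $\mu=1$ orbits.
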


In preparation, we need an estimate of the energy of periodic orbits.

\begin{proposition}
Consider $\mathcal{E}^m$ with vertical $1$-form $\alpha^m$ defined as above. Then the periodic orbits at the $\mu=1$ locus have the lowest action of all its periodic orbits.
\end{proposition}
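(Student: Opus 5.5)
The plan is to compute the action $\mathcal{A}(\gamma)=\int_\gamma \alpha^m$ of every time-$1$ periodic orbit of $\alpha^m$ region by region and compare. In the Morse-Bott setting, an orbit through a fixed point $p$ of the time-$1$ map is a section over $S^1$, so $\int_\gamma \alpha^m$ splits into a $\theta$-part (the integral of the fibre Liouville form along the Reeb/Hamiltonian trajectory) and a Hamiltonian part $\int H\,d\alpha$ evaluated at $p$.

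\textbf{Orbits at $\mu=1$.} There $\lambda_{S^*S^n}$ vanishes on the orbit, since the orbit is a constant section in the chart $S^1\times(T^*S^n\setminus T(0))$. Also $-mG_1(1)=0$. So these orbits have action $0$.

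\textbf{Morse region.} At a critical point $p$ the orbit is the constant section, so $\mathcal{A}=mf_1(p)$. By construction this lies in $[\tfrac12 m\epsilon_G\epsilon^2,\tfrac32 m\epsilon_G\epsilon^2]$, which is positive. Near $\partial M$ there are no orbits by Remark \ref{rmk:H_near_boundary}.

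\textbf{Strict Dehn twist region, $\mu_\ell<1$.} Use the chart $S^1\times(T^*S^n\setminus T(0))$ with $\alpha^m=\mu\lambda_{S^*S^n}-mG_1(\mu)\,d\alpha$. An orbit on the level $\mu=\mu_\ell$ has $\int\lambda_{S^*S^n}$ equal to the Reeb length traversed. This is $2\pi$ times the winding determined by $-mG_1'(\mu_\ell)$, and it is $\ge 0$ by the sign conventions. The action is therefore the Legendre-type expression
\[
\mathcal{A}=\mu_\ell\,(-mG_1'(\mu_\ell))\cdot c-mG_1(\mu_\ell),
\]
with $c>0$ a normalization constant fixed by the $2\pi$-periodicity. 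The key observation is convexity: $-G_1$ has increasing derivative and $-G_1(1)=0$, $-G_1'(1)=0$. The function $\mu\mapsto \mu h'(\mu)-h(\mu)$ for $h=-mG_1$ has derivative $\mu h''(\mu)\ge0$ and vanishes at $\mu=1$. That alone gives the wrong direction for $\mu<1$, so the Reeb-length normalization has to be tracked carefully. Checking the signs and normalization is where I expect the main difficulty, and I would first verify it against Seidel's model $\tilde R_1$ near $\mu=0$.

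Since the orbit on $\mu=\mu_\ell$ wraps the Reeb direction in the \emph{opposite} sense to the naive one, the contribution of $\theta$ has sign $-\mu_\ell h'(\mu_\ell)$ against $-h(\mu_\ell)$. I would rewrite the action as
\[
\int_{\mu_\ell}^{1}\big(h'(1)-h'(\mu)\big)\,d\mu \;\;\text{or}\;\; \int_{\mu_\ell}^1 \mu\,h''(\mu)\,d\mu,
\]
which is positive by convexity.

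\textbf{Zero section for even $m$.} Use the chart of Remark \ref{rmk_hamiltonian}. The $S^n$ family has action $H(0)=-mG_1(0)>0$, because $-G_1$ is increasing toward $0$ from below. For odd $m$ there are no orbits there.

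\textbf{Conclusion.} After the size-$\delta$ perturbation, all actions move by $O(\delta)$. Taking $\delta\ll\epsilon_G\epsilon^2$ preserves the strict ordering, so the orbits from the $\mu=1$ locus have the lowest action.
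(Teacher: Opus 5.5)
Your strategy of computing $\int_\gamma\alpha^m$ region by region is the paper's, and your $\mu=1$, Morse-region and zero-section cases are fine. The gap is in the only case with real content, the orbits on $\mu=\mu_\ell\in(0,1)$: you never actually determine the Liouville contribution. Write $h=-mG_1$. Your displayed formula is then $c\,\mu_\ell h'(\mu_\ell)+h(\mu_\ell)$. Its $\theta$-term is negative because $h'<0$ on $(0,1)$, which contradicts your own assertion that the Reeb length traversed is $\ge 0$. You then reverse the sign by declaring, without derivation, that the orbit ``wraps in the opposite sense to the naive one.'' You also offer two expressions joined by ``or'' that are not equal. The first, $\int_{\mu_\ell}^1\bigl(h'(1)-h'(\mu)\bigr)\,d\mu$, equals $h(\mu_\ell)$ and omits the Liouville term altogether. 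Only the second, $\int_{\mu_\ell}^1\mu\,h''(\mu)\,d\mu=h(\mu_\ell)-\mu_\ell h'(\mu_\ell)$, is the action. As written, positivity of the twist-region actions rests on an unjustified sign flip.

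The missing step is a one-line computation, and it is exactly how the paper proceeds. The orbit is tangent to the horizontal lift $\partial_\alpha^\#$ of $\partial_\alpha$ with respect to $d(\mu\lambda_{S^*S^n}+h(\mu)\,d\alpha)$. For a vertical vector $V$ one has $\bigl(d\mu\wedge\lambda_{S^*S^n}+\mu\,d\lambda_{S^*S^n}\bigr)(aR,V)=-a\,d\mu(V)$ and $\bigl(h'\,d\mu\wedge d\alpha\bigr)(\partial_\alpha,V)=-h'\,d\mu(V)$. Hence $\partial_\alpha^\#=\partial_\alpha-h'(\mu)R$. It follows that $\int_\gamma\mu\lambda_{S^*S^n}=-\mu_\ell h'(\mu_\ell)$, with no extra normalization constant $c$, and $\mathcal{A}(\gamma)=h(\mu_\ell)-\mu_\ell h'(\mu_\ell)$, which is the paper's $\ell\mu+mf_1(\mu)$. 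Positivity then needs neither convexity nor a check against Seidel's model. Since $h$ is strictly decreasing on $(0,1)$ with $h(1)=0$, both $h(\mu_\ell)$ and $-\mu_\ell h'(\mu_\ell)$ are positive. With this computation inserted and the first ``or'' alternative discarded, your argument coincides with the paper's.
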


\begin{proof}
We examine one by one the actions of periodic orbits in the Morse region and in the Dehn twist region.

For the Morse region, the vertical $1$-form looks like $\lambda ^m =\theta_M + mf_1 dt$. For $\gamma$ a periodic orbit in the Morse region, we have
\[
\mathcal{A}(\gamma) = \int_\gamma mf_1dt 
\]
which is just the value of $mf_1$. The value of $f_1$ is zero at the $\mu=1$ locus and is strictly positive in the Morse region, hence the conclusion holds.

Next, we examine the Dehn twist region. For any periodic orbit $\gamma$ that is over some $\mu\in(0, 1)$ with $f_1'(\mu) = -\ell/m$, the tangent vector of $\gamma$ is the horizontal lift of $\partial_t$, which is 
\[
\partial_t^\# = \partial_t + \ell R_P
\]
where $R_P$ denotes the Reeb vector field of the contact manifold $(S^*S^n, \lambda_{S^*S^n})$. It follows that
\[
\mathcal{A}(\gamma) = \int_\gamma \mu\lambda_{S^*S^n}+mf_1(\mu)dt = \ell\mu + mf_1(\mu) = m(f_1(\mu)-\mu f_1'(\mu))
\]
Finally, suppose $m$ is even and the periodic orbit is over $\mu=0$, then the action is
\[
\mathcal{A}(\gamma) = \int_\gamma \mu \lambda_{S^*S^n} + H(\mu) dt = H(0) = -m\tilde{G}_1(0),
\]
where $H(\mu) = \frac{m\mu}{2} -m\tilde{G}_1(\mu)$.

From this, we see that orbits to the left of $\mu=1$ have strictly positive action.
\end{proof}

With the action estimates in mind, we now prove 
\begin{proof}[Proof of 
Proposition \ref{prop:confinement_compatible_Morse}] Pick a sequence $\delta_{i} >0 $ so that $\delta_i\rightarrow 0$ as $i \to +\infty$, and suppose $u_i$ is a sequence of $J_{\delta_{i}}$-holomorphic sections that violates the confinement. Then, in the $i\to\infty$ limit, $u_i$ converges to a cascade \cite{Bourphd,Yaocas}. There is a main level $u^0:B_3\rightarrow E^{m,n}$ and above the main level are $k_\infty$ different $J$-holomorphic sections $u^{1},u^{2},...,u^{k_\infty}$ of $\mathcal{E}^{m+n}$ that are connected by gradient flows of the Morse functions we use to perturb away the Morse-Bott degeneracy. Corresponding to each puncture of $B_3$ at $z=1$ and $z=0$, we get further cascade levels. At the $z=1$ puncture, we have a sequence $u_{1,1},...,u_{1,k_1}$ that are sections of $\mathcal{E}^n$ and they are connected by Morse flow lines; similarly near the $z_0$ puncture, we have $u_{0,1},...,u_{0,k_0}$ as sections of $\mathcal{E}^m$, which are also connected by Morse flow lines.

Note that each component of the cascade satisfies confinement, and the Morse flowlines that connect different cascade levels also satisfy confinement. Since $u_i$ converges in $C^\infty_{loc}$ to the cascade, the only $u_i$ that can violate confinement has the following configuration:

Some upper level of the cascade is contained in the Morse region (resp. Dehn twist region) but has a negative puncture on the $\mu=1$ locus, and some level below this has a positive puncture on the $\mu=1$ locus and leaves the Morse region (resp. Dehn twist region). Notice that in formulating this statement, we have used the fact that all components of the cascade have one positive puncture.

But using action arguments, we see immediately this is not possible: if a component of the cascade has a positive puncture at $\mu=1$, that component itself has negative punctures also at $\mu=1$ since $\mu=1$ has the lowest action of all orbits. This section must then be horizontal. From confinement in the Morse-Bott setting, we see that this component intersects neither the strict Dehn twist region nor the strict Morse region.
\end{proof}

Finally, we show that for tame almost complex structures sufficiently close to a compatible almost complex structure as above, all the relevant holomorphic curves satisfy confinement. We assume $\delta$ is chosen small enough so that all the periodic orbits that arise from perturbing the $\mu=1$ Morse-Bott locus still have the lowest action compared to all other orbits and that any curve (be it cylinder or pair of pants) that has a positive puncture at the orbits corresponding to the $\mu=1$ locus is entirely confined to the intersection region.

\begin{theorem} \label{thm:tame_to_compatible}
Suppose $J_\delta$ is a compatible almost complex structure as constructed in Proposition \ref{prop:confinement_compatible_Morse} for which confinement of $J_\delta$- holomorphic curves is achieved. We assume $\delta$ is small enough as above. For all sufficiently close tame almost complex structures $J'$, the $J'$-holomorphic curves also satisfy confinement.
\end{theorem}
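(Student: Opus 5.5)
We argue by contradiction and compactness, in the same spirit as the proof of Proposition \ref{prop:confinement_compatible_Morse}. Fix $J_\delta$ as in the statement and take a sequence of tame almost complex structures $J'_i \to J_\delta$ in $C^\infty$. Each $J'_i$ satisfies the conditions of Definition \ref{defn:tame_for_cobordism}: it is compatible (of confinement type) on the cylindrical ends and satisfies the boundary condition near $\partial M$. We may also assume the $J'_i$ agree with $J_\delta$ outside a fixed compact subset of the interior of $B_3$. Suppose that for each $i$ there is a $J'_i$-holomorphic section $u_i \in \mathcal{M}^{J'_i}(x;y_1,y_2)$ that violates one of the three confinement conditions of Theorem \ref{thm:product_confinement}. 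Since there are finitely many triples of orbits, we may pass to a subsequence and fix $x,y_1,y_2$.

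\textbf{Step 1: compactness.}
The Hofer energy of $u_i$ is bounded uniformly by exactness of $\Omega_{m,n}$. Proposition \ref{prop:max_principle_B3} keeps each $u_i$ away from $\partial M$: near $\partial M$ the $J'_i$ satisfy condition (4), and the harmonicity argument there only uses the fibration structure near the boundary, so it survives tame perturbations supported in the interior. Sphere bubbling is excluded by exactness. SFT compactness therefore gives a subsequence converging to a $J_\delta$-holomorphic building. It consists of a main level $u^0$, which is a section of $E^{m,n}_\delta$, together with $J_\delta$-holomorphic cylinder levels in $\mathbb{R}\times\mathcal{E}^{m},\mathbb{R}\times\mathcal{E}^{n},\mathbb{R}\times\mathcal{E}^{m+n}$ attached at the three punctures. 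Every component has exactly one positive puncture.

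\textbf{Step 2: confinement of the components.}
The main level satisfies confinement by Proposition \ref{prop:confinement_compatible_Morse}. The cylinder levels satisfy confinement by Theorem \ref{thm:confinement_differential}, applied on the ends where $J'_i=J_\delta$ is compatible of confinement type. Because the convergence is $C^\infty_{loc}$ with matching asymptotic orbits, $u_i$ can violate confinement for large $i$ only in one way. Some level $v$ must lie in one region, say the Morse region (the Dehn twist case is symmetric), and have a negative end at an orbit $\gamma$ in the intersection region. A lower level $w$ with positive end at $\gamma$ must then enter the strict Dehn twist region, or more generally violate the required containment. This is exactly the configuration isolated in the proof of Proposition \ref{prop:confinement_compatible_Morse}.

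\textbf{Step 3: the action argument rules this out.}
By hypothesis $\delta$ is chosen so that the perturbed $\mu=1$ orbits have the lowest action, and any curve whose positive puncture is at such an orbit is confined to the intersection region. The level $w$ is $J_\delta$-holomorphic and has its positive puncture at $\gamma$, so $w$ lies in the intersection region. This contradicts Step 2. The remaining case is when the offending level is the main level itself, whose output $x$ is a $\mu=1$ orbit. This is handled by the same assumption applied to pairs of pants, which is stated explicitly in the choice of $\delta$.

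The main obstacle is ensuring that the ingredients of the compactness and confinement arguments, namely the vanishing of curvature, the maximum principle and the local energy inequalities, are used only for the limit $J_\delta$ and never for the tame $J'_i$. That is the reason for arguing by compactness rather than proving the inequalities directly for $J'$. The other point to check is that tame perturbations do not create horizontal or non-regular limit components that escape the action argument. This is controlled because the energy of every limit component is still given by the action difference for $J_\delta$.
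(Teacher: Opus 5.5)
Your proposal is correct and follows essentially the same route as the paper. You argue by contradiction, extract an SFT limit building whose levels are $J_\delta$-holomorphic and each individually confined, and rule out the only possible failure (an interface at a perturbed $\mu=1$ orbit) using the lowest-action property and the standing assumption on $\delta$. Your extra normalization that $J'_i=J_\delta$ on the ends is not needed, since the symplectization levels of the limit are $J_\delta$-holomorphic regardless, but it is harmless.
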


\begin{proof}
Suppose not: then we have a sequence of tame almost complex structures $J_{i}'\rightarrow J$, and a sequence of $J_{i}'$-holomorphic sections $u_i$ that violate confinement, converging to an SFT building $u_\infty$ \cite{SFT_compactness}. However, since each level of $u_\infty$ satisfies confinement as above, then the only possibility is if a level of $u_\infty$ is a curve $v$ which has a negative puncture on an orbit coming from the $\mu=1$ locus: but then all levels below the curve $v$ must stay in the intersection region for energy reasons. This means confinement holds for the entire building $u_\infty$ and hence for $u_i$ for all large enough $i$.
\end{proof}
\subsection{Product for wrapped Dehn twists}
We give a sketch of how to define the appropriate product structure for fixed point Floer cohomology of wrapped Dehn twists. The main objective is to define it so that the curves enumerated by the product with ends in the twist region remain completely confined to the twist region. The details will appear in forthcoming work.

If both $m,n$ are even, then the product on $HF^*(\phi^m_\infty)\otimes HF^*(\phi_\infty^n)\rightarrow HF^*(\phi^{m+n}_\infty)$ is just the product in symplectic cohomology, which is described in detail in \cite{ganatra_thesis}.

In the general case, let $Y_{\phi_\infty^1}\rightarrow S^1$ denote the stable Hamiltonian structure constructed for $\phi^\infty_1$. We assume $\phi$ and the Hamiltonian $H$ are both time independent\footnote{In the Dehn twist region the stable Hamiltonian structure is given by $G_1(\mu)$ without any time-dependent perturbation.}, so the periodic orbits are Morse-Bott. Let $g_{m,n}:B_3\rightarrow S^1$ be the same map described before, and consider the pullback of $Y_{\phi_\infty^1}\rightarrow S^1$ under this map, which we denote by $E^{m,n}_\infty$. We choose a compatible almost complex structure on $E^{m,n}_\infty$ of confinement type near the Dehn twist region, and choose it to be $c$-rescaled contact type near the wrapping region (i.e. we specify it is $c$-rescaled contact type on the fiber, then lift it to the symplectic fibration). Here, the $c$ is fixed by the function $g_{m,n}$.

Using the rescaling functions described in \cite{JYZ} (see also \cite{ganatra_thesis}), the stable Hamiltonian structures near the punctures of $B_3$ of $E_\infty^{m,n}$ can be identified with the fixed point Floer cohomology of $\phi_\infty^n,\phi_\infty^m,\phi_\infty^{m+n}$ respectively. 

The setup is still completely Morse-Bott, and since we have defined it by pullback, the curvature of this fibration is zero. Hence Lemma \ref{lem:confine_product_all_twist} shows that curves in the Dehn twist region are confined to that region. We then perturb the almost complex structure to be tame and perturb the symplectomorphism by a time-dependent Hamiltonian to break the Morse-Bott degeneracy and achieve the necessary transversality. For small enough such perturbations, one shows as in Theorem \ref{thm:tame_to_compatible} that confinement still holds for curves in the Dehn twist region, and one defines the product by counts of pairs of pants in the perturbed fibration. To show this product is well defined after the perturbation, we need to use the dissipative estimates in \cite{ganatra_thesis} to prevent curves from escaping to infinity, which we leave for future work.

\section{Computation: Dehn twists in dimension $4$}

We finish with an example in dimension $4$, demonstrating how our confinement theorems yield concrete computational results:

\begin{proposition}
Let $(M^4,d \theta_M)$ be a $4$-dimensional Liouville domain and let $\phi$ be a Dehn twist around a Lagrangian $2$-sphere $L$ in $M$. Then for $n\geq1$, we have an isomorphism of $\ZZ_2$-modules:
\[
HF^*(M, \phi ^{2n};\ZZ_2) \cong H^*(M\setminus L; \ZZ_2) \oplus \ZZ_{2}^2 \oplus \ZZ_{2}^{4(n-1)}\]
\end{proposition}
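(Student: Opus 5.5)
We use the splitting formula for the differential, $d = d_{Mo}+d_{tw}-d_{int}$ (from Theorem \ref{thm:confinement_differential}). With it, $CF^*(\phi^{2n})$ becomes a pushout of the Morse complex of the strict Morse region, the twist complex, and the intersection complex. Concretely, we get a short exact sequence of cochain complexes
\[
0\to CF^*_{int}\to CF^*_{Mo}\oplus CF^*_{tw}\to CF^*(\phi^{2n})\to 0,
\]
or equivalently a Mayer--Vietoris type long exact sequence. This works because $CF_{int}$ is a subcomplex of both pieces, and the cross terms vanish by confinement. Each piece is then computed separately.

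\textbf{The Morse and intersection pieces.} After perturbing the Morse--Bott locus $\mu=1$ (an $S^*S^2\times S^1$ family, i.e. $S^1$-fibres over $Q_2=S^*S^2/S^1$), the complex $CF^*_{Mo}$ is the Morse complex of $f_1$ on $M_{\mu\ge 1-\epsilon}$. Its minimum set lies along $\mu=1$, and the function increases outward. So its cohomology is $H^*(M_{\mu\ge1-\epsilon})\cong H^*(M\setminus L)$, up to the relative/absolute convention dictated by the boundary behaviour of Remark \ref{rmk:H_near_boundary}. The intersection complex computes $H^*(S^*S^2)\cong H^*(\mathbb{RP}^3;\ZZ_2)$, which has rank $4$ over $\ZZ_2$.

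\textbf{The twist piece.} By Remark \ref{rmk_hamiltonian} (with $k=2n$ even), the twist region is $S^1\times D^*_{\mu\le 1+\epsilon}S^2$ with Hamiltonian $H(\mu)$. Its generators are:
\begin{itemize}
\item a perfect Morse function on $S^2$ at $\mu=0$, giving $2$ generators;
\item for each $0<\ell< 2n$, the locus $\mu_\ell$, perturbed via $f_Q$ on $Q_2\cong S^2$ and then via $h(t)$, giving $2\cdot 2=4$ generators per $\ell$;
\item the $\ell=2n$ level (or the $\mu=1$ level, depending on indexing), which coincides with the intersection generators.
\end{itemize}
Altogether there are $2+4(n-1)$ generators in the strict twist region plus $4$ in the intersection region. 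By Lemma \ref{lem:confinement_differential_twist_region}, $d_{tw}$ is the restriction of the differential of $CF^*(\phi^{2n}_\infty)\cong SC^*(T^*S^2)$, truncated at an action window. I would then invoke the Morse--Bott spectral sequence of Diogo--Lisi \cite{Lisi_Diogo_complement}, applied to the Hamiltonian on $T^*S^2$, to compute the twist piece.

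\textbf{Main obstacle.} The key claim is that, with $\ZZ_2$ coefficients, the $d_{tw}$ components between distinct Morse--Bott levels vanish, and the within-level differentials vanish as well. Within-level, $S^2$, $Q_2$ and the $S^1$ circle pieces all have perfect $\ZZ_2$ Morse functions. Between levels, contributions come in pairs, via the antipodal/$S^1$ symmetry of geodesics on the round $S^2$ counted mod $2$. I would get the between-level vanishing in one of two ways: from the grading, since in dimension 4 consecutive levels are separated by Conley--Zehnder index $2$, which exceeds what an index-one cylinder can bridge; or by comparing with Diogo--Lisi's computation, which shows that $SH^*(T^*S^2;\ZZ_2)\cong H_*(\mathcal L S^2;\ZZ_2)$ has rank equal to the number of generators in each action window. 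Either way, all generators in the twist region survive.

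Granting this, the Mayer--Vietoris sequence degenerates: the map $CF_{int}\to CF_{tw}$ is injective on cohomology. This gives the total rank
\[
\mathrm{rk}\,H^*(M\setminus L)+(2+4(n-1)+4)-4,
\]
which is the stated answer. Checking that the connecting maps vanish follows from the same action filtration argument, which applies because the intersection orbits have the lowest action.
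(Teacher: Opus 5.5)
Your overall route is the paper's: split $d$ by confinement, kill $d_{tw}$ over $\mathbb{Z}_2$ by comparison with Diogo--Lisi, and count generators. However, several steps as you wrote them are wrong, and two of them would fail.

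\textbf{Option (a) does not work.} For $k=2n$, every level $\ell\ge 1$ is a copy of $S^*S^2\cong\mathbb{RP}^3$ and contributes four generators in four \emph{consecutive} degrees. Adjacent levels are shifted by only $2$. In loop-space grading, level $\ell$ occupies degrees $2\ell-1,\dots,2\ell+2$, and the zero section occupies degrees $0$ and $2$. The degree windows of neighbouring levels therefore overlap. There are many pairs of generators in adjacent levels whose degrees differ by one, so grading cannot rule out index-one cylinders between levels. The vanishing is in fact a genuinely mod-$2$ phenomenon: the paper remarks that the curves in the twist region come in pairs and that the complex is richer over $\mathbb{Z}$. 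So no index argument can produce it.

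Only option (b) is valid, and it must be made degree by degree. In each degree, the number of generators of the Diogo--Lisi-type model equals $\dim SH^*(T^*S^2;\mathbb{Z}_2)$ in that degree (ranks $1,1,2,2,2,\dots$ in loop-space grading). This forces the full differential, and hence its action-truncation to the twist region, to vanish. This is precisely the paper's argument.

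\textbf{Your short exact sequence runs the wrong way.} With the paper's conventions $d$ raises action, and the $\mu=1$ orbits have the lowest action. So nothing flows \emph{into} the intersection generators. But $d_{Mo}$ can send them to strict Morse generators, via Morse flow lines leaving the minimum locus $\mu=1$. This genuinely occurs whenever $H^*(M\setminus L;\mathbb{Z}_2)\to H^*(S^*S^2;\mathbb{Z}_2)$ is not onto. For example, for either sphere in the $A_2$ Milnor fibre, the meridian of $L$ bounds a disc in $M\setminus L$, so this map vanishes on $H^1$.

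Hence $CF^*_{int}$ is a quotient complex, not a subcomplex. The correct sequence is $0\to CF^*(\phi^{2n})\to CF^*_{Mo}\oplus CF^*_{tw}\to CF^*_{int}\to 0$, with projections followed by the difference map. What you then need is surjectivity of $H(CF_{tw})\to H(CF_{int})$, not injectivity. A simpler route, as in the paper: once $d_{tw}=0$, confinement shows that every curve with an end at a strict twist generator is counted by $d_{tw}$. So those generators split off with zero differential, and $CF^*(\phi^{2n})\cong CF^*_{Mo}\oplus\mathbb{Z}_2^{2+4(n-1)}$ as complexes.

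\textbf{Your level count is inconsistent.} $H'(\mu)=k/2-kG_1'(\mu)$ rises from $0$ at $\mu=0$ to $k/2=n$ at $\mu=1$. So the strict intermediate levels are $0<\ell<n$. Your list ``$0<\ell<2n$'' would give $4(2n-1)$ generators, not the $4(n-1)$ you then state.
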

where $H^{\ast}(M \setminus L; \ZZ_2)$ denotes the Morse cohomology of the complement of the Lagrangian sphere.
\begin{proof}
By Remark \ref{rmk_hamiltonian}, for even iterations of Dehn twists, the Dehn twist region can be identified with a finite sector of symplectic cohomology of $T^*S^2$ as described in \cite{Lisi_Diogo_complement}. In other words, the differential in the twist region is computing a finite sector of symplectic cohomology with a Hamiltonian that grows quadratically at infinity. We may slightly perturb our Hamiltonian so that it belongs to the class used by \cite{Lisi_Diogo_complement}. We then need to perturb the Hamiltonian in a time-dependent way to achieve nondegeneracy, and we may use the same perturbation in \cite{Lisi_Diogo_complement}. We note that with $\ZZ_2$ coefficients, for each grading degree there are as many generators of the cochain complex in that degree as the rank of the symplectic cohomology group, which means that for $\ZZ_2$ coefficients the differential must vanish in the Dehn twist region. We include the generators at the $\mu=1$ locus as part of the Morse region, and near the boundary the collar neighborhood takes the form $(\epsilon_b/2,\epsilon_b]_\mu\times P$: if we take $f_1=\theta \mu$ for $1\gg\theta>0$, this will ensure we are computing the  \emph{cohomology} of $M\setminus L$.

We have two generators on the zero section, and for each two twists we get four extra generators from Morse-Bott type perturbations; hence, by counting the generators we obtain the claimed result.
\end{proof}

Computations of the product structure in more general settings will be the focus of future work.

% \nocite{*}
\bibliography{references}{}
\bibliographystyle{amsalpha}

\end{document}